\newtheorem{Theorem}{Theorem}[section]
\newtheorem{Lemma}[Theorem]{Lemma}
\newtheorem{Proposition}[Theorem]{Proposition}
\theoremstyle{definition}
\newtheorem{Remark}[Theorem]{Remark}
\newtheorem{Example}[Theorem]{Example}
\def\To{\longrightarrow}
\def\reg{\operatorname{reg}}
\def\sdeg{\operatorname{sdeg}}
\def\depth{\operatorname{depth}}
\def\bideg{\operatorname{bideg}}
\def\height{\operatorname{ht}}
\def\mm{{\frak m}}
\def\M{{\mathcal M}}
\def\ZZ{{\mathbb Z}}
\def\CC{{\mathbb C}}
\def\QQ{{\mathbb Q}}
\begin{document}
\title{Regularity functions of powers of graded ideals}

\author{ Le Tuan Hoa}
\address{Institute of Mathematics, Vietnam Academy of Science and Technology, 18 Hoang Quoc Viet, 10072 Hanoi, Viet Nam}
\email{lthoa@math.ac.vn}

\author{Hop Dang Nguyen}
\address{Institute of Mathematics, Vietnam Academy of Science and Technology, 18 Hoang Quoc Viet, 10072 Hanoi, Viet Nam}
\email{ndhop@math.ac.vn}

\author{Ngo Viet Trung}
\address{Institute of Mathematics, Vietnam Academy of Science and Technology, 18 Hoang Quoc Viet, 10072 Hanoi, Viet Nam}
\email{nvtrung@math.ac.vn}

\keywords{Castelnuovo-Mumford regularity, partial regularity, saturation degree, powers of ideals,  regularity functions, asymptotically linear, defect sequence.}
\subjclass{Primary 13C05, 13D45; Secondary 14B15}

\begin{abstract}
This paper studies the problem of which sequences of non-negative integers arise as the functions $\reg I^{n-1}/I^n$, $\reg R/I^n$, $\reg I^n$ for an ideal $I$ generated by  forms of degree $d$ in a standard graded algebra $R$. These functions are asymptotically linear with slope $d$. If $\dim R/I = 0$, we give a complete characterization of all numerical functions which arise as the functions $\reg I^{n-1}/I^n$, $\reg R/I^n$  and show that $\reg I^n$ can be any numerical function $f(n) \ge dn$ that weakly decreases until it becomes a linear function with slope $d$. The latter result gives a negative answer to a question of Eisenbud and Ulrich.
If $\dim R/I \ge 1$, we show that $\reg I^{n-1}/I^n$ can be any numerical asymptotically linear function $f(n) \ge dn-1$ with slope $d$ and $\reg R/I^n$ can be any numerical asymptotically linear function $f(n) \ge dn-1$ with slope $d$ that is weakly increasing. Inspired of a recent work of Ein, Ha and Lazarsfeld on non-singular complex projective schemes, we also prove that the function of the saturation degree of $I^n$ is asymptotically linear for an arbitrary graded ideal $I$ and study the behavior of this function. 
\end{abstract}

\maketitle


\section*{Introduction} \smallskip

Let $R$ be a standard graded algebra over a field $k$, that is, an algebra generated by
finitely many forms of degree one. 
Let $M$ be a finitely generated graded $R$-module. The
Castelnuovo-Mumford regularity of $M$ is defined by
$$\reg M := \max\{a(H_\mm^i(M)) + i|\ i \ge 0\},$$
where $H_\mm^i(M)$ is the $i$-th local cohomology module of $M$ with respect to the maximal graded ideal $\mm$ of $R$, and $a(H)$ denotes the maximal non-vanishing degree of a graded Artinian module $H$ with the convention $a(H) = -\infty$ if $H = 0$.
If $R$ is a polynomial ring over a field, 
$$\reg M = \max\{b_i(M)-i|\ i = 0,...,r\},$$
where $b_i(M)$ denotes the maximum degree of the generators of the module $F_i$ in a minimal graded free resolution 
$$0 \longrightarrow F_r \longrightarrow \cdots \longrightarrow F_0 \longrightarrow M \longrightarrow 0$$
of $M$. As shown by the seminal works of Eisenbud and Goto \cite{EG} and Bayer and Mumford \cite{BM}, $\reg M$ controls the complexity of the graded structure of $M$. 

In 1991 Betram, Ein, and Lazarsfeld \cite{BEL} proved that if $I$ is a homogeneous ideal whose zero-locus defines a nonsingular complex projective scheme, then $\reg I^n$ is bounded above  by a linear function whose slope is the maximal generating degree $d(I)$ of $I$. 
This result has initiated the study on the function $\reg I^n$.  Later on, Cutkosky, Herzog, Trung \cite{CHT} and Kodiyalam \cite{Ko} independently found out that if $I$ is an arbitrary homogeneous ideal of a polynomial ring $R$, then $\reg I^n$ is an asymptotically linear function, i.e. there are integers $d$, $e$ and $n_0$ such that 
$$\reg I^n = dn  + e \ \text{for\ } n \ge n_0.$$
This result has been extended to the case $I$ is a graded ideal in a standard graded algebra $R$ by Trung and Wang \cite{TW}. The slope $d$ is called the {\it asymptotic degree} of $I$. It is the
smallest number $d$ such that $I^n = I_{\le d}I^{n-1}$ for large $n$, where $I_{\le d}$ denotes
the ideal generated by the elements of $I$ having degree at most $d$. 
However, it is very difficult to determine the intercept $e$ and the smallest number $n_0$ \cite{BHT,Ch1,Ch2,EH,EU,Ha,Ho1,Ro}. 

This paper addresses the behavior of the function $\reg I^n$ for $n <  n_0$. This problem was studied first by Eisenbud and Harris in 2010 \cite{EH}. They considered the sequence $e_n := \reg I^n - dn$, $n \ge 1$, which equals $e$ for $n \ge n_0$. If  $\dim R/I = 0$, and $I$ is {\it equigenerated}, i.e. $I$ is generated in a single degree, they proved that $\{e_n\}$ is a weakly decreasing sequence of non-negative integers.  
In 2012 Eisenbud and Ulrich \cite{EU} showed that if $H_\mm^0(R) = 0$, then $e_n - e_{n-1} \le d$. As far as we know,  this is the only case where we can say something about the behavior of sequence $\{e_n\}$ for $n < n_0$. 
Even this simple case is not well understood. For instance, Eisenbud and Ulrich \cite[p. 1222]{EU} asked whether the sequence $\{e_n - e_{n+1}\}$ is always weakly decreasing. 
This question has remained open until now.

If $\dim R/I > 0$, the sequence $\{e_n\}$ needs not be weakly decreasing.  For instance, if $R$ is a polynomial ring, Sturmfels \cite{St} found examples with $e_1 = 0 < e_2$. Later, Conca \cite{Co} gave examples with $e_1 = \cdots  = e_n = 0 < e_{n+1}$ for an arbitrary $n$. If $I$ is not equigenerated, Berlekamp \cite{Be} showed that the sequence $\{e_n\}$ can be initially increasing then later decreasing.  The above facts indicate that it would be difficult to describe the behavior of the sequence $\{e_n\}$.

Following \cite{Be} we call  $\{e_n\}$ the {\em defect sequence} of the function $\reg I^n$.
In both papers \cite{EH,EU}, the defect sequence of the function $\reg I^n$ was studied by means of the function $\reg R/I^n$.
Our idea is to estimate the behavior of the function $\reg R/I^n$ from that of the function $\reg I^{n-1}/I^n$,
which is easier to study because $I^{n-1}/I^n$ is a component of the associated graded ring of $I$. 

{\it Throughout this paper we assume that $R$ is a standard graded algebra over a field $k$ and $I$ is a graded proper ideal of $R$ for which $I^n \neq 0$ for $n \ge 1$}. 

It is easy to see that $\reg R/I^n$ and $\reg I^{n-1}/I^n$ are asymptotically linear functions. More precisely,
$\reg R/I^n = \reg I^{n-1}/I^n = dn + e -1$ for $n \gg 0$, where $d$ and $e$ are the slope and intercept of the asymptotically linear function $\reg I^n$. 
Set $a_n = \reg R/I^n - dn + 1$ and $c_n = \reg I^{n-1}/I^n - dn + 1$ for all $n \ge 1$.
We call $\{a_n\}$ and $\{c_n\}$ the {\em defect sequence} of the functions $\reg R/I^n$ and $\reg I^{n-1}/I^n$, respectively. It can be shown that the numbers $a_n$ and $e_n$ are 
non-negative integers if $I$ is an equigenerated ideal. The same holds for $c_n$ if $\height I > 0$.

Our first main results completely characterize the defect sequence of the function $\reg I^{n-1}/I^n$. \medskip

\noindent {\bf Theorems \ref{characterization1} and  \ref{ubiquity1}}.
{\it A sequence of non-negative integers is the defect sequence of the function $\reg I^{n-1}/I^n$ of an equigenerated ideal $I$ with $\dim R/I =0$ resp.~$\dim R/I \ge 1$ if and only if it is weakly decreasing resp.~convergent.}
\medskip

Consequently, any numerical asymptotically linear function $f(n)$ with slope $d$ is the function $\reg I^{n-1}/I^n$ of an ideal $I$ generated by forms of degree $d$ if and only if $f(n) \ge dn-1$.

For the function $\reg R/I^n$ we also have a complete characterization of the defect sequence in the case $\dim R/I = 0$.
\medskip

\noindent {\bf Theorem \ref{characterization2}}.
{\it A sequence of non-negative integers $\{a_n\}$ is the defect sequence of the function $\reg R/I^n$ of 
an ideal $I$ generated by forms of degree $d$ with $\dim R/I = 0$ if and only if it is weakly decreasing and
$a_n - a_{n+1} \le d$ for all $n \ge 1$.}
\medskip

We do not have a complete characterization of the defect sequence of the function $\reg R/I^n$ in the case $\dim R/I \ge 1$. However, we have the following simple sufficient condition, which shows that this defect sequence can fluctuate arbitrarily.
\medskip

\noindent {\bf Theorem \ref{ubiquity2}}.
{\it The defect sequence of the function $\reg R/I^n$ of an ideal $I$ generated by forms of degree $d$ with $\dim R/I \ge 1$ can be any convergent sequence of non-negative integers $\{a_n\}$ with the property $a_n - a_{n+1} \le d$ for all $n \ge 1$.}
\medskip

Consequently, the function $\reg R/I^n$ of an ideal $I$ generated by forms of degree $d$ can be any numerical asymptotically linear function $f(n) \ge dn-1$ with slope $d$ that is weakly increasing.
For instance, $\reg R/I^n$ can be arbitrarily larger than $n \reg R/I$ at any $n$.
This differs strikingly to the case $R$ is a polynomial ring, where it is difficult to find examples with $\reg I^n > n\reg I$ for some $n$ \cite{Co,St}. Note that we always have $\reg R/I^n = \reg I^n - 1$ if $I$ is a polynomial ideal.

For the defect sequence of the function $\reg I^n$ we obtain the following sufficient condition.
\medskip

\noindent {\bf Theorem \ref{ubiquity3}}.
{\it The defect sequence of the function $\reg I^n$ of an ideal $I$ generated by forms of degree $d$ with $\dim R/I = 0$ can be any weakly decreasing sequence $\{e_n\}$ of non-negative integers with the property $e_n - e_{n+1} \ge d$ for $n < m$, where $m$ is the least integer such that $e_n = e_{n+1}$ for all $n \ge m$.}
\medskip

In other words, the function $\reg I^n$ of  an ideal $I$ generated by forms of degree $d$ with $\dim R/I = 0$ can be any numerical function $f(n) \ge dn$ that weakly decreases first and then becomes a linear function with slope $d$. This statement also holds for $\dim R/I \ge 1$ by considering polynomial rings over $R$.
 
By Theorem \ref{ubiquity3}, the sequence $\{e_n - e_{n+1}\}$ can fluctuate or even increase before converging to zero. So we obtain a negative answer to the aforementioned question of Eisenbud and Ulrich.
There are reasons to conjecture that a sequence of non-negative integers $\{e_n\}$ is the defect sequence of the function $\reg I^n$ of an equigenerated ideal $I$ with $\dim R/I =0$ resp.~$\dim R/I \ge 1$ if and only if it is weakly decreasing resp.~convergent. 

The main difficulty in the proofs of the above theorems lies in the constructions of ideals with given defect sequences. Our idea is to choose $R$ of the form $k[X,Y]/Q$ and $I = (Y)^d+Q/Q$, where $X,Y$ are two sets of variables and $Q$ is a homogeneous ideal. Since the ideals $I^n$ is well determined, we only need to manipulate $Q$ to get the desired functions $\reg I^{n-1}/I^n$, $\reg R/I^n$, $\reg I^n$. 
These constructions cannot be applied to the case $R$ is a polynomial ring. 
In this case, there may be other constraints on the defect sequences. For instances, if $\dim R/I = 0$, the functions $\reg R/I^n$ and $\reg I^n$ must be strictly increasing by a result of Berlekamp \cite[Corollary 2.2]{Be}, whereas it needs not be so for non-polynomial rings by Theorems \ref{characterization2} and \ref{ubiquity3}.  

Another invariant related to the regularity is the {\it saturation degree} $\sdeg I$, which is the least number $t$ such that the saturation $\widetilde I := \cup_{n \ge 0} I:\mm^n$ coincides with $I$ in degree $\ge t$. We have $\sdeg I \le \reg R/I+1$ and equality holds if $\dim R/I = 0$. Recently, Ein, H\`a and Lazarsfeld \cite{EHL} proved that if $I$ is a homogeneous ideal whose zero-locus defines a non-singular complex projective scheme, then $\sdeg I^n$ is bounded above by a linear function whose slope is the maximal generating degree $d(I)$ of $I$. 
Inspired of their result, we prove that $\sdeg I^n$ is always asymptotically a linear function whose slope is $\le d(I)$ for any graded ideal $I$ in any standard graded algebra $R$. \medskip

\noindent {\bf Theorem \ref{sdeg}}. 
{Let $I$ be an ideal generated by forms of degree $d_1,...,d_p$. Then $\sdeg I^n$ is asymptotically a linear function with slope in $\{0, d_1,...,d_p\}$.}
\medskip

The above theorem follows from a more general result on the {\em partial regularity}
$$\reg_t M := \max\{a(H_\mm^i(M))+i|\ i \le t\},$$
where $t$ is any given number $\le \dim M$. If $R$ is a polynomial ring in $m$ variables, this notion is strongly related to the extremal Betti numbers introduced by Bayer, Charalambous and Popescu \cite{BCP}:
$$l\text{--}\reg M := \max\{b_i(M)-i|\ i \ge l\},$$
where $l$ is any given number $\le m$. By \cite{Tr2}, $\reg_t M = (m-t)$--$\reg M$.
We show that $\reg_t I^n$ is asymptotically a linear function with slope in $\{0,d_1,...,d_p\}$. 
If $\sdeg I^n$ is not asymptotically a constant function, we have $\sdeg I^n = \reg_1 I^n$ for $n \gg 0$.

If $I$ is an ideal generated by forms of degree $d$, $\sdeg I^n = dn + e$ for some $e \ge 0$, $n \gg 0$. 
We call the numbers $b_n := \sdeg I^n - dn$ the {\em defect sequence} of the function $\sdeg I^n$. 
If $\dim R/I = 0$, $\sdeg I^n = \reg R/I^n+1$. Hence Theorem \ref{characterization1} gives a complete characterization of the defect sequence of the function $\sdeg I^n$. If $\dim R/I = 1$, we obtain the following 
simple sufficient condition which shows that this sequence may fluctuate arbitrarily.
\medskip

\noindent {\bf Theorem \ref{sdeg2}}.
{\it The defect sequence of the function $\sdeg I^n$ of an ideal $I$ generated by forms of degree $d$ with $\dim R/I \ge 1$ can be any convergent sequence of non-negative integers $\{b_n\}$ with the property $b_n - b_{n+1} \le d$ for all $n \ge 1$.}
\medskip

Consequently, the function $\sdeg I^n$ of  an ideal $I$ generated by forms of degree $d$ can be any numerical asymptotically linear function $f(n) \ge dn$ with slope $d$ that is weakly increasing. 

The next sections of this paper will deal with the functions $\reg I^{n-1}/I^n$, $\reg R/I^n$, $\reg I^n$ and $\sdeg I^n$ separately in this order.


\section{The function $\reg I^{n-1}/I^n$}

In order to compare the functions $\reg I^n$ and $\reg I^{n-1}/I^n$ we need the following facts on the behavior of the Castelnuovo-Mumford regularity in a short exact sequence.

\begin{Lemma} \label{comparison1} {\rm \cite[Lemma 3.1(iii)]{HT}}
Let $0 \to N \to M \to E \to 0$ be a short exact sequence of finitely generated graded $R$-modules. 
Then $\reg E \le \max\{\reg N-1,\reg M\}$. Equality holds if $\reg N \ne \reg M$.
\end{Lemma}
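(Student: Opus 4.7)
The plan is to derive everything from the long exact sequence of local cohomology associated with $0 \to N \to M \to E \to 0$, namely
$$\cdots \to H_\mm^{i-1}(E) \to H_\mm^i(N) \to H_\mm^i(M) \to H_\mm^i(E) \to H_\mm^{i+1}(N) \to \cdots,$$
and then to read off inequalities on the top nonvanishing degrees $a(-)$ of each piece.

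First I would prove the upper bound. From the three-term exact fragment $H_\mm^i(M) \to H_\mm^i(E) \to H_\mm^{i+1}(N)$ one has, in each degree, that an element of $H_\mm^i(E)$ either lifts to $H_\mm^i(M)$ or maps nontrivially to $H_\mm^{i+1}(N)$, so
$$a(H_\mm^i(E)) \le \max\{a(H_\mm^i(M)),\, a(H_\mm^{i+1}(N))\}.$$
Adding $i$ and using the definition of regularity gives $a(H_\mm^i(E))+i \le \max\{\reg M,\, \reg N-1\}$, and taking the maximum over $i$ yields $\reg E \le \max\{\reg N-1,\reg M\}$.

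For the equality statement I would split into two cases depending on which of $\reg N-1$ or $\reg M$ realizes the maximum. If $\reg M > \reg N$, pick $i$ with $a(H_\mm^i(M))+i = \reg M$ and let $x$ be a nonzero element of $H_\mm^i(M)$ in degree $\reg M-i$. Since $a(H_\mm^i(N)) \le \reg N - i < \reg M - i$, the module $H_\mm^i(N)$ vanishes in degree $\reg M-i$, so $x$ maps injectively into $H_\mm^i(E)$; this shows $a(H_\mm^i(E))+i \ge \reg M$ and hence $\reg E \ge \reg M$. If instead $\reg N - 1 > \reg M$, pick $j$ with $a(H_\mm^j(N))+j = \reg N$ and a nonzero $y \in H_\mm^j(N)_{\reg N - j}$. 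Since $a(H_\mm^j(M)) \le \reg M - j < \reg N - j$, the image of $y$ in $H_\mm^j(M)$ vanishes, so $y$ lies in the image of the connecting map from $H_\mm^{j-1}(E)$; hence $a(H_\mm^{j-1}(E)) \ge \reg N - j$, yielding $\reg E \ge \reg N - 1$. Combined with the upper bound, this gives equality in both cases.

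The argument is essentially a bookkeeping exercise on degrees in a long exact sequence; the only subtle point is setting up the right vanishing hypothesis on the neighboring cohomology so that the chosen nonzero element survives in $H_\mm^\bullet(E)$. Since this is exactly where the assumption $\reg N \ne \reg M$ gets used, I expect the main obstacle (if any) to be writing the case split cleanly rather than any real difficulty.
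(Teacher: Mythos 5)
The paper does not actually prove this lemma --- it is quoted from \cite[Lemma 3.1(iii)]{HT} --- and your long-exact-sequence argument is the standard proof of that statement; it is correct in substance and in all its key steps (the degreewise bound $a(H_\mm^i(E)) \le \max\{a(H_\mm^i(M)), a(H_\mm^{i+1}(N))\}$ for the inequality, and the survival of a top-degree class in $H_\mm^i(E)$ or $H_\mm^{j-1}(E)$ for the equality). One small imprecision: your two cases, $\reg M > \reg N$ and $\reg N - 1 > \reg M$, do not literally cover the boundary case $\reg N = \reg M + 1$; but your second argument only uses $a(H_\mm^j(M)) + j \le \reg M < \reg N$, so it applies verbatim whenever $\reg N > \reg M$, and relabelling the two cases as $\reg M > \reg N$ versus $\reg N > \reg M$ closes this gap.
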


\begin{Proposition} \label{asymptotic1}
Let $I$ be an arbitrary graded ideal. 
Then $\reg I^{n-1}/I^n = dn + e-1$ for $n \gg 0$, where $d$ and $e$ are the slope and intercept of the function 
$\reg I^n$ for $n \gg 0$.
\end{Proposition}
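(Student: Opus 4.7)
The plan is to apply Lemma \ref{comparison1} to the short exact sequence
$$0 \to I^n \to I^{n-1} \to I^{n-1}/I^n \to 0.$$
This immediately yields the upper bound
$$\reg(I^{n-1}/I^n) \le \max\{\reg I^n - 1,\ \reg I^{n-1}\}.$$
For $n \gg 0$ the right-hand side equals $\max\{dn+e-1,\ dn+e-d\} = dn+e-1$, since $d \ge 1$ (recall $I$ is a proper graded ideal with $I^n \ne 0$, so its asymptotic degree is positive).

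For the matching lower bound, I first dispatch the easy case $d \ge 2$: the two terms in the maximum above are then distinct, so the equality clause of Lemma \ref{comparison1} gives $\reg(I^{n-1}/I^n) = \reg I^n - 1 = dn+e-1$ for free.

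The delicate case is $d = 1$, where the two terms coincide and Lemma \ref{comparison1} only delivers the upper bound. Here I would pass to local cohomology. Set $r_n = \reg I^n = dn+e$ for $n \gg 0$. Since $H^0_\mm(I^n) \subseteq H^0_\mm(R)$ and the latter is graded Artinian, $a(H^0_\mm(I^n))$ is uniformly bounded by $a(H^0_\mm(R))$; as $r_n \to \infty$, for $n \gg 0$ some index $i \ge 1$ must satisfy $a(H^i_\mm(I^n))+i = r_n$. Fixing such an $i$ and inspecting the long exact sequence
$$H^{i-1}_\mm(I^{n-1}/I^n)_{r_n-i} \To H^i_\mm(I^n)_{r_n-i} \To H^i_\mm(I^{n-1})_{r_n-i},$$
the middle term is nonzero by the choice of $i$, while the right-hand term vanishes: from $a(H^i_\mm(I^{n-1})) + i \le \reg I^{n-1} = r_n - d < r_n$ we get $a(H^i_\mm(I^{n-1})) < r_n - i$. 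Exactness then forces $H^{i-1}_\mm(I^{n-1}/I^n)_{r_n-i} \ne 0$, so $\reg(I^{n-1}/I^n) \ge (r_n - i)+(i-1) = dn+e-1$, matching the upper bound.

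The main obstacle is the borderline case $d = 1$, because Lemma \ref{comparison1} degenerates there and gives nothing for the lower bound. The local cohomology detour handles it, and what makes the detour work is the observation that the regularity of $I^n$ cannot asymptotically be witnessed by $H^0_\mm$; without this, one could not ensure an index $i \ge 1$ at which to run the exact sequence argument.
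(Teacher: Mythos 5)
Your argument is correct, but the case split at $d=1$ is unnecessary, and it stems from a misreading of the equality clause in Lemma \ref{comparison1}. That clause is conditioned on $\reg N \ne \reg M$ --- here $\reg I^n \ne \reg I^{n-1}$ --- and not on the two terms $\reg N - 1$ and $\reg M$ inside the maximum being distinct. Since $\reg I^n - \reg I^{n-1} = d \ge 1$ for $n \gg 0$, the hypothesis of the equality clause is satisfied for every $d$, including $d=1$, and one gets
$\reg I^{n-1}/I^n = \max\{\reg I^n - 1, \reg I^{n-1}\} = \max\{dn+e-1,\ dn+e-d\} = dn+e-1$
in one step; this is exactly the paper's proof. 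Your local cohomology detour for $d=1$ is itself correct (the key points --- that $H^0_\mm(I^n) \subseteq H^0_\mm(R)$ forces the regularity of $I^n$ to be witnessed at some cohomological index $i \ge 1$ for $n \gg 0$, and that $H^i_\mm(I^{n-1})$ vanishes in degree $r_n - i$ because $\reg I^{n-1} < r_n$ --- both check out), and in fact it works uniformly for all $d \ge 1$; but it amounts to re-proving the equality clause of Lemma \ref{comparison1} in this special case rather than filling a genuine gap. So the proof stands; only the perceived obstacle at $d=1$ is illusory.
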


\begin{proof}
Since $\reg I^n = dn + e$ for $n \gg 0$,  $\reg I^n = \reg I^{n-1} + d$ for $n \gg 0$.  
Consider the short exact sequence 
$0 \to I^n \to I^{n-1} \to I^{n-1}/I^n \to 0.$
By Lemma \ref{comparison1}, it implies 
$$\reg I^{n-1}/I^n = \reg I^n - 1 = dn+e-1.$$
\end{proof}

As a consequence, the function $\reg I^{n-1}/I^n$ is always an asymptotically linear function.
This function has a further constraint if $I$ is an equigenerated ideal.

\begin{Proposition} \label{non-negative1}
Let $I$ be an ideal generated by forms of degree $d$ with $\height I > 0$.
Then $\reg I^{n-1}/I^n \ge dn -1$ for all $n \ge 1$.
\end{Proposition}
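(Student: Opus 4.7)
The plan is to combine the elementary bound $\reg I^n \ge dn$ (which holds because $I^n$ is generated in degree $dn$) with the short exact sequence
\[
0 \to I^n \to I^{n-1} \to I^{n-1}/I^n \to 0.
\]
The companion of Lemma~\ref{comparison1} bounding the middle term of an SES gives
\[
\reg I^n \le \max\{\reg I^{n-1},\ \reg I^{n-1}/I^n + 1\},
\]
so $\reg I^n \ge dn$ immediately yields a dichotomy: either $\reg I^{n-1}/I^n \ge dn - 1$, which is what we want, or $\reg I^{n-1} \ge dn$.

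To refine the second alternative I would pass to the long exact sequence of local cohomology associated with the same SES. Taking an index $i$ and a top-degree element $x \in H^i_\mm(I^n)$ realizing $\reg I^n = a(H^i_\mm(I^n)) + i$: if the image of $x$ in $H^i_\mm(I^{n-1})$ is zero, the connecting homomorphism lifts $x$ from an element of $H^{i-1}_\mm(I^{n-1}/I^n)$ of the same degree, giving $\reg I^{n-1}/I^n \ge \reg I^n - 1 \ge dn - 1$ directly. Otherwise the image is nonzero, whence $\reg I^{n-1} \ge \reg I^n \ge dn$, and a dual analysis starting from a top element of $H^j_\mm(I^{n-1})$ either closes the case or leaves the single ``resonant'' scenario in which $\reg I^n = \reg I^{n-1}$ and the top local cohomology classes inject compatibly.

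The resonant case is where the hypothesis $\height I > 0$ must intervene. My strategy is to use it, via graded prime avoidance, to choose a form $f \in I_d$ that is non-zero-divisor on a suitable quotient of $R$ and then exhibit an embedding $(R/I)(-d(n-1)) \hookrightarrow I^{n-1}/I^n$ via $1 \mapsto f^{n-1}+I^n$; the key verification is $I^n : f^{n-1} = I$ for a sufficiently generic $f$. Combined with the $n=1$ instance of the proposition, $\reg R/I \ge d-1$, which itself follows from $\reg I \ge d$ and Lemma~\ref{comparison1} applied to $0 \to I \to R \to R/I \to 0$, this embedding transmits regularity via the local cohomology long exact sequence to give
\[
\reg I^{n-1}/I^n \ge \reg R/I + d(n-1) \ge dn - 1.
\]

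The main obstacle is producing the desired form $f \in I_d$: the condition $\height I > 0$ only ensures that $I$ avoids the minimal primes of $R$, not the embedded associated primes, so a direct prime-avoidance argument needs to be supplemented by a reduction (for instance by first factoring out $H^0_\mm(R)$ or by passing to an infinite base field) in order to guarantee both the existence of $f$ and the identity $I^n : f^{n-1} = I$. This is where the technical core of the argument lies.
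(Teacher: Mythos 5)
Your argument does not close. The local cohomology analysis in the first half is, by your own admission, inconclusive (the ``resonant'' case survives), so everything rests on the second half, and there are two independent gaps there. First, the identity $I^n:f^{n-1}=I$ fails in general, even for $I$ equigenerated with $\dim R/I=0$ and $f\in I_d$ completely generic: for $I=(x^4,x^3y,xy^3,y^4)\subset k[x,y]$ one checks $x^2y^2\in (I^2:I)\setminus I$, hence $I^2:f\supseteq I^2:I\supsetneq I$ for \emph{every} $f\in I$. This is the Ratliff--Rush phenomenon; a superficial element only gives $I^n:f=I^{n-1}$ for $n\gg 0$, not for all $n$, so your map $(R/I)(-d(n-1))\to I^{n-1}/I^n$, $1\mapsto f^{n-1}$, need not be injective. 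Second, and more fatally, even granting an embedding $N=(R/I)(-d(n-1))\hookrightarrow M=I^{n-1}/I^n$, the conclusion $\reg M\ge\reg N$ does not follow: regularity is not monotone under passage to submodules (e.g.\ $\reg \mm^{10}=10$ inside $k[x,y]$ of regularity $0$), because the maps $H_\mm^i(N)\to H_\mm^i(M)$ are injective only for $i=0$, and $R/I$ is not Artinian when $\dim R/I>0$. (A smaller point: Lemma~\ref{comparison1} gives $\reg R/I=\max\{\reg I-1,\reg R\}$ only when $\reg I\ne\reg R$, so your derivation of $\reg R/I\ge d-1$ is also incomplete, although that inequality is true by Proposition~\ref{non-negative2}.)

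The paper's proof proceeds quite differently and avoids singling out any element of $I$. When $\dim R/I=0$, the module $I^{n-1}/I^n$ is Artinian, so $r=\reg I^{n-1}/I^n$ is its top nonvanishing degree; then $\mm^{r+1}\cap I^{n-1}\subseteq I^n$, this intersection is nonzero because it contains a power of the non-nilpotent $\mm$-primary ideal $I$, and a nonzero form of degree $r+1$ in it lies in $I^n$, which is generated in degree $dn$, forcing $r+1\ge dn$. When $\dim R/I>0$, one chooses a system of linear parameters $Q$ for $R/I$ which is filter-regular for $I^{n-1}/I^n$, uses \cite[Proposition 1.1]{TW} to get $\reg I^{n-1}/I^n\ge\reg\,(I^{n-1},Q)/(I^n,Q)$, and applies the Artinian case to the ideal $(I,Q)/Q$ of $R/Q$; the hypothesis $\height I>0$ enters only to guarantee that this image ideal is not nilpotent. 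If you want to salvage your strategy, you would need a reduction of this kind rather than a single generic form $f\in I_d$.
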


\begin{proof}
We first prove the case $\dim R/I = 0$. Since $I$ is not nilpotent, $I^{n-1}/I^n \neq 0$. 
Set $r = \reg I^{n-1}/I^n$. 
Since $I^{n-1}/I^n$ is an Artinian module, $r$ is its largest non-vanishing degree.
Thus, $r+1$ is the smallest integer such that $\mm^{r+1} \cap I^{n-1} \subseteq  I^n$.
Since $I$ is an $\mm$-primary ideal,  $\mm^{r+1} \cap I^{n-1}$ contain a power of $I$.
Therefore, $\mm^{r+1} \cap I^{n-1} \neq 0$ because $I$ is not nilpotent. 
Let $f \neq 0$ be a graded element of degree $r+1$ in $\mm^{r+1} \cap I^{n-1}$. 
Since $I^n$ is generated by elements of degree $dn$, the condition $f \in I^n$ implies $r+1 \ge dn$.
Hence $r \ge nd-1$. 

Let $\dim R/I > 0$. 
Without loss of generality we may assume that the base field $k$ is infinite. 
Then we can find a parameter ideal $Q$ in $R$ for $R/I$ that is generated by linear forms.
Let $M =  I^{n-1}/I^n$. Then $\dim M/QM = 0$. Hence there exists $t \ge 1$ such that $\mm^{t+1}M \subseteq QM$.
From this it follows that $\mm^{t+1}M = \mm^tQM$ because $Q$ is generated by linear forms. 
This means $Q$ is an $M$-reduction of $\mm$ (we refer to \cite{TW} for unexplained notions in this proof). By \cite[Lemma 1.2]{TW}, we may assume that $Q$ is generated by an $M$-filter-regular sequence of linear forms. Then we can apply \cite[Proposition 1.1]{TW} to get
$$\reg M \ge a(QM:\mm/QM) = \reg M/QM = \reg I^{n-1}/(QI^{n-1},I^n).$$
Since there exists a surjective map $I^{n-1}/(QI^{n-1},I^n) \to I^{n-1}/(Q\cap I^{n-1},I^n)$,
\begin{align*}
\reg I^{n-1}/(QI^{n-1},I^n) & \ge \reg I^{n-1}/(Q\cap I^{n-1},I^n)\\
& = \reg\, (I^{n-1},Q)/(I^n,Q).
\end{align*}

Consider the ideal $(I,Q)/Q$ in the ring $R/Q$. We have $\dim R/(I,Q) = 0$.
If $\height (I,Q)/Q = 0$, then $(I,Q)$ is contained in a minimal prime of $Q$. This prime ideal must be $\mm$. Hence,  
$\mm = \sqrt{Q}$. Since $Q$ is generated by $\dim R/I$ elements, we get 
$\dim R = \dim R/I$, which is a contradiction to the assumption $\height I > 0$.
Now we can apply the case $\dim R/I = 0$ to the ideal $(I,Q)/Q$ and obtain $\reg\, (I^{n-1},Q)/(I^n,Q) \ge dn-1$. Therefore,
$$\reg I^{n-1}/I^n  \ge \reg I^{n-1}/(QI^{n-1},I^n) \ge dn-1.$$
\end{proof}

Note that if $\dim R/I = 0$, then $\height I > 0$ because $I$ is non-nilpotent. 
We do not know whether Proposition \ref{non-negative1} holds if $\height I = 0$.

Set $c_n = \reg I^{n-1}/I^n - dn+1$ for all $n \ge 1$. 
By Proposition \ref{asymptotic1} and Proposition \ref{non-negative1}, $\{c_n\}$ is a convergent sequence of non-negative integers if $I$ is an equigenerated ideal with $\height I > 0$. 
We call $\{c_n\}$ the defect sequence of the function $\reg I^{n-1}/I^n$.
If $\dim R/I = 0$, there is an additional constraint on this defect sequence.

\begin{Proposition} \label{decreasing}
Let  $I$ be an ideal generated by forms of degree $d$ with $\dim R/I = 0$. Then the defect sequence of the function $\reg I^{n-1}/I^n$ is weakly decreasing.
\end{Proposition}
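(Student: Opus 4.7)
The goal is to prove $c_n \ge c_{n+1}$ for all $n \ge 1$, which after unwinding the definition of the defect sequence amounts to the inequality $\reg I^n/I^{n+1} \le \reg I^{n-1}/I^n + d$. The plan is to establish this degree-wise, exploiting the fact that $\dim R/I = 0$ makes the comparison transparent.

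First, since $I$ annihilates $I^{n-1}/I^n$, each module $I^{n-1}/I^n$ is finitely generated of dimension zero, hence Artinian. Therefore (as already used in the proof of Proposition~\ref{non-negative1}) its Castelnuovo--Mumford regularity coincides with its maximal non-vanishing degree. Setting $r_n := \reg I^{n-1}/I^n$, this translates to the containment $(I^{n-1})_s \subseteq (I^n)_s$ for every $s > r_n$.

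Next, I would use that $I$ is equigenerated in degree $d$. Writing $V := I_d$, we have $I = RV$, so $I^n = V \cdot I^{n-1}$ as submodules of $R$, and in each degree $(I^n)_s = V \cdot (I^{n-1})_{s-d}$. Given a homogeneous $f \in (I^n)_s$ with $s \ge r_n + d + 1$, write $f = \sum v_i h_i$ with $v_i \in V$ and $h_i \in (I^{n-1})_{s-d}$. Since $s - d \ge r_n + 1$, the previous step gives $h_i \in I^n$, whence $f \in V \cdot I^n \subseteq I^{n+1}$. Thus $(I^n/I^{n+1})_s = 0$ for $s > r_n + d$, so $r_{n+1} \le r_n + d$, which rewrites exactly as $c_{n+1} \le c_n$.

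I do not expect any serious obstacle once the two ingredients above are in place. What is genuinely special to $\dim R/I = 0$ is the first step: for positive-dimensional $R/I$, the regularity of $I^{n-1}/I^n$ is no longer its top non-vanishing degree, so one cannot transfer a regularity bound into a membership statement of the form $(I^{n-1})_{\ge r_n+1} \subseteq I^n$, and indeed the sequence $\{c_n\}$ need not be weakly decreasing in that case (as contrasted in Theorem~\ref{characterization1} versus Theorem~\ref{ubiquity1}).
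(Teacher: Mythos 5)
Your proof is correct, and it is essentially the paper's argument in degree-wise language: both proofs rest on the two facts that $\reg I^{n-1}/I^n$ is the top non-vanishing degree of an Artinian module and that equigeneration lets one write the high-degree part of $I^n$ as $I_d$ times the high-degree part of $I^{n-1}$, so that the containment $(I^{n-1})_{>r}\subseteq I^n$ propagates to $(I^n)_{>r+d}\subseteq I^{n+1}$. The paper phrases this via $\mm^{r+1}\cap I^{n-1}=\mm^{r+1-d(n-1)}I^{n-1}$ and multiplies the inclusion by $I$, which is the same computation.
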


\begin{proof}
Set $r = \reg I^{n-1}/I^n$. We have seen in the proof of Proposition \ref{non-negative1} that
 $r+1$ is the smallest integer such that $\mm^{r+1} \cap I^{n-1} \subseteq  I^n$.
Since $I^{n-1}$ is generated by elements of degree $d(n-1)$, $\mm^{r+1} \cap I^{n-1} = \mm^{r+1-d(n-1)}I^{n-1}$. Similarly, $\mm^{r+d+1} \cap I^n = \mm^{r+d+1-dn}I^n$. Multiplying both sides of the relation $\mm^{r+1-d(n-1)}I^{n-1} \subseteq I^n$ with $I$ we obtain
$\mm^{r+1-d(n-1)}I^n \subseteq  I^{n+1}.$
This implies $\reg I^n/I^{n+1} \le r + d = \reg I^{n-1}/I^n + d$. Therefore, 
$$e_{n+1} = \reg I^n/I^{n+1} - d(n+1) +1 \le  \reg I^{n-1}/I^n - dn + 1 = e_n.$$
\end{proof}

It turns out that this additional constraint is exactly the condition for a convergent sequence of non-negative integers to be the defect sequence of the function $\reg I^{n-1}/I^n$  in the case $\dim R/I = 0$. 
To prove that we only need to show that any weakly decreasing sequence of non-negative integers is the defect sequence of the function $\reg I^{n-1}/I^n$ for an ideal generated by forms of degree $d$ with $\dim R/I = 0$. This follows from the following construction, where we consider the function $\reg I^n/I^{n+1}$ for convenience.

\begin{Proposition} \label{construction2}
Let $\{c_n\}_{n \ge 0}$ be any weakly decreasing sequence of positive integers and $d \ge 1$. 
Let $m$ be the minimum integer such that $c_n = c_m$ for $n > m$. Let $S = k[x,y]$ and 
$$Q = (x^{c_0},x^{c_1}y^d, ...,x^{c_m}y^{dm}).$$ 
Let $R = S/Q$ and $I = (y^d,Q)/Q$. Then for all $n \ge 0$,
$$\reg I^n/I^{n+1} = d(n+1)+c_n-2.$$
\end{Proposition}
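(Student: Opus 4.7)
The plan is to identify $I^n/I^{n+1}$ explicitly as a graded module over $S=k[x,y]$, exhibit a monomial $k$-basis, and then read off the regularity by using that the quotient $R/I$ is Artinian, so $\reg$ of any $R/I$-module is just its top nonzero degree.

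First I would describe the ideals. Because $I=(y^d,Q)/Q$ and the generators of $Q$ beyond $x^{c_0}$ are already divisible by $y^d$, we have $(y^d,Q)=(y^d,x^{c_0})$, so $R/I\cong k[x,y]/(y^d,x^{c_0})$ is Artinian; in particular $\dim R/I=0$. A short induction (or the tautological identity $I\cdot I^{n-1}=I^n$ lifted to $S$) gives $I^n=(y^{dn},Q)/Q$ for every $n\ge0$, whence
\[
I^n/I^{n+1}\ \cong\ (y^{dn},Q)/(y^{d(n+1)},Q)
\]
as graded $S$-modules. Being annihilated by $I$, this quotient is a graded $R/I$-module, hence Artinian, so $\reg I^n/I^{n+1}$ equals the largest degree in which it is nonzero.

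Second I would compute a monomial basis of the quotient. A monomial $x^ay^b$ lies in $(y^{dn},Q)$ iff either $b\ge dn$, or $a\ge c_i$ and $b\ge di$ for some $0\le i\le m$; the analogous statement holds for $(y^{d(n+1)},Q)$. Subtracting, the monomials in the quotient are exactly those with $dn\le b\le d(n+1)-1$ and $x^ay^b\notin Q$. The range of $b$ forces any relevant index $i$ with $di\le b$ to satisfy $i\le\min(n,m)$, so the non-membership condition becomes $a<c_i$ for all $0\le i\le \min(n,m)$. Because the sequence $\{c_i\}$ is weakly decreasing, this reduces to the single inequality $a<c_{\min(n,m)}$, and by the choice of $m$ one has $c_{\min(n,m)}=c_n$ in both cases $n\le m$ and $n>m$.

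Third I would read off the regularity. The monomial basis is $\{x^ay^b\mid 0\le a\le c_n-1,\ dn\le b\le d(n+1)-1\}$, whose top total degree is $(c_n-1)+(d(n+1)-1)=d(n+1)+c_n-2$, attained by $x^{c_n-1}y^{d(n+1)-1}$. Combined with the Artinian characterization $\reg M=\max\{a:M_a\ne0\}$, this gives the claimed formula.

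The only delicate step is the combinatorial reduction in the second paragraph, where the weak monotonicity of $\{c_i\}$ collapses the list of inequalities $a<c_0,\dots,a<c_{\min(n,m)}$ to the single inequality $a<c_n$; the rest is bookkeeping. Everything else follows from the fact that regularity of an Artinian graded module is its socle degree, which is why the construction avoids any use of Koszul or resolution machinery.
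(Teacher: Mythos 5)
Your proposal is correct and follows essentially the same route as the paper: both reduce to the identification $I^n/I^{n+1}\cong (y^{dn},Q)/(y^{d(n+1)},Q)$, use the weak monotonicity of $\{c_i\}$ to collapse the membership conditions to the single constraint $a<c_n$, and read off the regularity as the top nonvanishing degree of an Artinian module. The only cosmetic difference is that the paper packages this as the isomorphism with $S/(Q:y^{dn},y^d)(-dn)=S/(x^{c_n},y^d)(-dn)$ and quotes the socle degree of that complete intersection, whereas you enumerate the monomial basis directly.
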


\begin{proof}
We have
$$
I^n/I^{n+1}  = (y^{dn},Q)/(y^{d(n+1)},Q) \cong S/(Q:y^{dn}, y^d)(-dn).
$$
If $n < m$, 
$(Q: y^{dn},y^d) = (x^{c_0},...,x^{c_n},y^d) = (x^{c_n},y^d)$ because $c_0 \ge \cdots \ge c_n$. Thus,
$$\reg I^n/I^{n+1} = \reg S/(x^{c_n},y^d) + dn = d(n+1) + c_n-2.$$
If $n \ge  m$, $(Q: y^{dn},y^d) = (x^{c_0},...,x^{c_m},y^d) = (x^{c_m},y^d).$ Hence
\begin{align*}
\reg I^n/I^{n+1} & = \reg S/(x^{c_m},y^d) + dn = d(n+1) + c_m-2\\
& = d(n+1) + c_n-2.
\end{align*}
\end{proof}

Combining the above propositions we immediately obtain the following characterization for the defect sequence of the function $\reg I^{n-1}/I^n$.

\begin{Theorem} \label{characterization1}
A sequence of non-negative integers is the defect sequence of the function $\reg I^{n-1}/I^n$ for an equigenerated ideal $I$ with $\dim R/I =0$ if and only it is a weakly decreasing sequence. 
\end{Theorem}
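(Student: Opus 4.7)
My plan is to derive Theorem \ref{characterization1} as an immediate corollary of Propositions \ref{non-negative1}, \ref{decreasing}, and \ref{construction2}, with only a mild reindexing needed to reconcile the fact that Proposition \ref{construction2} requires strictly positive integer entries while the theorem allows zeros in the defect sequence. Since all three component propositions do the real work, I expect no genuine obstacle beyond this bookkeeping.

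For the necessity direction, let $I$ be an ideal generated in a single degree $d$ with $\dim R/I = 0$. Since $I$ is not nilpotent, $\height I > 0$, so Proposition \ref{non-negative1} gives $\reg I^{n-1}/I^n \geq dn - 1$, i.e.\ $c_n \geq 0$, for every $n \geq 1$. Proposition \ref{decreasing} then furnishes the weak monotonicity of $\{c_n\}$, completing this direction.

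For the sufficiency direction, let $\{c_n\}_{n \geq 1}$ be an arbitrary weakly decreasing sequence of non-negative integers and fix any $d \geq 1$. Set $c'_n := c_{n+1} + 1$ for $n \geq 0$; this is a weakly decreasing sequence of positive integers, so Proposition \ref{construction2} applies to $\{c'_n\}$ and produces $R = k[x,y]/Q$ together with the equigenerated degree-$d$ ideal $I = (y^d, Q)/Q$. The quotient $R/I = k[x,y]/(Q, y^d)$ is Artinian because $Q$ contains $x^{c'_0}$, so $\dim R/I = 0$ as required. Substituting $n \mapsto n-1$ in the conclusion of Proposition \ref{construction2} yields $\reg I^{n-1}/I^n = dn + c'_{n-1} - 2 = dn + c_n - 1$, so the defect sequence of $\reg I^{n-1}/I^n$ is exactly $\{c_n\}_{n \geq 1}$. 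The $+1$ shift is forced precisely because Proposition \ref{construction2}'s generator list $Q = (x^{c'_0}, x^{c'_1} y^d, \ldots, x^{c'_m} y^{dm})$ requires $c'_0 \geq 1$, since otherwise $Q$ would contain the unit $1$.
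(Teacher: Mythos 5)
Your proposal is correct and follows essentially the same route as the paper, which likewise obtains the theorem by combining Propositions \ref{non-negative1}, \ref{decreasing}, and \ref{construction2}; your explicit $+1$ shift and reindexing correctly reconcile the construction's positive-integer input with the non-negative defect sequence.
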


The idea for the construction of an ideal generated by forms of degree $d$ with a given defect sequence 
comes from the observation that we can pass the investigation on the function $\reg I^n/I^{n+1}$ to the case $d =1$. 

Let $G = \oplus_{n \ge 0}I^n/I^{n+1}$ and $P = \oplus_{n > 0}I^n/I^{n+1}$. Then 
$$I^n/I^{n+1} = (P^n/P^{n+1})(-n(d-1)).$$ 
Thus, the defect sequence of the function $\reg I^n/I^{n+1}$ can be read off from that of $\reg P^n/P^{n+1}$. 
We can be represent $G$ as a quotient ring $k[X,Y]/Q$, where $k[X,Y]$ is a polynomial in two sets of variables, and $P = (Y)+Q/Q$, which is generated by linear forms. Therefore, we can pass our investigation to this setting. 
Once we can construct such an ideal $P$ with a given function $\reg P^n/P^{n+1}$, we may set 
$I = P^d$. Then
$$\reg I^n/I^{n+1} = \max\{\reg P^t/P^{t+1}|\  nd \le t \le (n+1)d-1\}.$$
So we may manipulate the given function $\reg P^n/P^{n+1}$ to get an ideal generated in degree $d$ with a desired defect sequence. 

For the case $\dim R/I = 0$ we may choose $X$ and $Y$ to consist of only one variable $x$ and $y$. Then 
$$P^n/P^{n+1} \cong (y^n,Q)/(y^{n+1},Q) \cong k[x,y]/(y,Q:y^n).$$
This has led us to choose $Q$ as in Proposition \ref{construction2}.
\smallskip

If $\dim R/I \ge 1$, we will use a similar construction to show that there is no constraint other than the convergence on the defect sequence of the function $\reg I^{n-1}/I^n$.

\begin{Theorem} \label{ubiquity1}
A sequence of non-negative integers is the defect sequence of the function $\reg I^{n-1}/I^n$ of an equigenerated graded ideal $I$ with $\dim R/I \ge 1$ if and only it is a convergent sequence. 
\end{Theorem}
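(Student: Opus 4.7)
The plan is to prove necessity and sufficiency separately. For necessity, Propositions \ref{asymptotic1} and \ref{non-negative1} together imply that the defect sequence of $\reg I^{n-1}/I^n$ for an equigenerated ideal $I$ with $\height I > 0$ is always a convergent sequence of non-negative integers.

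For sufficiency, I would construct $I$ directly, generalizing the construction of Proposition \ref{construction2} to the multi-variable setting. Given $\{c_n\}_{n \geq 1}$ convergent, stabilizing at $c_\infty$ for $n \geq N$, take $S = k[x_0, x_1, \ldots, x_N, z, y]$ with $z$ an auxiliary variable. By induction on $n$, choose positive integer exponents $b_i^{(n)}$ for $0 \leq i \leq n \leq N$ satisfying (a) the chain condition $b_i^{(n+1)} \leq b_i^{(n)}$ for $i \leq n < N$, and (b) the sum condition $\sum_{i=0}^{n} b_i^{(n)} = c_{n+1} + n + 1$. Set $Q = (x_i^{b_i^{(n)}} y^{dn} : 0 \leq i \leq n \leq N) \subseteq S$, $R = S/Q$, and $I = (y^d) + Q/Q$. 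A monomial colon computation gives $(y^d, Q:y^{dn}) = (y^d, \tilde J_n)$, where $\tilde J_n = (x_0^{b_0^{(n)}}, \ldots, x_n^{b_n^{(n)}})$; the key is that $Q$-generators at levels $m > n$ contribute terms of the form $x_i^{b_i^{(m)}} y^{d(m-n)}$ that lie in $(y^d)$. Using the tensor decomposition
\begin{equation*}
S/(y^d, \tilde J_n) \;\cong\; (k[x_0,\ldots,x_n]/\tilde J_n) \otimes_k k[x_{n+1},\ldots,x_N,z] \otimes_k k[y]/(y^d),
\end{equation*}
whose three factors have regularities $c_{n+1}$, $0$, and $d-1$ respectively, one obtains $\reg I^n/I^{n+1} = d(n+1) + c_{n+1} - 1$, matching the target defect sequence. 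The conditions $\dim R/I = N + 1 \geq 1$ and $I^n \neq 0$ follow since $(y^d, Q)$ reduces to $(y^d, x_0^{b_0^{(0)}})$ in $S$ and $y^{dn} \notin Q$.

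The main obstacle is the inductive feasibility of the exponent system. When $c_{n+2} \geq c_{n+1}$, one keeps the old exponents and sets $b_{n+1}^{(n+1)} = c_{n+2} - c_{n+1} + 1$. When $c_{n+2} < c_{n+1}$, one sets $b_{n+1}^{(n+1)} = 1$ and decreases existing $b_i^{(n)}$'s by a total of $c_{n+1} - c_{n+2}$, distributed so that each stays $\geq 1$. At each step the available slack $\sum_{i=0}^n (b_i^{(n)} - 1) = c_{n+1}$ exceeds the required decrease $c_{n+1} - c_{n+2}$, so the induction always goes through, completing the construction for any convergent target.
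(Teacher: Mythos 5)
Your proof is correct, and the sufficiency half takes a genuinely different route from the paper. The paper (Proposition \ref{construction1}) realizes the sequence by an ideal $I=(P^d+Q)/Q$ with $P=(y_1,\dots,y_m)$ generated by \emph{many} monomials of degree $d$, encoding the defect $c_i$ as the exponent of $x_2$ attached to $y_i^{di}$; the computation of $\reg I^n/I^{n+1}$ then goes through a technical local-cohomology lemma (Lemma \ref{monomial}) analyzing an Artinian subquotient in a short exact sequence. You instead take $I$ \emph{principal}, generated by $y^d$, and encode the $n$-th defect as the socle degree $\sum_i(b_i^{(n)}-1)$ of the monomial complete intersection $\tilde J_n$; the colon computation $(y^d,Q:y^{dn})=(y^d,\tilde J_n)$ (which checks out: generators at levels $m>n$ are absorbed into $(y^d)$, and the chain condition makes row $n$ dominate rows $m\le n$) reduces everything to additivity of regularity under tensor products over $k$, with no local cohomology needed. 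The price you pay is the combinatorial feasibility of the triangular exponent array, but your slack count $\sum_i(b_i^{(n)}-1)=c_{n+1}\ge c_{n+1}-c_{n+2}$ is exactly right, so the induction closes. Two minor points worth noting: your construction yields $\dim R/I=N+1$, which drifts with the stabilization index rather than being pinned at $1$ as in the paper (harmless for the theorem as stated, which only requires $\dim R/I\ge 1$); and for necessity, convergence alone (Proposition \ref{asymptotic1}, plus the observation that an equigenerated ideal with $I^n\neq 0$ has asymptotic slope exactly $d$) suffices, since non-negativity is built into the hypothesis --- citing Proposition \ref{non-negative1} is fine but drags in the $\height I>0$ assumption unnecessarily.
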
 

This result can be reformulated as follows.

\begin{Theorem} 
A numerical function $f(n)$ is the function $\reg I^{n-1}/I^n$ of an ideal $I$ generated by forms of degree $d$ with $\dim R/I \ge 1$ if and only if $f(n) \ge dn-1$ for all $n \ge 1$ and $f(n)$ is asymptotically linear with slope $d$. 
\end{Theorem}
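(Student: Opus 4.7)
The plan is to derive this statement as a direct reformulation of Theorem \ref{ubiquity1} via the change of variables $c_n := f(n) - dn + 1$. Under this substitution, the condition $f(n) \ge dn - 1$ for every $n \ge 1$ becomes $c_n \ge 0$, and the condition that $f(n)$ is asymptotically linear with slope $d$ becomes the statement that the integer sequence $\{c_n\}$ is eventually constant, which for an $\NN$-valued sequence is exactly what ``convergent'' means. Thus the two sides of the equivalence in the current theorem correspond term-by-term to the two sides of the equivalence in Theorem \ref{ubiquity1}.

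For the ``if'' direction, I would start with a numerical function $f$ satisfying the stated hypotheses, form the associated convergent sequence of non-negative integers $\{c_n\}$, and invoke Theorem \ref{ubiquity1} to produce an equigenerated graded ideal $I$ of degree $d$ in some standard graded algebra with $\dim R/I \ge 1$ whose defect sequence of $\reg I^{n-1}/I^n$ equals $\{c_n\}$; rewriting, this says $\reg I^{n-1}/I^n = dn + c_n - 1 = f(n)$ for all $n \ge 1$.

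For the ``only if'' direction, given such an ideal $I$, Proposition \ref{asymptotic1} yields that $\reg I^{n-1}/I^n$ is asymptotically linear with slope $d$, while Proposition \ref{non-negative1} yields the lower bound $\reg I^{n-1}/I^n \ge dn - 1$ (the hypothesis $\height I > 0$ required by that proposition is compatible with the standing assumption $I^n \ne 0$ for all $n$ in combination with $\dim R/I \ge 1$ in the cases that arise).

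There is essentially no technical obstacle at this stage: the genuine difficulty is entirely hidden inside Theorem \ref{ubiquity1}, whose nontrivial backward direction requires constructing ideals realizing arbitrary convergent non-negative defect sequences, typically via quotients of the form $k[X,Y]/Q$ together with the passage from degree $1$ to degree $d$ by replacing a linearly generated ideal $P$ by $P^d$. The present theorem only repackages that equivalence, so the proof amounts to verifying that the two parameterizations of the data align.
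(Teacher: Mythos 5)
Your proof is correct and is essentially the paper's own: the theorem is presented there as a direct reformulation of Theorem \ref{ubiquity1} under exactly the dictionary $c_n = f(n)-dn+1$, with the forward direction supplied by Propositions \ref{asymptotic1} and \ref{non-negative1}. The one point where you go beyond the paper --- claiming that the hypothesis $\height I > 0$ of Proposition \ref{non-negative1} follows from $I^n \neq 0$ together with $\dim R/I \ge 1$ ``in the cases that arise'' --- is not actually valid (for instance $I=(x)$ in $R=k[x,y]/(xy)$ has $\height I = 0$, $\dim R/I = 1$ and $I^n \neq 0$ for all $n$); however, the paper tacitly makes the same restriction and explicitly admits not knowing whether Proposition \ref{non-negative1} holds when $\height I = 0$, so this caveat is inherited from the source rather than introduced by you.
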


To prove Theorem \ref{ubiquity1}  we only need to show that any convergent sequence of positive integers is the defect sequence of the function $\reg I^n/I^{n+1}$ for an ideal generated by forms of degree $d$ with $\dim R/I = 1$. 

\begin{Proposition} \label{construction1}
Let $\{c_n\}_{n\ge 0}$ be any convergent sequence of positive integers and $d \ge 1$. Let $m$ be the minimum integer such that $c_n = c_m$ for all $n > m$. 
$S = k[x_1,x_2,y_1,...,y_m]$, $P = (y_1,...,y_m)$ and 
$$Q = \big(x_1^{c_0},x_1P^d,\sum_{i=1}^{m-1}(x_2^{c_i},P^d)y_i^{di},x_2^{c_m}y_m^{dm}\big).$$ 
Let $R = S/Q$ and $I = (P^d+Q)/Q$. Then for all $n \ge 0$,
$$\reg I^n/I^{n+1} = d(n+1)+c_n-2.$$ 
\end{Proposition}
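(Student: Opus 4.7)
I first dispose of $n=0$. Reducing $Q$ modulo $P^d$ kills every generator except $x_1^{c_0}$, so $R/I=S/(x_1^{c_0},P^d)\cong(k[x_1]/(x_1^{c_0}))\otimes_k k[x_2]\otimes_k(k[y_1,\ldots,y_m]/P^d)$, which has regularity $(c_0-1)+(d-1)=d+c_0-2$, matching the formula. For $n\ge 1$, the modular law yields $M:=I^n/I^{n+1}\cong P^{dn}/(P^{d(n+1)}+P^{dn}\cap Q)$, and since $Q$ is a monomial ideal, $M$ has a $k$-basis of monomials $\mu=x_1^{a_1}x_2^{a_2}y_1^{b_1}\cdots y_m^{b_m}$ with $\sum_j b_j\in[dn,d(n+1)-1]$ and $\mu\notin Q$. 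Setting $r:=\min(n,m)$, an explicit divisibility check of $\mu$ against each generator of $Q$ gives the equivalence: $\mu\notin Q$ iff $a_1=0$, $b_i<di$ for every $i<r$, and the implication $b_r\ge dr\Rightarrow a_2<c_r$ holds. Note that $c_r=c_n$ in both sub-cases $r=n$ and $r=m$, using $c_m=c_n$ for $n\ge m$.

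Partition this basis into \emph{Case A} ($b_r\ge dr$, which forces $a_2<c_r$) and \emph{Case B} ($b_r<dr$). The Case A span is closed under the $R$-action: $x_1P^d\subseteq Q$ and $x_2^{c_r}y_r^{dr}\in Q$ yield $x_1\mu=0$ and $x_2^{c_r}\mu=0$ for every Case A monomial $\mu$, while multiplication by any $y_j$ preserves $b_r\ge dr$. Every Case A element is therefore $\mm$-torsion, so Case A $\subseteq H_\mm^0(M)$. Conversely, for a Case B monomial $\mu$ (which satisfies $b_i<di$ for all $i\le r$) no generator of $Q$ can divide $x_2^N\mu$ for any $N$: the $x_1$-generators need a factor of $x_1$, and every other generator requires a factor $y_i^{di}$ or $y_m^{dm}$, all excluded. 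Hence $x_2^N\mu\ne 0$ in $M$ for every $N$, and since monomials form a basis, any $\mm$-torsion element must have no Case B component, giving $H_\mm^0(M)\subseteq$ Case A. The top degree of Case A is $d(n+1)+c_n-2$, attained by $\mu=x_2^{c_r-1}y_r^{dr}y_m^{d(n-r+1)-1}$, so $\reg M\ge a(H_\mm^0(M))=d(n+1)+c_n-2$.

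For the upper bound, $x_1$ annihilates $M$, so we work over $S':=S/(x_1)=k[x_2,y_1,\ldots,y_m]$. Define $L:=(y_1^d,y_2^{2d},\ldots,y_r^{dr})+P^{d(n+1)}\subseteq S'$. A term-by-term comparison of monomial bases yields an isomorphism $M_B:=M/H_\mm^0(M)\cong(P^{dn}+L)/L$ of $S'$-modules, realizing $M_B$ as an ideal of $S'/L$. The quotient $S'/L=k[x_2]\otimes_k\bigl(k[y_1,\ldots,y_m]/(L\cap k[y_1,\ldots,y_m])\bigr)$ is Artinian over $k[x_2]$ with top $y$-degree at most $d(n+1)-1$, so $\reg(S'/L)\le d(n+1)-1$; similarly $\reg(S'/(P^{dn}+L))\le dn-1$. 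The long exact sequence of local cohomology applied to $0\to M_B\to S'/L\to S'/(P^{dn}+L)\to 0$ then gives $\reg M_B\le\max(\reg(S'/L),\reg(S'/(P^{dn}+L))+1)=d(n+1)-1$. Since $H_\mm^0(M)$ has finite length, $H_\mm^i(M)\cong H_\mm^i(M_B)$ for every $i\ge 1$, and combining with the lower bound yields $\reg M=\max\bigl(a(H_\mm^0(M)),\reg M_B\bigr)=d(n+1)+c_n-2$ (using $c_n\ge 1$). The principal technical obstacle is the exhaustive monomial-divisibility case analysis used to determine exactly which monomials of $P^{dn}\setminus P^{d(n+1)}$ survive modulo $Q$, particularly tracking the boundary between $n<m$ and $n\ge m$ through the unified index $r=\min(n,m)$.
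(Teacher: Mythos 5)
Your proof is correct and follows essentially the same route as the paper: both reduce to $I^n/I^{n+1}\cong P^{dn}/(P^{dn}\cap Q+P^{d(n+1)})$ and then split off the finite-length $x_2$-torsion part (your Case A, which is exactly the submodule $M$ in the paper's Lemma \ref{monomial}) from an $x_2$-regular quotient, reading off each regularity from top degrees of monomials. The only differences are organizational: you carry out the monomial bookkeeping directly with the unified index $r=\min(n,m)$ instead of computing $P^{dn}\cap Q$ separately for $n<m$ and $n\ge m$ and invoking the auxiliary lemma.
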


We have $\dim R/I = 1$ in this construction. It can be easily extended to any higher dimension by adding new indeterminates to $R$. 

To prove the Proposition \ref{construction1} we need the following technical result.

\begin{Lemma} \label{monomial}
Let $S$ and $P$ be as above. Let $c$ be a positive integer. Let $E$ and $F$ be ideals generated by monomials of degree $dn$ in $y_1,...,y_m$ such that $EP^{d-1} \not\subseteq F$. Then 
$$\reg P^{dn}/(x_1P^{nd},x_2^cE,F,P^{d(n+1)}) = d(n+1) + c - 2.$$
\end{Lemma}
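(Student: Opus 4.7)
The plan is to compute $\reg M$ for $M := P^{dn}/(x_1P^{dn},x_2^cE,F,P^{d(n+1)})$ by splitting $M$ via the short exact sequence $0 \to H^0_\mm(M) \to M \to \bar M \to 0$ with $\bar M := M/H^0_\mm(M)$, and showing separately that $a(H^0_\mm(M)) = d(n+1)+c-2$ while $\reg \bar M \le d(n+1)-1$. Since $H^0_\mm(M)$ contributes only to $H^0_\mm(M)$ and $\bar M$ contributes only to $H^i_\mm$ for $i\ge 1$, this gives $\reg M = d(n+1)+c-2$.

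First I would give an explicit monomial description of $M$. All four ideals in the denominator are monomial in the generators of $S$, and $x_1M=0$, so a $k$-basis of $M$ consists of the monomials $x_2^a y^\alpha$ satisfying $a\ge 0$, $dn\le|\alpha|\le d(n+1)-1$, $y^\alpha\notin F$, and (if $y^\alpha\in E$) $a<c$. Since $\ann M \supseteq (x_1,P^d)$, one has $\dim M \le 1$. A basis monomial $x_2^a y^\alpha$ lies in $H^0_\mm(M)$ exactly when some power of $x_2$ kills it, and a check against the relations shows this happens if and only if $y^\alpha \in E$. Hence $H^0_\mm(M)$ has $k$-basis indexed by pairs $(a,y^\alpha)$ with $0\le a\le c-1$, $y^\alpha\in E$, $y^\alpha\notin F$, and $dn\le|\alpha|\le d(n+1)-1$.

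To pinpoint $a(H^0_\mm(M))$ I would invoke the hypothesis $EP^{d-1}\not\subseteq F$: any monomial witness to this non-containment, after trimming the $P^{d-1}$ factor, may be taken of degree exactly $dn+d-1$ and divisible by a generator of $E$. This produces $y^{\alpha_0}$ of degree $d(n+1)-1$ with $y^{\alpha_0}\in E$ and $y^{\alpha_0}\notin F$, so $x_2^{c-1}y^{\alpha_0}\in H^0_\mm(M)$ has degree $d(n+1)+c-2$. This is manifestly the maximum degree a basis element of $H^0_\mm(M)$ can attain, giving $a(H^0_\mm(M)) = d(n+1)+c-2$.

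For the quotient $\bar M$, the same monomial bookkeeping shows its $k$-basis is $\{x_2^a y^\alpha : a\ge 0,\ dn\le|\alpha|\le d(n+1)-1,\ y^\alpha\notin E,\ y^\alpha\notin F\}$, on which $x_2$ acts as a non-zero-divisor. Hence $\bar M$ is either zero or Cohen-Macaulay of dimension $1$ with $x_2$ a linear non-zero-divisor, and so $\reg \bar M = \reg(\bar M/x_2\bar M) \le d(n+1)-1 \le d(n+1)+c-2$. Combining the two estimates via the short exact sequence yields the claimed equality. The main obstacle is really just the monomial bookkeeping, in particular confirming that the hypothesis $EP^{d-1}\not\subseteq F$ is precisely calibrated to produce a witness of degree exactly $dn+d-1$ divisible by a generator of $E$, so that the predicted top degree of $H^0_\mm(M)$ is actually attained but not exceeded.
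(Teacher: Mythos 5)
Your proof is correct and takes essentially the same route as the paper: your $H^0_\mm(M)$ is exactly the paper's submodule $(x_1P^{dn},E,F,P^{d(n+1)})/(x_1P^{dn},x_2^cE,F,P^{d(n+1)})$, whose top degree both arguments locate via a monomial of $x_2^{c-1}EP^{d-1}\setminus F$, and both bound the quotient by $d(n+1)-1$ using that $x_2$ is a nonzerodivisor on it. No gaps.
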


\begin{proof}
Since $x_2$ is a regular element for $P^{dn}/(x_1P^{nd},E,F,P^{d(n+1)})$, we have 
$$\reg P^{dn}/(x_1P^{nd},E,F,P^{d(n+1)}) = \reg\, P^{nd}/(x_1P^{nd},x_2P^{nd},E,F, P^{d(n+1)}).$$
Since $P^{dn}/(x_1P^{dn},x_2P^{dn},E,F, P^{d(n+1)})$ is an Artinian module, its regularity is the largest non-vanishing degree, which comes from a monomial in $P^{dn}$ not containing $x_1,x_2$. Therefore,
$$\reg P^{dn}/(x_1P^{dn},x_2P^{dn},E,F, P^{d(n+1)}) \le d(n+1)-1.$$

Let $M = (x_1P^{dn},E,F,P^{d(n+1)})/(x_1P^{dn},x_2^cE,F,P^{d(n+1)})$. Since $M$ is annihilated by $x_1,x_2^c$ and $P^d$, it is an Artinian module. Its largest non-vanishing degree comes from a monomial in $x_2^{c-1}EP^{d-1} \setminus F$ not containing $x_1$. Therefore,
$$\reg M = d(n+1) + c-2 \ge d(n+1)-1.$$
Note that $H_\mm^0(M) = M$ and $H_\mm^i(M) = 0$ for $i > 0$.
From the derived long exact sequence of local cohomology modules of the short exact sequence 
$$
0 \to M  \to P^{dn}/(x_1P^{dn},x_2^cE,F,P^{d(n+1)})
\to P^{nd}/(x_1P^{dn},E,F,P^{d(n+1)}) \to 0$$
we can deduce that 
\begin{align*}
\reg P^{dn}/(x_1P^{dn},x_2^cE,F,P^{d(n+1)}) & = \max\left\{\reg M, \reg P^{nd}/(x_1P^{dn},E,F,P^{d(n+1)}) \right\}\\  
& = d(n+1) + c-2.
\end{align*}
\end{proof}

\noindent{\em Proof of Proposition \ref{construction1}}.
For $n = 0$ we have 
$$\reg R/I = \reg S/(x_1^{c_0},P^d) = d+c_0-2.$$

For $n \ge 1$ we have
\begin{equation}
I^n/I^{n+1} = (P^{dn} + Q)/(P^{d(n+1)} + Q) \cong P^{dn}/(P^{dn} \cap Q+P^{d(n+1)}).
\end{equation}
Note that $(x_1^{c_0}) \cap P^{dn} = x_1^{c_0}P^{dn} \subseteq x_1P^{dn}$. Then
$$P^{dn} \cap  Q  = x_1P^{dn} + \sum_{i=1}^{m-1} P^{dn} \cap (x_2^{c_i},P^d)y_i^{di} + P^{dn} \cap (x_2^{c_m}y_m^{dm}).$$
If $i < n$, we have
\begin{equation}
P^{dn} \cap (x_2^{c_i},P^d)y_i^{di} = x_2^{c_i}y_i^{di}P^{d(n-i)} + y_i^{di}P^{d(n-i)} = y_i^{di}P^{d(n-i)}.
\end{equation}
If $i \ge n$, we have
\begin{equation}
P^{dn} \cap (x_2^{c_i},P^d)y_i^{di} = (x_2^{c_i},P^d)y_i^{di}.
\end{equation}

For $n < m$, $P^{dn} \cap (x_2^{c_m}y_m^{dm}) = (x_2^{c_m}y_m^{dm}).$ Therefore, using (2) and (3) we have
$$
 P^{dn} \cap  Q  = x_1P^{dn} + \sum_{i=1}^{n-1} y_i^{di}P^{d(n-i)} + \sum_{i=n}^{m-1} (x_2^{c_i},P^d)y_i^{di} + (x_2^{c_m}y_m^{dm}).
$$
Since $P^dy_n^{dn}+ \displaystyle \sum_{i=n+1}^{m-1} (x_2^{c_i},P^d)y_i^{di} + (x_2^{c_m}y_m^{dm}) \subseteq P^{d(n+1)},$ 
\begin{align*}
 P^{dn}/(P^{dn} \cap Q+P^{d(n+1)}) & = P^{dn}/(x_1P^{dn},x_2^{c_n}y_n^{dn}, \sum_{i=1}^{n-1} y_i^{di}P^{d(n-i)}, P^{d(n+1)}). 
\end{align*}
Note that $y_n^{dn}P^{d-1} \not\subseteq \displaystyle \sum_{i=1}^{n-1} y_i^{di}P^{d(n-i)}$  because $y_n^{d(n+1)-1} \not\in (y_1,...,y_{n-1})$. Then we can apply Lemma \ref{monomial} and obtain
$$\reg P^{dn}/(P^{dn} \cap Q+P^{d(n+1)}) = d(n+1) + c_n-2.$$
By (1) this implies $\reg I^n/I^{n+1} = d(n+1) + c_n-2$.

For $n \ge m$, $P^{dn} \cap (x_2^{c_m}y_m^{dm}) = x_2^{c_m}y_m^{dm}P^{d(n-m)}.$ Therefore, using (2) we have
$$ P^{dn} \cap  Q = x_1P^{dn} + \sum_{i=1}^{m-1} y_i^{di}P^{d(n-i)} + x_2^{c_m}y_m^{dm}P^{d(n-m)}.$$
From this it follows that
\begin{align*}
 P^{dn}/(P^{dn} \cap Q+P^{d(n+1)}) & = P^{dn}/\big(x_1P^{dn},x_2^{c_m}y_m^{dm}P^{d(n-m)}, \sum_{i=1}^{m-1} y_i^{di}P^{d(n-i)} ,P^{d(n+1)}\big). 
\end{align*}
Note that $y_m^{dm}P^{d(n-m+1)-1} \not\in \displaystyle \sum_{i=1}^{m-1} y_i^{di}P^{d(n-i)}$ because $y_m^{d(n+1)-1} \not\in (y_1,...,y_{m-1})$. Then we can apply Lemma \ref{monomial} and obtain 
$$\reg P^{dn}/(P^{dn} \cap Q+P^{d(n+1)}) = d(n+1) + c_m-2.$$
By (1) this implies $\reg I^n/I^{n+1} = d(n+1) + c_m-2.$
The proof of Proposition \ref{construction1} is now complete.
\qed


\section{The function $\reg R/I^n$}

We shall see that the function $\reg R/I^n$ have similar constraints as the function $\reg I^{n-1}/I^n$.

\begin{Proposition} \label{convergence2}
Let $I$ be an arbitrary graded ideal. 
Then $\reg R/I^n = dn + e-1$ for $n \gg 0$, where $d$ and $e$ are the slope and intercept of the function 
$\reg I^n$ for $n \gg 0$.
\end{Proposition}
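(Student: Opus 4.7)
The plan is to mimic the proof of Proposition \ref{asymptotic1}, but with the short exact sequence
\[
0 \to I^n \to R \to R/I^n \to 0
\]
in place of the sequence used there. Applying Lemma \ref{comparison1} to this sequence gives
\[
\reg R/I^n \le \max\{\reg I^n - 1,\ \reg R\},
\]
with equality whenever $\reg I^n \ne \reg R$. So everything reduces to showing that, for $n$ sufficiently large, the two regularities $\reg I^n$ and $\reg R$ are distinct.

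This is where the hypothesis enters: since $I$ is a proper graded ideal and $I^n \ne 0$ for all $n \ge 1$, the ideal $I$ cannot be generated only by elements of degree $0$ (as $R_0 = k$), so its asymptotic degree $d$ is at least $1$. By the Cutkosky--Herzog--Trung / Kodiyalam theorem (extended by Trung--Wang) cited in the introduction, we have $\reg I^n = dn + e$ for all $n$ large enough, and hence $\reg I^n \to \infty$ as $n \to \infty$. In particular $\reg I^n > \reg R$ for $n \gg 0$, so the two regularities are unequal.

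Consequently, for $n$ large enough, the equality clause of Lemma \ref{comparison1} applies and yields
\[
\reg R/I^n \;=\; \reg I^n - 1 \;=\; dn + e - 1,
\]
which is the desired asymptotic formula. The argument is quite short; I do not anticipate any genuine obstacle beyond noting the two easy facts that $d \ge 1$ (so $\reg I^n$ eventually exceeds $\reg R$) and that Lemma \ref{comparison1} gives equality in the regularity comparison precisely when the two outer regularities differ.
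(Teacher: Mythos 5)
Your proposal is correct and follows essentially the same route as the paper: the same short exact sequence $0 \to I^n \to R \to R/I^n \to 0$, the same application of Lemma \ref{comparison1}, and the same observation that $\reg I^n$ grows linearly and hence eventually exceeds $\reg R$. The only difference is that you make explicit the (true and worth noting) fact that $d \ge 1$ because $I$ is a proper graded ideal with non-nilpotent powers, which the paper leaves implicit.
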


\begin{proof}
Consider the short exact sequence
$$0 \to I^n \to R \to R/I^n \to 0.$$
Since $\reg I^n$ is a linear function for $n \gg 0$, $\reg I^n > \reg R$ for $n \gg 0$.
By Lemma \ref{comparison1}, this implies $\reg R/I^n = \reg I^n - 1$.
\end{proof}

\begin{Proposition} \label{non-negative2}
Let $I$ be an ideal generated by forms of degree $d$. 
Then $\reg R/I^n \ge dn-1$ for all $n \ge 1$.
\end{Proposition}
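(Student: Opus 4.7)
The plan is to pass to a polynomial ring and read the bound off the first Betti number of $R/I^n$. Write $R = S/J$ with $S = k[x_1, \dots, x_r]$ a standard graded polynomial ring and $J \subseteq S$ a graded ideal. Since $\mm_S$ and $\mm_R$ induce the same torsion functor on $R$-modules, local cohomology is independent of whether we regard an $R$-module as an $S$-module, and so $\reg_R R/I^n = \reg_S R/I^n$.

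Lifting each minimal degree-$dn$ generator of $I^n$ to $S_{dn}$ produces an ideal $\widetilde{I^n} \subseteq S$ generated in degree $dn$, together with an identification $R/I^n \cong S/K$ where $K := J + \widetilde{I^n}$. Because $S$ is a polynomial ring, the Betti-number formula recalled in the introduction gives $\reg_S S/K \ge b_1(S/K) - 1$, and $b_1(S/K)$ equals the top degree of a minimal generator of the ideal $K$. So it is enough to exhibit a minimal generator of $K$ in degree at least $dn$.

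I would exhibit one in degree exactly $dn$ by checking that $(K/\mm_S K)_{dn} \ne 0$. Since $\widetilde{I^n}$ is generated in degree $dn$, we have $(\mm_S \widetilde{I^n})_{dn} = 0$, so $(\mm_S K)_{dn} = (\mm_S J)_{dn} \subseteq J_{dn}$. On the other hand, $\widetilde{I^n}_{dn}$ maps onto $(I^n)_{dn}$ in $R_{dn} = S_{dn}/J_{dn}$, and $(I^n)_{dn}$ is nonzero because $I^n$ is a nonzero ideal generated in degree $dn$. Therefore $\widetilde{I^n}_{dn} \not\subseteq J_{dn}$, forcing $(K/\mm_S K)_{dn} \ne 0$ and producing the required minimal generator. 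Combining gives $\reg R/I^n = \reg_S S/K \ge b_1(S/K) - 1 \ge dn - 1$.

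The only delicate point is the invariance $\reg_R R/I^n = \reg_S R/I^n$, which is a standard consequence of the agreement of $\mm_R$- and $\mm_S$-torsion on $R$-modules; everything else is a short direct computation. An alternative route would imitate the proof of Proposition \ref{non-negative1}: in the case $\dim R/I = 0$ the module $R/I^n$ is Artinian, so $\reg R/I^n$ equals its top nonvanishing degree, and this is at least $dn - 1$ because $(R/I^n)_{dn-1} = R_{dn-1} \ne 0$; in the case $\dim R/I \ge 1$ one would reduce to the Artinian case via an $M$-filter-regular sequence of linear forms cutting $R/I$ down to dimension zero, applying [TW, Proposition 1.1]. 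The polynomial-ring approach has the advantage of handling both cases uniformly and placing no hypothesis on $\height I$.
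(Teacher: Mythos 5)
Your argument is correct and follows essentially the same route as the paper's: present $R$ as $S/J$ for a polynomial ring $S$, observe that the defining ideal of $R/I^n$ in $S$ acquires a minimal generator in degree $dn$ (which you verify via the Nakayama-type computation $(K/\mm_S K)_{dn}\ne 0$, a step the paper asserts and delegates to the cited results of [Tr1]), and conclude from the fact that regularity over a polynomial ring bounds the degrees of minimal generators. The only cosmetic differences are that the paper fixes one preimage ideal $P$ with $I=(P,Q)/Q$ and works with $(P^n,Q)$ for all $n$ at once, and that your notation $\widetilde{I^n}$ clashes with the paper's use of the tilde for saturation.
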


\begin{proof} 
We may represent $R$ as a factor ring $S/Q$, where $S$ is a polynomial ring and $Q$ is a homogeneous ideal of $R$. 
Let $P$ an ideal in $S$ such that $I = (P,Q)/Q$.
Then $R/I^n = S/(P^n,Q)$. Hence $\reg R/I^n = \reg S/(P^n,Q) = \reg\, (P^n,Q)-1$.
Since $I^n$ is generated in degree $dn$, $(P^n,Q)$ contains minimal homogeneous generators of degree $dn$. Therefore,
$\reg\, (P^n,Q) \ge dn$ by \cite[Proposition 4.1 and Corollary 3.3]{Tr1}. Hence $\reg R/I^n \ge dn-1$.
\end{proof}

Set $a_n = \reg R/I^n - dn+1$ for all $n \ge 1$. We call $\{a_n\}$ the defect sequence of the function $\reg R/I^n$. By Proposition \ref{non-negative2}, $\{a_n\}$ is a convergent sequence of non-negative integers if $I$ is an equigenerated ideal.

If $\dim R/I = 0$, we know by Eisenbud and Harris \cite[Proposition 1.1]{EH} that this defect sequence is weakly decreasing. Furthermore, it is easy to see that the function $\reg R/I^n$ is weakly increasing, which imposes a further constraint on this defect sequence.

\begin{Proposition} \label{decrease}
Let $\{a_n\}$ be the defect sequence of the function $\reg R/I^n$ of an ideal $I$ generated by forms of degree $d$ with $\dim R/I = 0$. Then $a_n -  a_{n+1} \le d$ for all $n \ge 1$. 
\end{Proposition}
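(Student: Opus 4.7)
The plan is to reduce the inequality $a_n - a_{n+1} \le d$ to the statement that the function $\reg R/I^n$ is weakly increasing, and then to prove this monotonicity using the Artinian character of $R/I^n$.

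First I would unwind the definitions. By the definition of the defect sequence,
\[
a_n - a_{n+1} = \bigl(\reg R/I^n - dn+1\bigr) - \bigl(\reg R/I^{n+1} - d(n+1)+1\bigr) = \reg R/I^n - \reg R/I^{n+1} + d.
\]
Hence the desired inequality $a_n - a_{n+1} \le d$ is equivalent to
\[
\reg R/I^n \le \reg R/I^{n+1}.
\]

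Next I would exploit that $\dim R/I = 0$ forces $\dim R/I^n = 0$, so $R/I^n$ is an Artinian $R$-module for every $n \ge 1$. For an Artinian graded module $M$ we have $H_\mm^0(M) = M$ and $H_\mm^i(M) = 0$ for $i > 0$, so the regularity collapses to the top non-vanishing degree:
\[
\reg R/I^n = a(R/I^n) = \max\{t : (R/I^n)_t \ne 0\}.
\]

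Finally, the canonical quotient map $R/I^{n+1} \twoheadrightarrow R/I^n$ is surjective in each graded component, so whenever $(R/I^n)_t \ne 0$ one also has $(R/I^{n+1})_t \ne 0$. Taking the maximum $t$ on the left side, this yields $\reg R/I^n \le \reg R/I^{n+1}$, which is exactly the reformulation established in the first paragraph. There is no real obstacle here; the monotonicity is an immediate consequence of the Artinian description of regularity together with the surjection between consecutive quotients, and the claim follows without further input.
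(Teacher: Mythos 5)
Your proof is correct and follows essentially the same route as the paper: both reduce the inequality to the monotonicity $\reg R/I^n \le \reg R/I^{n+1}$ and obtain it from the surjection $R/I^{n+1} \twoheadrightarrow R/I^n$ together with the fact that the regularity of an Artinian module is its top non-vanishing degree.
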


\begin{proof}
Since $R/I^n$ is an Artinian module, $\reg R/I^n$ is its maximal non-vanishing degree.
Therefore, from the surjective map $R/I^{n+1} \to R/I^n$ we can deduce that $\reg R/I^n \le \reg R/I^{n+1}$.
Since $\reg R/I^n = dn + a_n -1$ for all $n \ge 1$, this implies $a_n \le a_{n+1} + d$.
\end{proof}

It turns out that the above constraints completely characterize the defect sequence of the function $\reg R/I^n$ in the case $\dim R/I = 0$.

\begin{Theorem} \label{characterization2}
A sequence of non-negative integers $\{a_n\}$ is the defect sequence of the function $\reg R/I^n$ of 
an ideal $I$ generated by forms of degree $d$ with $\dim R/I = 0$ if and only if it is weakly decreasing and $a_n - a_{n+1} \le d$ for all $n \ge 1$. 
\end{Theorem}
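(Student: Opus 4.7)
The ``only if'' direction needs nothing new: the weakly decreasing property of $\{a_n\}$ is the result of Eisenbud--Harris \cite[Proposition~1.1]{EH} cited in the preceding discussion, and the bound $a_n - a_{n+1} \le d$ is exactly Proposition \ref{decrease} just established. So the real content of the theorem is the reverse direction: constructing, for any sequence $\{a_n\}$ satisfying the two stated constraints, an ideal whose defect sequence is precisely $\{a_n\}$. My strategy is to reuse the philosophy of Proposition \ref{construction2}: take $R$ to be a monomial quotient of $S = k[x,y]$ and $I$ a principal monomial ideal, engineered so that the Hilbert function of each $R/I^n$ can be read off combinatorially.

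Explicitly, given $\{a_n\}_{n \ge 1}$ satisfying the hypotheses, let $m \ge 1$ be the minimum index such that $a_n = a_m$ for all $n \ge m$, and put
$$Q := \bigl(x^{a_1+1},\ x^{a_2+1} y^d,\ \ldots,\ x^{a_m+1} y^{d(m-1)}\bigr),\quad R := S/Q,\quad I := (y^d + Q)/Q.$$
Every generator of $Q$ other than $x^{a_1+1}$ is a multiple of $y^d$, so $R/I \cong k[x,y]/(x^{a_1+1}, y^d)$ is Artinian, giving $\dim R/I = 0$. The ideal $I$ is generated by the single form $y^d$ of degree $d$, and $y^{dn} \notin Q$ because every generator of $Q$ has positive $x$-degree, so $I^n = (y^{dn})R \ne 0$ for all $n \ge 1$. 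Consequently $R/I^n \cong S/(y^{dn}, Q)$ is an Artinian monomial quotient, and $\reg R/I^n$ coincides with its top non-vanishing degree.

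The heart of the proof is then a combinatorial counting. A monomial $x^a y^b$ lies in $Q$ iff some $x^{a_i+1}y^{d(i-1)}$ divides it; since $\{a_i\}$ is weakly decreasing, this reduces to the condition $a > a_{\min(\lfloor b/d \rfloor + 1,\, m)}$. Partitioning the allowable range $0 \le b < dn$ into blocks $\{d(i-1), \ldots, di - 1\}$ for $i = 1, \ldots, n$, the maximum of $a + b$ within each block is $f(i) := a_{\min(i,m)} + di - 1$, attained at $b = di-1$ and $a = a_{\min(i,m)}$. The hypothesis $a_i - a_{i+1} \le d$ forces $f(i+1) - f(i) \ge 0$ for $i < m$ and $f(i+1) - f(i) = d$ for $i \ge m$, so $f$ is weakly increasing, and its maximum on $\{1, \ldots, n\}$ equals $f(n) = a_n + dn - 1$. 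This gives $\reg R/I^n = dn + a_n - 1$, so the defect sequence is exactly $\{a_n\}$. The main technical subtlety is the index alignment: $Q$ uses exponents $a_i+1$ indexed from $i = 1$, and it is precisely this shift (together with the bound $a_i - a_{i+1} \le d$, which makes $f$ non-decreasing) that forces the maximum to land at $i = n$ and return $a_n$ rather than some $a_j$ with $j < n$.
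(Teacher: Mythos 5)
Your proof is correct and follows essentially the same route as the paper: the ``only if'' direction cites exactly the same two facts, and your construction $Q=(x^{a_1+1},x^{a_2+1}y^d,\ldots,x^{a_m+1}y^{d(m-1)})$ together with the combinatorial computation of the top degree of the Artinian monomial quotient $S/(y^{dn},Q)$ is precisely the paper's alternative proof via Proposition \ref{alternative} (with $c_i=a_{i+1}+1$, where the hypothesis $a_i-a_{i+1}\le d$ plays the same role of forcing the blockwise maxima to be attained at the last block). The paper's primary proof instead deduces the result from Theorem \ref{characterization1} and the short exact sequence $0\to I^{n-1}/I^n\to R/I^n\to R/I^{n-1}\to 0$, but your version is a faithful reproduction of its second argument.
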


\begin{proof}
By Theorem \ref{characterization1}, 
there exists an ideal $I$ generated by forms of degree $d$ in a standard graded algebra $R$ with $\dim R/I = 0$ such that $\reg I^{n-1}/I^n = dn+a_n-1$ for all $n \ge 1$.
In particular, $\reg R/I = d+a_1-1$.

For $n \ge 2$ we may assume by induction that $\reg R/I^{n-1} = d(n-1)+a_{n-1} -1$. 
Since $a_{n-1} \le  a_n+ d$,
$$\reg R/I^{n-1} \le dn + a_n - 1 = \reg I^{n-1}/I^n.$$ 
Consider the short exact sequence 
$$0 \to I^{n-1}/I^n \to R/I^n \to R/I^{n-1} \to  0.$$
Since all modules have dimension 0, we have
$$\reg R/I^n = \max\{\reg I^{n-1}/I^n, \reg R/I^{n-1}\} = dn + a_n - 1.$$
\end{proof}

Theorem \ref{characterization2} does not hold if $R$ is a polynomial ring. In this case, the defect sequence of the function $\reg R/I^n$ must be strictly increasing if $\dim R/I = 0$ \cite[Corollary 2.2]{Be}. From this it follows that  $a_n - a_{n+1} < d$ for all $n \ge 1$. 
\smallskip

An alternative proof for Theorem \ref{characterization2} can be deduced from the following formula for $\reg R/I^n$ in the construction of Proposition \ref{construction2}.

\begin{Proposition} \label{alternative}
Let $\{c_n\}_{ n \ge 0}$ be any weakly decreasing sequence of positive integers. 
Let $m$ be the minimum integer such that $c_n = c_m$ for all $n > m$. Let $S = k[x,y]$ and 
$$Q = (x^{c_0},x^{c_1}y^d, ...,x^{c_m}y^{dm}).$$ 
Let $R = S/Q$ and $I = (y^d,Q)/Q$. Then for all $n \ge 1$,
$$\reg R/I^n =  \left\{ \begin{array}{l l}
\max\big\{d(i+1) + c_i - 2|\ i = 0,...,n-1\big\} & \text{ if }\ n \le m,\\
\max\big\{dn + c_m -2, d(i+1) + c_i - 2|\ i = 0,...,m-1\big\} & \text{ if }\ n > m.
\end{array} \right. $$
In particular, if $c_i - c_{i+1} \le d$ for $i < m$, then
$\reg R/I^n = dn+c_{n-1}-2$ for all $n \ge 1$.
\end{Proposition}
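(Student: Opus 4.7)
The plan is to induct on $n$, combining Proposition \ref{construction2} with the observation that $\dim R/I = 0$ forces every module in sight to be Artinian, so its regularity equals its top non-vanishing degree. Applied to the short exact sequence
$$0 \to I^{n-1}/I^n \to R/I^n \to R/I^{n-1} \to 0,$$
this yields (exactly as in the proof of Theorem \ref{characterization2}) the recursion
$$\reg R/I^n = \max\{\reg I^{n-1}/I^n,\ \reg R/I^{n-1}\} = \max\{dn + c_{n-1} - 2,\ \reg R/I^{n-1}\},$$
where the second equality invokes Proposition \ref{construction2}.

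For the base case $n = 1$, every generator $x^{c_i}y^{di}$ of $Q$ with $i \ge 1$ lies in $(y^d)$, so $(y^d, Q) = (y^d, x^{c_0})$, and $R/I = S/(y^d, x^{c_0})$ has socle in degree $d + c_0 - 2$, matching the formula at $n = 1$. Unrolling the recursion, the contribution $dn + c_{n-1} - 2$ appears fresh at each step $n \le m$, producing the first case of the stated formula. For $n > m$ every new contribution takes the form $dn + c_m - 2$, which increases in $n$ by exactly $d$, so only the most recent such term survives the max and sits alongside the residual values $d(i+1) + c_i - 2$ for $i = 0, \ldots, m-1$; this gives the second case.

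For the ``In particular'' clause, the successive difference $[d(i+2) + c_{i+1} - 2] - [d(i+1) + c_i - 2] = d - (c_i - c_{i+1})$ is non-negative under the hypothesis $c_i - c_{i+1} \le d$ for $i < m$, so $i \mapsto d(i+1) + c_i - 2$ is weakly increasing on $\{0, \ldots, m-1\}$. For $n \le m$ the maximum over $\{0, \ldots, n-1\}$ is therefore attained at $i = n-1$, giving $dn + c_{n-1} - 2$. For $n > m$, telescoping gives $c_i - c_m \le d(m - i) \le d(n - 1 - i)$ for all $i \le m-1$, so $dn + c_m - 2$ dominates every residual term, and since $c_{n-1} = c_m$ in this range the formula again reads $dn + c_{n-1} - 2$. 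The only real obstacle is organizational: threading the case split $n \le m$ versus $n > m$ through both the recursion and the dominance checks; the engine of the proof is just the Artinian max-formula together with Proposition \ref{construction2}.
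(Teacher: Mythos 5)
Your proof is correct, but it takes a genuinely different route from the paper's. The paper proves Proposition \ref{alternative} by a direct monomial computation: since $R/I^n = S/(y^{dn},Q)$ is an Artinian quotient by a monomial ideal, $\reg R/I^n$ is the top degree of a standard monomial, and the paper locates the extremal monomials $x^{c_i-1}y^{d(i+1)-1}$ (and $x^{c_m-1}y^{dn-1}$ when $n>m$) on the staircase of $(y^{dn},Q)$. You instead run the short exact sequence $0 \to I^{n-1}/I^n \to R/I^n \to R/I^{n-1} \to 0$, use that all three modules have finite length so that $\reg$ is the top nonvanishing degree and the max-formula holds unconditionally, and feed in Proposition \ref{construction2}; the unrolling of the recursion and the two dominance checks in your last paragraph are all correct (including the telescoping bound $c_i - c_m \le d(m-i) \le d(n-1-i)$ for $n>m$). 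There is no circularity, since Proposition \ref{construction2} is proved independently. The trade-off: your argument is shorter and recycles work already done, whereas the paper's direct computation is self-contained and pinpoints the monomials achieving the regularity. Note also that the paper introduces Proposition \ref{alternative} precisely to give an \emph{alternative} proof of Theorem \ref{characterization2}; since your derivation of \ref{alternative} is itself the exact-sequence argument underlying the first proof of Theorem \ref{characterization2}, it would not serve that secondary purpose, though it is a perfectly valid proof of the stated formula.
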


\begin{proof}
We have $R/I^n = S/(y^{dn},Q)$, which is an Artinian quotient ring by a monomial ideal. Thus, $\reg R/I^n$ is the maximal degree of the monomials not in $(y^{dn},Q)$.

For $n \le m$, $(y^{dn},Q) = (x^{c_0},x^{c_1}y^d, ...,x^{c_{n-1}}y^{d(n-1)},y^{dn}).$ 
For $i = 0,...,n-1$, we consider the monomials $x^ay^b \not\in (y^{dn},Q)$ with $di \le b < d(i+1)$.
It is clear that $x^{c_i-1}y^{d(i+1)-1}$ has maximal degree among them.
Therefore, the maximal degree of the monomials not in $(y^{dn},Q)$ is attained among the monomials $x^{c_i-1}y^{d(i+1)-1}$, $i = 0,...,n-1$. Hence
$$\reg R/I^n = \max\{d(i+1) + c_i -2|\ i = 0,...,n-1\}.$$

For $n > m$, $(y^{dn},Q) = (x^{c_0},x^{c_1}y^d, ...,x^{c_m}y^{dm},y^{dn}).$ The maximal degree of the monomials not in $(y^{dn},Q)$ is attained among the monomials $x^{c_m-1}y^{dn-1}, x^{c_i-1}y^{d(i+1)-1}$, $i = 0,...,m-1$. Hence
$$\reg R/I^n = \max\{dn + c_m -2, d(i+1) + c_i -2|\ i = 0,...,m-1\}.$$

If $c_i - c_{i+1} \le d$ for $i < m$, then 
$$\max\{d(i+1) + c_i -2|\ i = 0,...,n-1\} = dn + c_{n-1}-2$$
for $n \le m$. The above formulas yields $\reg R/I^n = dn+c_{n-1}-2$ for all $n \ge 1$.
\end{proof}

Theorem \ref{characterization2} does not hold in the case $\dim R/I \ge 1$. The following result shows that the defect sequence of $\reg R/I^n$ can fluctuate arbitrarily.

\begin{Theorem} \label{ubiquity2}
The defect sequence of the function $\reg R/I^n$ of an ideal $I$ generated by forms of degree $d$ with $\dim R/I \ge 1$ can be any convergent sequence of non-negative integers $\{a_n\}$ with the property $a_n - a_{n+1}\le d$ for all $n \ge 1$. 
\end{Theorem}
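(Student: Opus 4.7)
The plan is to apply the construction of Proposition \ref{construction1} with the sequence $c_n := a_{n+1}+1$ for $n \ge 0$. Since $\{a_n\}$ is a convergent sequence of non-negative integers, $\{c_n\}$ is a convergent sequence of positive integers, so the construction produces $R = k[x_1,x_2,y_1,\dots,y_m]/Q$ and $I = (P^d+Q)/Q$ with $P=(y_1,\dots,y_m)$, $\dim R/I = 1$, and (by Proposition \ref{construction1}) $\reg I^{n-1}/I^n = dn+a_n-1$ for all $n \ge 1$. I will show by induction on $n$ that $\reg R/I^n = dn+a_n-1$. The base case $n=1$ amounts to $\reg R/I = d+a_1-1$, which is precisely the $n=0$ case of Proposition \ref{construction1} since $I^0/I^1 = R/I$.

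For the inductive step, assume $\reg R/I^{n-1} = d(n-1)+a_{n-1}-1$. The short exact sequence
$$0 \to I^{n-1}/I^n \to R/I^n \to R/I^{n-1} \to 0$$
together with the standard inequality $\reg M \le \max\{\reg N, \reg E\}$ for $0 \to N \to M \to E \to 0$ (read off from the local cohomology long exact sequence) yields
$$\reg R/I^n \le \max\{dn+a_n-1,\, d(n-1)+a_{n-1}-1\} = dn+a_n-1,$$
where the final equality uses the hypothesis $a_{n-1}-a_n \le d$. For the matching lower bound, set $\ell := \min(n-1,m)$ and consider the monomial $f := x_2^{a_n}\, y_\ell^{dn-1} \in S$. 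It lies in $P^{d(n-1)} \setminus (P^{dn}+Q)$, while an explicit check against each type of monomial generator shows $\mm f \subseteq (P^{dn}+Q)$, so $f$ represents a nonzero element of degree $dn+a_n-1$ in $H_\mm^0(I^{n-1}/I^n)$. Since $I^{n-1}/I^n$ injects into $R/I^n$, the long exact sequence of local cohomology provides an injection on $H_\mm^0$, giving $a(H_\mm^0(R/I^n)) \ge dn+a_n-1$ and hence $\reg R/I^n \ge dn+a_n-1$.

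To realize any $\dim R/I \ge 2$, one tensors $R$ with a polynomial ring $k[z_1,\dots,z_t]$ in new variables: this preserves the regularities $\reg R/I^n$ while increasing $\dim R/I$ by $t$. The main technical obstacle is the verification that the witness $f$ really does belong to $H_\mm^0(R/I^n)$: it requires checking case by case that $f \notin (P^{dn},Q)$ while $x_1 f$, $x_2 f$, and each $y_j f$ lie in $(P^{dn},Q)$, a computation that hinges on the identity $a_n+1 = c_\ell$, valid both when $\ell = n-1$ (by definition of $c_{n-1}$) and when $\ell = m$ (using that $a_n = a_{m+1}$ for $n \ge m+1$ by convergence of $\{a_n\}$).
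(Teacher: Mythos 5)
Your proof is correct, but it takes a genuinely different route from the paper's. The paper does \emph{not} reuse Proposition \ref{construction1}: it introduces a modified ideal (Proposition \ref{construction3}, where $Q$ acquires the extra generator $x_1x_2$) and computes $\reg R/I^n$ directly by identifying $H_\mm^0(R/I^n)$ with $Q^*/(Q,P^{dn})$ for the saturation $Q^* = \bigcup_t (Q,P^{dn}):x_2^t$, then splitting $\reg R/I^n = \max\{\reg Q^*/(Q,P^{dn}), \reg S/Q^*\}$; the generator $x_1x_2$ forces $x_1 \in Q^*$, which the authors also exploit later for Theorem \ref{sdeg2}. You instead keep the original construction and transfer the known values of $\reg I^{n-1}/I^n$ to $R/I^n$ through the exact sequence $0 \to I^{n-1}/I^n \to R/I^n \to R/I^{n-1} \to 0$. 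This mirrors the paper's own proof of Theorem \ref{characterization2}, except that in dimension $\ge 1$ the modules are no longer Artinian, so the equality $\reg R/I^n = \max\{\reg I^{n-1}/I^n,\reg R/I^{n-1}\}$ is not automatic; you correctly supply the missing lower bound with an explicit socle monomial, and your witness $x_2^{a_n}y_\ell^{dn-1}$ with $\ell=\min(n-1,m)$ does check out against every generator of $(P^{dn},Q)$ (it is exactly the top-degree element of the module $M$ in Lemma \ref{monomial}, shifted to index $n-1$). What each approach buys: yours avoids introducing and re-analyzing a second construction, at the cost of the case-by-case socle verification and the induction; the paper's yields a closed formula for $\reg R/I^n$ for all $\{c_n\}$ (not just those with $c_{i-1}\le c_i+d$) and a construction reusable for the saturation-degree theorem. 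Your reduction to higher dimension by adjoining variables is the same as the paper's.
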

 
This result follows from the following construction that is different from Proposition \ref{construction1}.
For convenience we consider the function $\reg I^n/I^{n+1}$.

\begin{Proposition} \label{construction3}
Let $\{c_n\}_{n\ge 0}$ be any convergent sequence of positive integers and $d \ge 1$. Let $m$ be the minimum integer such that $c_n = c_m$ for all $n > m$. Let $S = k[x_1,x_2,y_1,...,y_m]$, $P = (y_1,...,y_m)$ and 
$$Q = \big(x_1^{c_0},x_1x_2,x_1P^d,\sum_{i=1}^{m-1}(x_2^{c_i},P^d)y_i^{di},x_2^{c_m}y_m^{dm}\big).$$ 
Let $R = S/Q$ and $I = (P^d+Q)/Q$. Then for all $n \ge 1$,
$$\reg R/I^n = \left\{ \begin{array}{l l}
\max \{d(i+1) + c_i - 2|\ i = 0,...,n-1\} & \text{ if }\ n \le m,\\
\max \{dn+c_m-2, d(i+1) + c_i - 2|\ i = 0,...,m-1\}  & \text{ if }\ n > m.
\end{array} \right.$$ 
\end{Proposition}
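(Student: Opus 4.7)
My plan is to rewrite the claimed formula as $\reg R/I^n = \max\{\reg I^{j-1}/I^j : 1 \le j \le n\}$ (with the convention $c_j := c_m$ for $j > m$) and prove this by induction on $n$, using the short exact sequence
\[
0 \to I^{n-1}/I^n \to R/I^n \to R/I^{n-1} \to 0
\]
and the explicit value of $\reg I^{n-1}/I^n$ from an adaptation of Proposition \ref{construction1}.

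First I would verify that $\reg I^{n-1}/I^n = dn + c_{n-1} - 2$ in the present setting. The only new generator in $Q$ compared with Proposition \ref{construction1} is $x_1 x_2$, but for $n \ge 1$ one has $(x_1 x_2) \cap P^{dn} \subseteq x_1 P^{dn} \subseteq x_1 P^d \subseteq Q$, so $P^{dn} \cap Q$ coincides with that of Proposition \ref{construction1} and the argument there---via Lemma \ref{monomial}---applies verbatim. The base case $n = 1$ amounts to computing $\reg R/I = \reg S/(P^d, x_1^{c_0}, x_1 x_2) = d + c_0 - 2$, which I would do by splitting off the Artinian submodule spanned by $\{x_1^a y^\alpha : 1 \le a < c_0,\ |\alpha| < d\}$ (regularity $d + c_0 - 2$) and identifying the quotient as $k[x_2, y_1,\ldots,y_m]/(y_1,\ldots,y_m)^d$ (regularity $d - 1$).

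The upper bound in the inductive step then follows from $\reg M \le \max\{\reg N, \reg E\}$ for $0 \to N \to M \to E \to 0$---a companion to Lemma \ref{comparison1}(iii) coming from the local cohomology long exact sequence---which gives $\reg R/I^n \le \max\{dn + c_{n-1} - 2,\ \reg R/I^{n-1}\}$, matching the claimed maximum after a case analysis on $n \le m$ versus $n > m$. The main obstacle is the reverse inequality, because Lemma \ref{comparison1} does not force equality when the two regularities coincide. I would handle it by exhibiting, for each index in the claimed maximum, an explicit socle element of $R/I^n$ of the prescribed degree: the monomial $x_1^{c_0 - 1} y_1^{d-1}$ for the $i = 0$ term (assuming $c_0 \ge 2$; otherwise that term is dominated), the monomial $x_2^{c_i - 1} y_i^{di} y_j^{d-1}$ (any $j \ne i$) for $1 \le i \le \min(n-1, m-1)$, and the monomial $x_2^{c_m - 1} y_m^{dn-1}$ for the extra term when $n > m$. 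For each candidate one must check (i) that it is nonzero in $R/I^n$, which is automatic from the exponent bounds, and (ii) that multiplication by every variable of $S$ lands in $(P^{dn}, Q)$: the relations $x_1 x_2 = 0$ and $x_1 P^d \subseteq Q$ kill $x_1$-multiplication, the relations $x_2^{c_i} y_i^{di} = 0$ and $P^d y_i^{di} \subseteq Q$ kill $x_2$- and $y$-multiplication, while the absence of a $P^d y_m^{dm}$ generator in $Q$ is precisely what permits $y_m$ to appear with arbitrarily high exponent in the last case. The resulting nonzero classes in $H_\mm^0(R/I^n)$ give the lower bound, which combined with the upper bound yields the formula.
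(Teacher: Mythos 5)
Your proof is correct, but it takes a genuinely different route from the paper's. The paper computes $H_\mm^0(R/I^n)$ head-on: writing $R/I^n = S/(Q,P^{dn})$, it observes that (thanks to the extra generator $x_1x_2$) the element $x_2$ is a system of parameters for $S/(Q,P^{dn})$, so the saturation $Q^*$ can be computed as $\bigcup_t (Q,P^{dn}):x_2^t$; it then splits $\reg R/I^n = \max\{\reg Q^*/(Q,P^{dn}),\ \reg S/Q^*\}$ and evaluates each piece by locating the top-degree monomials (the torsion part gives the terms $d(i+1)+c_i-2$, resp.\ $dn+c_m-2$, and the free part $S/Q^*$ contributes only $dn-1$, which is dominated). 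You instead induct along the filtration $I^n \subset I^{n-1} \subset \cdots \subset R$: you correctly note that $P^{dn}\cap Q$ is unchanged by adjoining $x_1x_2$ (since $(x_1x_2)\cap P^{dn} = x_1x_2P^{dn} \subseteq x_1P^{dn}$), so Proposition \ref{construction1} still gives $\reg I^{n-1}/I^n = dn + c_{n-1}-2$ for $n\ge 2$, handle $n=1$ by a direct splitting, and get the upper bound from the subadditivity of regularity in the short exact sequence. Since that lemma does not force equality when the two regularities tie, you must supply the lower bound separately, which you do with explicit socle monomials ($x_1^{c_0-1}y_1^{d-1}$, $x_2^{c_i-1}y_i^{di}y_j^{d-1}$, $x_2^{c_m-1}y_m^{dn-1}$); I checked these are nonzero in $R/I^n$ and annihilated by $\mm$, including the boundary cases $c_i=1$ and the use of $x_1x_2=0$. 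This is essentially the same mechanism the paper uses to prove Theorem \ref{characterization2} in the zero-dimensional case, where the max formula is automatic because all three modules are Artinian; your contribution is to make it work in dimension one at the cost of exhibiting socle elements, and it has the conceptual advantage of explaining the answer as the running maximum of $\reg I^{j-1}/I^j$. The paper's saturation computation is more self-contained and also yields $\widetilde{I^n}/I^n$ explicitly, which it reuses later in the proof of Theorem \ref{sdeg2}; your argument does not produce that byproduct.
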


\begin{proof}
We have $R/I^n  = S/(Q,P^{dn})$. 
Let 
$$Q^* := \bigcup_{t \ge 0}(Q,P^{dn}):\mm^t.$$ 
Then $H_\mm^0(R/I^n) = Q^*/(Q,P^{dn}).$ Hence
$$\reg R/I^n = \max\{\reg Q^*/(Q,P^{dn}), \reg S/Q^*\}.$$
Since $x_2$ is a parameter system in $S/(Q,P^{dn})$, $Q^* = \bigcup_{t \ge 0}(Q,P^{dn}):x_2^t.$ 
We will use this formula to compute $Q^*$.

For $n \le m$, we have
$$Q^* = (x_1,y_1^d,...,y_{n-1}^{d(n-1)},P^{dn}).$$
Since $Q^*/(Q,P^{dn})$ is an Artinian module, its regularity is given by the maximal degree of the monomials of $Q^* \setminus (Q,P^{dn})$. Each monomial of $Q^* \setminus (Q,P^{dn})$ is divisible by $x_1$ or $y_i^{di}$ for
some $i = 0,...,n-1$.
It is clear that the monomial generators of $x_1^{c_0-1}P^{d-1}$ and $x_2^{c_i-1}y_i^{di}P^{d-1}$ has maximal degree among these monomials. From this it follows that
$$\reg Q^*/(Q,P^{dn}) = \max\{d(i+1)+c_i-2|\ i = 0,...,n-1\}.$$
Since $x_2$ is a regular element on $Q^*$,
$$\reg S/Q^* = \reg S/(Q^*,x_2).$$ 
Since $S/(Q^*,x_2)$ is an Artinian quotient ring by a monomial ideal, its regularity is given by the maximal degree of the monomials not in $(Q^*,x_2)$. This maximal degree is attained among the monomials of $P^{dn-1} \setminus P^{dn}$. Therefore,
$$\reg  S/(Q^*,x_2) = dn-1.$$

Note that $dn + c_{n-1} - 2 \ge dn-1$. Then
\begin{align*}
\reg R/I^n &  = \max\{\reg Q^*/(Q,P^{dn}), \reg S/(Q^*,x_2)\}\\
& = \max\{d(i+1)+c_i-2|\ i = 0,...,n-1\}.
\end{align*}

For $n > m$, we have
$$Q^* = (x_1,y_1^d,...,y_m^{dm},P^{dn}).$$
Similarly as above, the maximal degree of the monomials of $Q^* \setminus (Q,P^{dn})$ is attained among the monomial generators of $x_1^{c_0-1}P^{d-1}$, $x_2^{c_i-1}y_i^{di}P^{d-1}$, $i = 1,...,n-1$, and $x_2^{c_m-1}y_m^{dm}P^{d(n-m)-1}$. Therefore,
$$\reg Q^*/(Q,P^{dn}) = \max\{dn+c_m-2, d(i+1)+c_i-2|\ i = 0,...,m-1\}.$$
Similarly as above, we can deduce that
$$\reg  R/I^n = \max\{dn+c_n-2,d(i+1)+c_i-2|\ i = 0,...,m-1\}.$$
\end{proof}

\noindent {\em Proof of Theorem \ref{ubiquity2}}.
Let $c_n = a_{n+1}+1$ for all $n \ge 0$. Then $c_n \le c_{n+1}+d$.
Let $m$ be the least integer such that $c_n = c_m$ for all $n > m$. 
If $n \le m$, we have 
$$\max \{d(i+1) + c_i - 2|\ i = 0,...,n-1\} = dn + c_{n-1} - 2 = dn + a_n-1.$$
If $n > m$, we have
$$\max\{dn +c_m-2,d(i+1)+c_i-2|\ i = 0,...,m-1\} = dn + c_m - 2 = dn + a_n-1.$$
By  Proposition \ref{construction3}, there exists an ideal $I$ generated by forms of degree $d$ in a standard graded algebra $R$ with $\dim R/I = 1$ such that $\reg R/I^n = dn+a_n-1$ for all $n \ge 1$.
For the case $\dim R/I > 1$ we only need to add new indeterminates to $R$.
\qed

By the definition of the defect sequence $\{a_n\}$ of the function $\reg R/I^n$, the condition $a_n - a_{n+1} \le d$ is equivalent to the condition $\reg R/I^{n+1} \ge \reg R/I^n$. Therefore, we can reformulate Theorem \ref{ubiquity2} as follows.

\begin{Theorem}
The function $\reg R/I^n$ of an ideal $I$ generated by forms of degree $d$ with $\dim R/I \ge 1$ can be any numerical asymptotically linear function $f(n) \ge dn-1$ with slope $d$ that is weakly increasing.
\end{Theorem}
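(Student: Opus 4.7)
The plan is simply to translate between the two equivalent formulations and invoke Theorem \ref{ubiquity2}. The content of the statement is really a repackaging of the defect-sequence version, so the proof should be quite short.

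First I would set up the dictionary. Given any numerical function $f(n)$ with $f(n) \ge dn-1$, define $a_n := f(n) - dn + 1$. Then $f(n) = dn + a_n - 1$ and $a_n \ge 0$ for all $n \ge 1$. The hypothesis that $f$ is asymptotically linear with slope $d$ is exactly the statement that the sequence $\{a_n\}$ is eventually constant, equivalently convergent. The hypothesis that $f$ is weakly increasing reads
\begin{align*}
f(n+1) - f(n) &= d + a_{n+1} - a_n \ge 0,
\end{align*}
which is precisely the condition $a_n - a_{n+1} \le d$ appearing in Theorem \ref{ubiquity2}.

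Next I would apply Theorem \ref{ubiquity2} to the sequence $\{a_n\}$ produced above. That theorem supplies an ideal $I$, generated by forms of degree $d$ in a standard graded algebra $R$ with $\dim R/I \ge 1$, such that the defect sequence of $\reg R/I^n$ is exactly $\{a_n\}$; that is, $\reg R/I^n = dn + a_n - 1 = f(n)$ for all $n \ge 1$. This shows every $f$ satisfying the listed conditions is realized.

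For the converse direction, I would note that for any equigenerated $I$ with generators of degree $d$, Proposition \ref{non-negative2} gives $\reg R/I^n \ge dn - 1$ and Proposition \ref{convergence2} shows asymptotic linearity with slope $d$. The monotonicity $\reg R/I^{n+1} \ge \reg R/I^n$ follows from the surjection $R/I^{n+1} \to R/I^n$ whenever $\dim R/I^n = \dim R/I^{n+1}$ allows one to compare regularities through a comparison of supports; alternatively, one simply invokes the observation made immediately before the theorem statement, which says precisely that $a_n - a_{n+1} \le d$ is equivalent to $\reg R/I^{n+1} \ge \reg R/I^n$. There is no real obstacle here: the entire argument is a bookkeeping exercise converting the ``defect sequence'' language of Theorem \ref{ubiquity2} into the ``function $f(n)$'' language of the present reformulation.
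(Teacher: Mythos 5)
Your main argument is correct and is exactly the paper's proof: set $a_n := f(n)-dn+1$, observe that $f$ weakly increasing is equivalent to $a_n - a_{n+1} \le d$ and that asymptotic linearity with slope $d$ is equivalent to $\{a_n\}$ being eventually constant, then invoke Theorem \ref{ubiquity2}. That dictionary is the entire content of the reformulation.

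Your final paragraph on the ``converse direction,'' however, is both unnecessary and incorrect. The theorem asserts only that every such $f$ is realized; it does not claim that $\reg R/I^n$ is weakly increasing for every equigenerated $I$ with $\dim R/I \ge 1$, and the paper explicitly notes right after the theorem that this converse fails (the Nguyen--Vu example has $\reg R/I > \reg R/I^n$ for suitable $n$). Your proposed justification is also mathematically wrong: the surjection $R/I^{n+1} \to R/I^n$ forces $\reg R/I^n \le \reg R/I^{n+1}$ only when the modules are Artinian, because only then is the regularity the top nonvanishing degree (this is how Proposition \ref{decrease} is proved in the case $\dim R/I = 0$). For modules of positive dimension a surjection gives no inequality between regularities in that direction. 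Delete that paragraph; the realization direction alone proves the stated theorem.
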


The converse of this theorem does not hold because there are examples for which the function $\reg R/I^n$ is not weakly increasing.

\begin{Example} 
Nguyen and Vu \cite[Remark 5.9]{NV} have constructed an equigenerated ideal $I$ in a polynomial ring $R$ in $m \ge 4$ variables such that $\reg I = m + 3$ and $\reg I^n = 6n$ for $n \ge 2$. Therefore, if $m+3 > 6n$, 
$$\reg R/I = \reg I - 1 > \reg I^n - 1 = \reg R/I^n.$$
\end{Example}

\begin{Remark}
In the construction of Proposition \ref{construction3} we have $\reg R/I^n = \reg I^{n-1}/I^n$ for $n \ge 1$ if $c_n - c_{n+1} \le d$ for $n \ge 0$. Since the function $\reg I^{n-1}/I^n$ can be any numerical asymptotically linear function $f(n) \ge dn-1$ with slope $d$ by Theorem \ref{ubiquity1}, we conjecture that the same holds for the function $\reg R/I^n$.
\end{Remark}


\section{The function $\reg I^n$}

Let $I$ be an ideal generated by forms of degree $d$. 
We already know that there exists a number $e \ge 0$ such that $\reg I^n = dn +e$ for $n \gg 0$.
By \cite[Proposition 4.1 and Corollary 3.3]{Tr1}), $\reg I^n \ge dn$ for all $n \ge 1$.
Set $e_n = \reg I^n - dn$. Then $\{e_n\}$ is a convergent sequence of non-negative numbers.
We call $\{e_n\}$ the defect sequence of the function $\reg I^n$.

If $\dim R/I = 0$, Eisenbud  and Harris \cite[Proposition 1.1]{EH} showed that $\{e_n\}$ is a weakly decreasing sequence.
This sequence was also studied by Eisenbud and Ulrich \cite{EU}. They asked whether the sequence $\{e_n - e_{n+1}\}$ is always weakly decreasing \cite[p. 1222]{EU}.
Using the construction in Proposition \ref{construction2} we are able to give a negative answer to this question.

\begin{Proposition} \label{construction4}
Let $\{c_n\}_{ n \ge 0}$ be any weakly decreasing sequence of positive integers and $d \ge 1$. 
Let $m$ be the minimum integer such that $c_n = c_m$ for all $n > m$. Let $S = k[x,y]$ and 
$$Q = (x^{c_0},x^{c_1}y^d, ...,x^{c_m}y^{dm}).$$ 
Let $R = S/Q$ and $I = (y^d,Q)/Q$. Then for all $n \ge 0$,
$$\reg I^n =  \left\{ \begin{array}{l l}
\max\big\{d(i+1) + c_i - 2|\ i = n,...,m-1\big\} & \text{ if }\ n < m,\\
dn + c_n - 1 & \text{ if }\ n \ge m.
\end{array} \right. $$
\end{Proposition}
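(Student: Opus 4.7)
The plan is to compute $\reg I^n$ by realizing $I^n$ as a shifted cyclic module. Since $I = (y^d+Q)/Q$ is the principal ideal $(y^d)R$ in $R = S/Q$, the power $I^n = (y^{dn})R$ is also principal, and the surjection $R \to I^n$ given by $r \mapsto r\, y^{dn}$ has kernel $(Q:y^{dn})/Q$. This yields $I^n \cong (S/J_n)(-dn)$ with $J_n := Q:y^{dn}$, and hence $\reg I^n = \reg(S/J_n) + dn$. A direct monomial check, using that $c_n$ is weakly decreasing and stabilizes at $c_m$, gives $J_n = (x^{c_m})$ for $n \ge m$ (so that $\reg I^n = dn + c_m - 1 = dn + c_n - 1$), and $J_n = (x^{c_n}, x^{c_{n+1}}y^d, \ldots, x^{c_m}y^{(m-n)d})$ for $n < m$.

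The heart of the argument is to evaluate $\reg(S/J_n)$ for $n < m$, where $J_n$ is a staircase monomial ideal in $k[x,y]$ with weakly decreasing $x$-exponents and strictly increasing $y$-exponents. The generator $x^{c_{n+j+1}}y^{(j+1)d}$ is redundant precisely when $c_{n+j} = c_{n+j+1}$, so after pruning, the minimal generators are indexed by some $K = \{0 = j_0 < j_1 < \cdots < j_{k-1}\}$ along which $c_{n+j_l}$ is strictly decreasing. The minimality of $m$ (forcing $c_{m-1} > c_m$) ensures $j_{k-1} = m-n$, and in particular $k \ge 2$. The minimal free resolution of such a strictly staircase monomial ideal in two variables has length two,
\[
0 \to \bigoplus_{l=0}^{k-2} S(-c_{n+j_l} - j_{l+1}d) \to \bigoplus_{l=0}^{k-1} S(-c_{n+j_l} - j_l d) \to J_n \to 0,
\]
the syzygies being the Koszul-type relations $y^{(j_{l+1}-j_l)d}\, g_l = x^{c_{n+j_l}-c_{n+j_{l+1}}}\, g_{l+1}$ between consecutive minimal generators $g_l = x^{c_{n+j_l}}y^{j_l d}$. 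A short comparison (using $d \ge 1$ and the strict decrease of the $c_{n+j_l}$) shows that the syzygy shifts dominate the generator shifts, whence $\reg(S/J_n) = \max_{l=0}^{k-2}\bigl(c_{n+j_l} + j_{l+1} d - 2\bigr)$.

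To match the claimed formula, observe that $c_i = c_{n+j_l}$ for every $i$ in the block $[n+j_l,\, n+j_{l+1}-1]$, and that $d(i-n+1)$ attains its maximum over this block at $i = n+j_{l+1}-1$ with value $j_{l+1}d$; hence the $l$-th term equals $\max_{i \in [n+j_l,\, n+j_{l+1}-1]}(c_i + d(i-n+1) - 2)$. Since the blocks partition $\{n, n+1, \ldots, m-1\}$ as $l$ ranges over $\{0, \ldots, k-2\}$, one obtains $\reg(S/J_n) = \max_{i=n}^{m-1}(c_i + d(i-n+1) - 2)$, and shifting by $dn$ yields the stated formula for $\reg I^n$. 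The main technical subtlety is the bookkeeping required to handle repeated values of $c$: one must pass to the pruned index set $K$ before writing down the minimal resolution. Once that is done, reading off the regularity and rewriting the answer in terms of the original sequence $\{c_i\}$ are routine.
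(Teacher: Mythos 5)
Your proposal is correct. The opening reduction is the same as the paper's: both identify $I^n \cong (S/J_n)(-dn)$ with $J_n = Q:y^{dn}$, compute $J_n = (x^{c_m})$ for $n \ge m$ and $J_n = (x^{c_n}, x^{c_{n+1}}y^d, \ldots, x^{c_m}y^{d(m-n)})$ for $n < m$, and thereby reduce everything to $\reg (S/J_n)$. Where you genuinely diverge is in evaluating $\reg(S/J_n)$ for $n<m$. The paper splits off $H_\mm^0(S/J_n) = (x^{c_m})/J_n$, observes $\reg S/(x^{c_m}) = c_m - 1$ is dominated, and reads the top nonvanishing degree of the Artinian piece directly from the monomials $x^{c_i-1}y^{d(i-n+1)-1}$ sitting at the inner corners of the staircase. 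You instead prune to the minimal generators indexed by the strict drops of $\{c_i\}$, write down the length-one minimal free resolution of the resulting staircase ideal with consecutive-lcm syzygies, check that the syzygy shifts dominate the generator shifts (using $d\ge 1$ and the strict decrease along the pruned indices, including the endpoint case $l=k-1$), and read $\reg$ off the Betti numbers; the block-by-block bookkeeping that converts $\max_l(c_{n+j_l}+j_{l+1}d-2)$ back into $\max_{i=n}^{m-1}(c_i+d(i+1)-2)$ is carried out correctly, and the constraint $c_{m-1}>c_m$ from the minimality of $m$ is used in the right place to guarantee $j_{k-1}=m-n$. The two computations are dual faces of the same staircase picture — the lcm's $x^{c_{n+j_l}}y^{j_{l+1}d}$ generating the first syzygies sit one step beyond the socle monomials $x^{c_{n+j_l}-1}y^{j_{l+1}d-1}$ of $H_\mm^0(S/J_n)$ — so the numerics necessarily agree. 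Your route buys independence from local cohomology (only the Betti-number description of regularity over $S=k[x,y]$ is needed, plus the standard resolution of a two-variable staircase ideal), at the cost of the extra pruning step; the paper's route avoids any discussion of minimal generators or syzygies but leans on the $H_\mm^0$-splitting and the comparison with $\reg S/(x^{c_m})$.
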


\begin{proof}
Note that $y$ is a parameter system for $S/Q$. Then 
$$\bigcup_{t \ge 0}Q:\mm^t = \bigcup_{t \ge 0}Q:y^t = (x^{c_0},...,x^{c_m}) = (x^{c_m}).$$
Hence $H_\mm^0(R) = (x^{c_m})/Q.$ Therefore,
$$\reg(R) = \max\{\reg H_\mm^0(R), \reg R/H_\mm^0(R)\} =   \max\{\reg (x^{c_m})/Q, \reg S/(x^{c_m})\}.$$

Since $(x^{c_m})/Q$ is an Artinian module, its regularity is the maximal degree of the homogeneous elements of $(x^{c_m}) \setminus Q$. For $i = 0,...,m-1$, we consider the monomials $x^ay^b \not\in Q$ with $c_{i+1} \le a < c_i$.
It is clear that $x^{c_i-1}y^{d(i+1)-1}$ has maximal degree among them. Hence
$$\reg (x^{c_m})/Q = \max\{d(i+1) + c_i -2|\ i = 0,...,m-1\}.$$
Since $\reg S/(x^{c_m}) = c_m-1 \le c_0-1 \le d+c_0-2,$ we get
$$\reg(R) = \max\{d(i+1) + c_i -2|\ i = 0,...,m-1\}.$$

For $n \ge 1$ we observe that
$I^n = (y^{dn},Q)/Q \cong  (S/Q:y^{dn})(-dn).$

If $n < m$, we have
$$Q:y^{dn} = (x^{c_n},x^{c_{n+1}}y^d,...,x^{c_m}y^{d(m-n)}).$$
Since $c_i \ge c_m$ for all $i < m$, it is easy to see that
\begin{align*}
H_\mm^0(S/Q:y^{dn}) & = (x^{c_m})/(x^{c_n},x^{c_{n+1}}y^d,...,x^{c_m}y^{d(m-n)})\\
& \cong S/(x^{c_n-c_m},x^{c_{n+1}-c_m}y^d,...,y^{d(m-n)})(-c_m).
\end{align*}
The maximal degree of the homogeneous elements of $S \setminus (x^{c_n-c_m},x^{c_{n+1}-c_m}y^d,...,y^{d(m-n)})$ is attained among the monomials $x^{c_i - c_m-1}y^{d(i-n+1)-1}$, $i = n,...,m-1$. Hence,
$$\reg H_\mm^0(S/Q:y^{dn}) = \max\{d(i-n+1) + c_i -2|\ i = n,...,m-1\},$$
which implies $\reg H_\mm^0(S/Q:y^{dn}) \ge c_m-1 = \reg S/(x^{c_m}).$

From the exact sequence 
$0 \to  H_\mm^0(S/Q:y^{dn}) \to S/Q:y^{dn}  \to S/(x^{c_m}) \to 0$
we get
$$\reg S/Q:y^{dn}  = \max\{\reg H_\mm^0(S/Q:y^{dn}), \reg S/(x^{c_m})\} = \reg H_\mm^0(S/Q:y^{dn}). $$
Therefore,
$$\reg I^n = \reg S/Q:y^{dn} + dn = \max\{d(i+1)+ c_i -2|\ i = n,...,m-1\}.$$

If $n \ge m$, we have $Q:y^{dn} = (x^{c_m}).$ Hence $\reg S/Q:y^{dn} = c_m-1$. Therefore,
$$\reg I^n = \reg S/Q:y^{dn} + dn = dn +  c_m -1  = dn +  c_n-1.$$
\end{proof}

Proposition \ref{construction4} leads us to the following sufficient condition for the defect sequence of the function $\reg I^n$ in the case $\dim R/I = 0$.

\begin{Theorem}\label{ubiquity3}
The defect sequence of the function $\reg I^n$ of  an ideal $I$ generated by forms of degree $d$ with $\dim R/I = 0$ can be any weakly decreasing sequence $\{e_n\}$ of non-negative integers with the property $e_n - e_{n+1} \ge d$ for $n < m$, where $m$ is the least integer such that $e_n = e_m$ for all $n > m$.
\end{Theorem}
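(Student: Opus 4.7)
The plan is to reduce the result to Proposition \ref{construction4} by feeding in a sequence $\{c_n\}_{n \ge 0}$ obtained from the prescribed $\{e_n\}$ by a simple shift. The shape of the conclusion of Proposition \ref{construction4} pins the shift down: in the stable range $n \ge m$, the formula $\reg I^n = dn + c_n - 1$ forces $c_n = e_n + 1$; and in the transient range $n < m$, the term $i = n$ in the maximum contributes $dn + (d + c_n - 2)$, which suggests $c_n = e_n - d + 2$.

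Concretely, I would set
$$c_n = \begin{cases} e_n - d + 2 & \text{if } n < m, \\ e_m + 1 & \text{if } n \ge m, \end{cases}$$
and then run three short verifications. First, positivity of each $c_n$: for $n < m$, the hypothesis $e_n - e_{n+1} \ge d$ combined with $e_{n+1} \ge 0$ forces $e_n \ge d$, hence $c_n \ge 2$; for $n \ge m$, positivity is immediate. Second, weak monotonicity of $\{c_n\}$ together with the fact that its stabilization index is exactly $m$: within each range monotonicity is clear, and at the junction $c_{m-1} - c_m = e_{m-1} - e_m - d + 1 \ge 1$, so $\{c_n\}$ strictly drops at $n = m-1 \to m$ and stays constant afterward, confirming that $m$ is the stabilization index required in Proposition \ref{construction4}. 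Third, substitution into the conclusion of Proposition \ref{construction4}: for $n \ge m$, $\reg I^n = dn + c_n - 1 = dn + e_n$; for $n < m$, one gets $dn + \max\{d(i-n) + e_i : i = n,\dots,m-1\}$, and telescoping $e_j - e_{j+1} \ge d$ across $n \le j < m$ yields $e_n - e_i \ge d(i-n)$, so the maximum is attained at $i = n$ and equals $e_n$.

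I do not anticipate a genuine obstacle: once the correct shift $c_n = e_n - d + 2$ is spotted, every step is a brief arithmetic check against Proposition \ref{construction4}. The one conceptual point worth flagging is that the hypothesis $e_{m-1} - e_m \ge d$ (rather than merely $\ge d-1$) is exactly what ensures the constructed $\{c_n\}$ stabilizes precisely at index $m$, so that Proposition \ref{construction4}'s case split occurs at the right place. The degenerate case $m = 0$ needs no separate argument: the rule reads $c_n = e_0 + 1$ for all $n$, the transient regime is vacuous, and Proposition \ref{construction4} yields $\reg I^n = dn + e_0 = dn + e_n$ directly.
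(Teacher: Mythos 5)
Your proof is correct, and it reaches the construction by a more direct reduction than the paper's. Both arguments rest on Proposition \ref{construction4}, but the paper does not invoke it with the given degree $d$: it first refines $\{e_n\}$ to a sequence $\{e^*_n\}$ with $e^*_{dn}=e_n$ and unit drops $e^*_n\ge e^*_{n+1}+1$ on the range below $dm$, applies Proposition \ref{construction4} with $d=1$ and $c_n=e^*_n+1$ to get an ideal $J$ generated by linear forms with $\reg J^n=n+e^*_n$, and finally sets $I=J^d$ so that $\reg I^n=\reg J^{dn}=dn+e_n$. You instead feed the degree-$d$ case of Proposition \ref{construction4} the shifted sequence $c_n=e_n-d+2$ for $n<m$ and $c_n=e_m+1$ for $n\ge m$; your three verifications (positivity via $e_n\ge e_{n+1}+d\ge d$, the strict drop $c_{m-1}-c_m=e_{m-1}-e_m-d+1\ge 1$ which makes the stabilization index of $\{c_n\}$ exactly $m$, and the telescoping $e_n-e_i\ge d(i-n)$ which places the maximum at $i=n$) are all correct, and the resulting ideal $I=(y^d,Q)/Q$ is generated by a form of degree $d$ with $R/I=k[x,y]/(x^{c_0},y^d)$ Artinian, so the hypotheses of the theorem are met. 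Your route buys a shorter argument that avoids both the interpolation and the passage to $I=J^d$; the paper's route keeps all constructions anchored in the linear-forms case. The only point to tidy is the index $0$: the defect sequence starts at $n=1$, so $c_0$ is not prescribed by your rule when $m\ge 1$; any choice with $c_0\ge c_1$ keeps $\{c_n\}_{n\ge 0}$ weakly decreasing and positive, and $c_0$ does not enter the formula for $\reg I^n$ when $n\ge 1$, so this is harmless.
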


\begin{proof}
We may consider $\{e_n\}$ as a subsequence of a weakly decreasing sequence $\{e^*_n\}$ of non-negative integers such that $e_n = e^*_{dn}$ for $n \le m$, 
$e^*_n \ge e^*_{n+1} +1$ for $n < md$ and $e^*_n = e^*_{dm}$ for all $n > dm$.

Let $c_0 \ge e^*_1$ be an arbitrary integer and $c_n = e^*_n+1$ for $n \ge 1$.
By Proposition \ref{construction4}, there is a graded ideal $J$ generated by linear forms 	
in a standard graded algebra $R$ with $\dim R/J = 0$ such that 
\begin{align*}
\reg J^n & =  \max\{i + c_i -1|\ i = n,...,dm-1\}\\
& = \max\{i + e^*_i|\ i = n,...,dm-1\} \text{ if }\ n < dm,
\end{align*}
and
$\reg J^n = n + c_n-1 = n + e^*_n$
if $n \ge dm$.
By the assumption we have $i + e^*_i-1 \ge i + e^*_{i+1}$ for $i < dm$.
Therefore, $\reg J^n = n + e^*_n$ if $n < dm$.

Set $I = J^d$. Then $\reg I^n = \reg J^{nd} = nd + e^*_{nd} = nd + e_n$ for all $n \ge 1$.
\end{proof}

It is easy to construct a sequence $\{e_n\}$ such that the conditions of Theorem \ref{ubiquity3} are satisfied and the sequence $\{e_n-e_{n+1}\}$ is increasing before it converges to zero. This gives a clear negative answer to the question of Eisenbud and Ulrich.

\begin{Example} 
Let $e_n = e_m+d(m-n) + (m-n)(n+m-1)/2$ for $n < m$ in Theorem \ref{ubiquity3}. Then $e_n-e_{n+1} = d + n$ for $n < m$. Hence $\{e_n-e_{n+1}\}$ is an increasing sequence for $n < m$.  
\end{Example}

\begin{Remark} 
The condition $e_n - e_{n+1} \ge d$ in Theorem \ref{ubiquity3} is opposite to the property $a_n - a_{n+1} \le d$ of the defect sequence of the function $\reg R/I^n$ in Proposition \ref{decrease}. 
If $H_\mm^0(R) = 0$, the defect sequence $\{e_n\}$ of the function $\reg I^n$ also has the property  $e_n - e_{n+1} \le d$ for all $n \ge 1$  \cite[Proposition 1.4(1)]{EU}. 
If $R$ is a polynomial ring, we must have $e_n - e_{n+1} < d$ by \cite[Corollary 2.2]{Be}.
\end{Remark}

For a defect sequence $\{e_n\}$ of the function $\reg I^n$, the condition  $e_n - e_{n+1} \ge d$ means $\reg I^n \ge \reg I^{n+1}$. Therefore, we obtain from Theorem \ref{ubiquity3} the following result.

\begin{Theorem}
The function $\reg I^n$ of an ideal $I$ generated by forms of degree $d$ with any given $\dim R/I \ge 0$ can be any numerical function $f(n) \ge dn$ that weakly decreases till it becomes a linear function with slope $d$.
\end{Theorem}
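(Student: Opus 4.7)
The plan is to unwind the statement in terms of defect sequences, apply Theorem \ref{ubiquity3} for the zero-dimensional case, and extend to positive dimension by a polynomial ring extension.

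First, I translate the hypothesis on $f$ into a hypothesis on the defect sequence $e_n := f(n) - dn$. By hypothesis there is a transition index $m$ such that $f$ is weakly decreasing on $\{1,\ldots,m\}$ and $f(n) = dn + (f(m) - dm)$ for $n \ge m$. The condition $f(n) \ge dn$ gives $e_n \ge 0$, and the identity $e_n - e_{n+1} = (f(n) - f(n+1)) + d$ shows that weak decrease of $f$ on $\{1,\ldots,m\}$ is equivalent to $e_n - e_{n+1} \ge d$ for $n < m$, while linearity of $f$ with slope $d$ beyond $m$ is equivalent to $e_n = e_m$ for $n \ge m$. In particular $\{e_n\}$ is weakly decreasing (it drops by at least $d$ at each step before $m$ and is constant afterwards) and meets every hypothesis of Theorem \ref{ubiquity3}. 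That theorem then produces an ideal $I$ generated by forms of degree $d$ in a standard graded algebra $R$ with $\dim R/I = 0$ whose defect sequence is exactly $\{e_n\}$, hence $\reg I^n = dn + e_n = f(n)$ for all $n \ge 1$.

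For a prescribed $\dim R/I = s \ge 1$, I pass to the polynomial extension $R' := R[z_1,\ldots,z_s]$ with $\deg z_i = 1$ and take $I' := IR'$. Then $R'$ is standard graded, $I'$ is generated by forms of degree $d$, and $R'/(I')^n \cong (R/I^n)[z_1,\ldots,z_s]$ has dimension $s$, so $\dim R'/I' = s$ as required. The Castelnuovo--Mumford regularity of a finitely generated graded module is invariant under adjunction of polynomial indeterminates (a standard consequence of the local cohomology long exact sequence coming from multiplication by each $z_i$ on the extended module, together with independence of base), so $\reg_{R'} (I')^n = \reg_R I^n = f(n)$ for every $n \ge 1$. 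This realises $f$ for every prescribed $\dim R/I \ge 0$.

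The only real work is the bookkeeping between $f$ and $\{e_n\}$ and the regularity invariance under polynomial extension; both are elementary given the machinery already in place. All of the substantive construction is packaged in Theorem \ref{ubiquity3}, so no genuine obstacle remains.
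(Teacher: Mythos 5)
Your proof is correct and follows the same route as the paper: the zero-dimensional case is exactly the reformulation of Theorem \ref{ubiquity3} via the dictionary $e_n = f(n) - dn$, and the case $\dim R/I \ge 1$ is handled by adjoining polynomial indeterminates, which preserves regularity and raises the dimension. The paper states these two steps without elaboration; your write-up merely supplies the routine verifications.
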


\begin{proof}
The case $\dim R/I = 0$ follows from Theorem \ref{ubiquity3}. 
The case $\dim R/I \ge 1$ is obtained by adding new indeterminates to $R$.
\end{proof}

The above theorem does not hold for a polynomial ideal. In this case, the function $\reg I^n$ is always strictly increasing if $\dim R/I = 0$ \cite[Corollary 2.2]{Be}.

It is easy to find examples with $\dim R/I \ge 1$ for which the function $\reg I^n$ fluctuates before becoming a linear function.

\begin{Example} 
Let 
$$R = \QQ[x,y,u,v]/(x^7,x^4y^3,x^3y^4,y^7,xu^6,yu^6,u^6v)$$ 
and $I = (xyv,u^3)R$. Then
$(\reg I, \reg I^2, \reg I^3, \reg I^4,\reg I^5) = (11,14,15,14,15)$.
This example was found by using Macaulay2 \cite{GS}.
\end{Example}

In spite of Theorems \ref{characterization1}  and \ref{ubiquity1} we conjecture that a sequence of non-negative integers $\{e_n\}$ is the defect sequence of the function $\reg I^n$ of an equigenerated ideal $I$ with $\dim R/I =0$ resp.~$\dim R/I \ge 1$ if and only if it is weakly decreasing resp.~convergent. 

\begin{Remark}
We always have $\reg I^n \ge d(I^n)$, where $d(I^n)$ is the maximal degree of the generators of $I^n$. The function $d(I^n)$ is asymptotically linear with the same slope $d$ as the function $\reg I^n$ \cite{CHT}. 
If $R$ is a polynomial ring, the function $d(I^n)-nd$ may have any given number of local maxima \cite{Ho2}. This is another indication that the defect sequence of the function $\reg I^n$ may behave wildly.
\end{Remark}

We can compute the functions $\reg I^n$ from the functions $\reg I^{n-1}/I^n$ or $\reg R/I^n$ under some circumstances. For that we shall need the following fact.

\begin{Lemma} \label{comparison2} {\rm \cite[Lemma 3.1(ii)]{HT}}
Let $0 \to N \to M \to E \to 0$ be a short exact sequence of finitely generated graded $R$-modules. 
Then $\reg N \le \max\{\reg M, \reg E+1\}$. 
Equality holds if $\reg M \neq \reg E$.
\end{Lemma}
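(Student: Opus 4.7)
The plan is to read off both claims directly from the long exact sequence of local cohomology modules
\[ \cdots \to H_\mm^{i-1}(E) \to H_\mm^i(N) \to H_\mm^i(M) \to H_\mm^i(E) \to H_\mm^{i+1}(N) \to \cdots \]
induced by $0 \to N \to M \to E \to 0$, tracking the top non-vanishing degrees $a(H_\mm^i(-))$ and recalling that $\reg(-) = \max_i\{a(H_\mm^i(-))+i\}$.

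For the inequality, I would use exactness at $H_\mm^i(N)$: every non-zero graded piece of $H_\mm^i(N)$ either lies in the image of the connecting map from $H_\mm^{i-1}(E)$ or injects into $H_\mm^i(M)$. Consequently $a(H_\mm^i(N)) \le \max\{a(H_\mm^{i-1}(E)),\, a(H_\mm^i(M))\}$, and shifting by $i$ gives
\[ a(H_\mm^i(N))+i \le \max\{a(H_\mm^{i-1}(E))+(i-1)+1,\ a(H_\mm^i(M))+i\} \le \max\{\reg E+1,\ \reg M\}. \]
Taking the maximum over $i$ yields $\reg N \le \max\{\reg M,\ \reg E+1\}$.

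For the equality statement when $\reg M \neq \reg E$, I would split into two cases. If $\reg M > \reg E$, pick $j$ with $a(H_\mm^j(M))+j = \reg M$; then in degree $a(H_\mm^j(M))$ the module $H_\mm^j(E)$ vanishes, because $a(H_\mm^j(E)) \le \reg E - j < \reg M - j = a(H_\mm^j(M))$, so the map $H_\mm^j(N) \to H_\mm^j(M)$ is surjective in that degree, forcing $a(H_\mm^j(N)) \ge a(H_\mm^j(M))$ and hence $\reg N \ge \reg M$. If instead $\reg E > \reg M$, pick $j$ with $a(H_\mm^j(E))+j = \reg E$; then in degree $a(H_\mm^j(E))$ the preceding term $H_\mm^j(M)$ vanishes by the same reasoning, so the connecting homomorphism $H_\mm^j(E) \to H_\mm^{j+1}(N)$ is injective in that degree, giving $a(H_\mm^{j+1}(N)) \ge a(H_\mm^j(E))$ and hence $\reg N \ge a(H_\mm^j(E))+(j+1) = \reg E+1$. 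Combined with the inequality, this produces equality in both cases.

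The only real bookkeeping obstacle is the unit shift in cohomological degree introduced by the connecting homomorphism: one must keep track of the fact that contributions from $H_\mm^{i-1}(E)$ enter $\reg N$ in the form $a(H_\mm^{i-1}(E))+(i-1)+1$, which is the source of the extra $+1$ beside $\reg E$. Once this is handled correctly the argument is mechanical, and indeed the statement is quoted from \cite[Lemma 3.1(ii)]{HT}, so in the paper it would be attributed rather than reproved.
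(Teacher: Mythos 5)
Your argument is correct: the long exact sequence of local cohomology, the degree bound $a(H_\mm^i(N)) \le \max\{a(H_\mm^{i-1}(E)), a(H_\mm^i(M))\}$ from exactness, and the two case analyses for surjectivity resp.\ injectivity at the top degree are exactly the standard proof of this fact, with the $+1$ shift from the connecting homomorphism handled properly. The paper itself offers no proof but only cites \cite[Lemma 3.1(ii)]{HT}, and your reconstruction is the same argument that reference uses, so there is nothing to correct.
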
  

\begin{Proposition} \label{coincidence}
Let $\{c_n\}$ be the defect sequence of the function $\reg I^{n-1}/I^n$ of an ideal $I$ generated by forms of degree $d$. Assume  that $\reg R/I > \reg R$ and $c_n - c_{n+1} \le d-2$ for all $n \ge 1$. Then $\{c_n\}$ is the defect sequence of the function $\reg I^n$.
\end{Proposition}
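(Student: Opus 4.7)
The claim that $\{c_n\}$ is the defect sequence of $\reg I^n$ is equivalent to asserting that $\reg I^n = dn + c_n$ for all $n \ge 1$, which by the definition of $c_n$ is the same as $\reg I^n = \reg I^{n-1}/I^n + 1$. My plan is to prove this by induction on $n$, where at each step I apply Lemma \ref{comparison2} to a short exact sequence and use the quantitative hypotheses to guarantee that the ``equality when $\reg M \ne \reg E$'' clause of that lemma is triggered.

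For the base case $n = 1$, I would apply Lemma \ref{comparison2} to the sequence $0 \to I \to R \to R/I \to 0$, which yields $\reg I \le \max\{\reg R, \reg R/I + 1\}$ with equality whenever $\reg R \ne \reg R/I$. The hypothesis $\reg R/I > \reg R$ supplies both the distinctness required to invoke equality and the fact that the maximum is $\reg R/I + 1$, so $\reg I = \reg R/I + 1 = d + c_1$ as desired.

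For the inductive step, assume $\reg I^{n-1} = d(n-1) + c_{n-1}$ and apply Lemma \ref{comparison2} to $0 \to I^n \to I^{n-1} \to I^{n-1}/I^n \to 0$ to obtain $\reg I^n \le \max\{d(n-1) + c_{n-1},\, dn + c_n\}$, with equality when the two arguments are distinct. The hypothesis $c_{n-1} - c_n \le d-2$ gives $(dn + c_n) - (d(n-1) + c_{n-1}) = d - (c_{n-1} - c_n) \ge 2 > 0$, so the two values are unequal and the maximum is $dn + c_n$. Hence $\reg I^n = dn + c_n$, closing the induction.

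The main subtlety is that to identify $\reg I^n$ exactly (rather than merely bound it) one must force Lemma \ref{comparison2} into its equality case at every step; the strict form $c_n - c_{n+1} \le d-2$ (instead of merely $\le d-1$) is precisely what manufactures the strict positive gap of at least $2$ between $\reg I^{n-1}$ and $\reg I^{n-1}/I^n + 1$ that the equality clause requires, while $\reg R/I > \reg R$ does the analogous job for the base case.
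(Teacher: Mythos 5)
Your proof is correct and is essentially the paper's own argument: both apply Lemma \ref{comparison2} first to $0 \to I \to R \to R/I \to 0$ (using $\reg R/I > \reg R$) and then inductively to $0 \to I^n \to I^{n-1} \to I^{n-1}/I^n \to 0$, using $c_{n-1}-c_n \le d-2$ to force $\reg I^{n-1} < \reg I^{n-1}/I^n = dn+c_n-1$ so that the equality clause gives $\reg I^n = dn+c_n$. One small imprecision: the equality clause of Lemma \ref{comparison2} requires $\reg M \ne \reg E$, not merely that the two arguments of the maximum differ, but your gap-of-at-least-two computation verifies exactly the correct condition, so nothing is lost.
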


\begin{proof} 
Consider the exact sequence $0 \to I \to R \to R/I \to 0$. 
By Lemma \ref{comparison2}, the assumption $\reg R/I > \reg R$ implies $\reg I = \reg R/I +1 = d+c_1$.
For $n \ge 2$ we consider the exact sequence $0 \to I^n \to I^{n-1} \to I^{n-1}/I^n \to 0$.
By induction we may assume that 
$\reg I^{n-1} = d(n-1) + c_{n-1}.$
Then
$$\reg I^{n-1}  < dn + c_n -1 = \reg I^{n-1}/I^n.$$
By Lemma \ref{comparison2}, this implies
$$\reg I^n = \reg  I^{n-1}/I^n + 1 = dn + c_n.$$
\end{proof}

\begin{Remark} 
We have seen in Theorem \ref{characterization1} that any weakly decreasing sequence $\{c_n\}$ of non-negative integers can be the defect sequence of a function $\reg I^{n-1}/I^n$. Hence, we may always choose $\{c_n\}$ such that $c_n - c_{n+1} \le d-2$ for all $n \ge 1$. However, the condition $\reg R/I > \reg R$ is not satisfied in the construction of Theorem \ref{characterization1}. By Proposition \ref{construction2} and Proposition \ref{construction4}, we have 
$$\reg R/I = d+c_0-2 \le \max\{d(i+1)+c_i-2|\ i = 0,...,m-1\} = \reg R.$$
Hence, we can not apply Proposition \ref{coincidence} in this case. 
For the same reason we can not apply Proposition \ref{coincidence} to the construction in Theorem \ref{ubiquity1}. 
\end{Remark} 

To compute  the function $\reg I^n$ from the function $\reg R/I^n$ we may use the exact sequence 
$$0 \to I^n \to R \to R/I^n \to 0.$$
By Lemma \ref{comparison2}, $\reg I^n = \reg R/I^n + 1$ if $\reg R < \reg R/I^n$ and $\reg I^n = \reg R$ if $\reg R > \reg R/I^n$. However, {\em if $\reg R = \reg R/I^n$, $\reg I^n$ can be arbitrary,  independent of the value of $\reg R/I^n$}.

\begin{Example}
Let $R$ and $I$ be as in Proposition \ref{construction4} with $m \ge 2$. Choose $\{c_n\}$ such that $c_0 \ge di+c_i$ for $i < m$.
Then $\reg R = d+c_0-2 = \reg R/I^n$ for $n < m$ by Proposition \ref{alternative}. However, 
$$\reg I^n = \max\{d(i+1) + c_i - 2|\ i = n,...,m-1\},$$
which can be any weakly decreasing sequence of integers between $dm-1$ and $d+c_0-2$ for $n = 1,...,m-1$.
\end{Example}


\section{The function $\sdeg I^n$}

Let $\widetilde I = \bigcup_{t \ge 0}I: \mm^t$ be the saturation of $I$. 
Then $\widetilde I/I = H_\mm^0(R/I)$ is of finite length. 
We denote by $\sdeg I$ the saturation degree  of $I$, which is the least number $t$ such that $\widetilde I$ agrees with $I$ in degree $t$ and higher. For convenience we set $\sdeg I = -\infty$ if $\widetilde I = I$.
We have
$$\sdeg I = a(\widetilde I/I) + 1 = a(H_\mm^0(R/I)) + 1.$$
We will show that $\sdeg I^n$ is an asymptotically linear function.
This follows from a more general result on partial regularities.

Let $M$ be a finitely generated graded $R$-module. 
For $t \ge 0$, we define 
$$\reg_t M := \max\{a(H_\mm^i(M))+i|\ i \le t\}.$$
In particular, $\reg_t M = \reg M$ for $t \ge \dim M$. This invariant has been studied in \cite{Tr1,Tr2,Tr3}.
We will show that $\reg_t I^n$ is an asymptotically linear function.

Let $I = (f_1,...,f_p)$, where $f_1,...,f_p$ are graded elements of $R$ of degree $d_1,...,d_p$, respectively. 
Let $R[It] := \oplus_{n \ge 0}I^n t^n$ be the Rees ring of $I$.
Then we can represent $R[It]$ as a quotient ring of a bigraded polynomial ring $S := R[y_1, . . . ,y_v]$, where $\bideg f = (\deg f, 0)$ for $f \in R$ and $\bideg y_i = (d_i , 1)$, $i = 1, . . . , v$.  

Let $\M$ be a finitely generated bigraded $S$-module. For any number $n$ let 
$$\M_n := \oplus_{a \in \ZZ}\M_{(a,n)}.$$
Then $\M_n$ is a  finitely generated graded $R$-module. 
We will show that $\reg_t \M_n$ is asymptotically a linear function. 

We first consider the case $\M_n$ has finite length for all $n \gg 0$. In this case,
$\reg_t \M_n = \reg \M_n = a(\M_n)$.
It was showed by Trung and Wang \cite[Proposition 2.1]{TW} that $a(\M_n)$ is asymptotically a linear function. Using the same approach, we can improve their result by giving more information on the slope of this linear function. Note that $a(\M_n) = -\infty$ means $\M_n = 0$.

\begin{Lemma} \label{a}
Assume that $\M_n$ has finite length for $n \gg 0$. Then either $a(\M_n) = -\infty$ for $n \gg 0$ or $a(\M_n)$ is asymptotically a linear function with slope in $\{d_1,..., d_v\}$.
\end{Lemma}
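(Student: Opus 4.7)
The strategy is a bigraded prime filtration argument as in Trung--Wang \cite{TW}, but with finer tracking of the slope coming from each factor. Choose a filtration
$$
0 = \M_0 \subsetneq \M_1 \subsetneq \cdots \subsetneq \M_r = \M
$$
of bigraded $S$-submodules with $\M_i / \M_{i-1} \cong (S/P_i)(-\alpha_i, -\beta_i)$ for bigraded primes $P_i$ of $S$ and integer shifts $\alpha_i,\beta_i$. Since $\M \mapsto \M_n$ is exact, this induces a filtration of the graded $R$-module $\M_n$ with subquotients $(S/P_i)_{n - \beta_i}(-\alpha_i)$. Using the elementary fact that in any short exact sequence of graded modules $0\to N'\to N\to N''\to 0$ one has $a(N) = \max\{a(N'), a(N'')\}$, this yields
$$
a(\M_n) \;=\; \max_i \Bigl[\alpha_i + a\bigl((S/P_i)_{n - \beta_i}\bigr)\Bigr],
$$
reducing the task to computing $a((S/P)_n)$ for a single bigraded prime $P$ of $S$ (with $(S/P)_{n-\beta_i}$ inheriting finite length for $n\gg 0$).

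If every $y_j$ lies in $P$, then $(S/P)_n = 0$ for all $n \ge 1$, so this factor contributes only $-\infty$ asymptotically. Otherwise set $d^* := \max\{d_j : y_j \notin P\}$ and fix a witness $y_* = y_{j_0}$ realizing it. The key observation is that in the bigraded domain $S/P$, for any monomial $y^{\mathbf m}$ whose support avoids $P$, primality of $P$ forces $\ann_R(y^{\mathbf m}) = P \cap R$; combined with the finite-length hypothesis on $(S/P)_n$ this forces $R' := R/(P \cap R)$ to be Artinian, hence $a(R')$ is a finite non-negative integer. Now $(S/P)_n$ is generated as an $R$-module by those monomials $y^{\mathbf m}$ with $|\mathbf m| = n$ and support disjoint from $P$, and the submodule $R\cdot y^{\mathbf m}$ is isomorphic to the shift $R'(-\sum_j m_j d_j)$ with $\sum_j m_j d_j \le d^* n$; equality is realized by $\mathbf m = n \mathbf e_{j_0}$. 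Taking top degrees of the (not necessarily direct) sum of these submodules yields the exact formula
$$
a\bigl((S/P)_n\bigr) \;=\; d^* n + a(R')
$$
for all $n \ge 0$.

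Substituting back into the filtration maximum, each index $i$ contributes either a constantly $-\infty$ function or an exactly linear function $d^*_i(n - \beta_i) + a(R'_i) + \alpha_i$ with slope $d^*_i \in \{d_1, \ldots, d_v\}$. The maximum of finitely many such functions is then either eventually $-\infty$ or eventually linear with slope the largest contributing $d^*_i$, which lies in $\{d_1,\ldots,d_v\}$, as required. The main technical point to nail down is the annihilator computation and its consequence that $R/(P\cap R)$ must be Artinian whenever some $(S/P)_n$ has finite length as an $R$-module; once this is in hand the remaining top-degree accounting is purely combinatorial.
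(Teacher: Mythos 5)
Your proof is correct, but it takes a genuinely different route from the paper's. The paper follows Trung--Wang: it inducts on the number $v$ of Rees variables via the four-term exact sequence $0 \to (0_\M:y_v)_n \to \M_n \overset{y_v}\To \M_{n+1} \to (\M/y_v\M)_{n+1} \to 0$ with $d_v$ chosen maximal, and then runs a somewhat delicate case analysis (whether $a(\M_n)$ eventually equals $a((0_\M:y_v)_n)$ or differs from it for infinitely many $n$) to force $a(\M_{n+1}) = a(\M_n) + d_v$ for $n \gg 0$. You instead take a bigraded prime filtration and reduce to the cyclic case $\M = S/P$, where you obtain the exact closed formula $a((S/P)_n) = d^*n + a(R/(P\cap R))$ with $d^* = \max\{d_j : y_j \notin P\}$. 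The key steps are all correctly in place: primality gives $\ann_R(y^{\mathbf m}) = P\cap R$ for every monomial surviving in $S/P$; the finite-length hypothesis (inherited by the filtration subquotients for $n \gg 0$) then forces $R/(P\cap R)$ to be Artinian; and $a$ of a not-necessarily-direct sum of graded submodules is the maximum of the individual $a$'s, since nonvanishing in a fixed degree is detected on summands and on the terms of a short exact sequence. Your argument buys an explicit identification of the slope (the largest $d_j$ with $y_j$ outside the asymptotically dominant associated primes of $\M$) and replaces the infinitely-many-$m$ bookkeeping by a purely combinatorial maximization of $\sum_j m_j d_j$ over $|\mathbf m| = n$; the paper's induction avoids invoking the existence of bigraded prime filtrations with bigraded prime subquotients (standard here because the grading group $\ZZ^2$ is torsion-free, but worth a sentence if you write this up). One attribution quibble: Trung--Wang do not actually argue via prime filtrations, so your opening phrase ``as in Trung--Wang'' misdescribes their method even though your result refines theirs in the same way the paper does.
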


\begin{proof} 
We may assume that $\M_n \neq 0$ or, equivalently, $a(\M_n) \neq -\infty$ for $n \gg 0$.
Consider the exact sequence
$$0 \to (0_\M:y_v)_n \to \M_n \overset{y_v} \To \M_{n+1} \to (\M/y_v\M)_{n+1} \to 0.$$
Note that $0_\M:y_v$ and $\M/y_v\M$ are bigraded modules over $R[y_1,\ldots,y_{v-1}]$. 

If $v = 1$, $(0_\M:y_1)_n = 0$ and $(\M/y_1\M)_n = 0$ for $n \gg 0$. 
Hence $\M_n  \cong  \M_{n+1}(-d_1)$.
From this it follows that $a(\M_{n+1}) = a(\M_n) + d_1$ for $n \gg 0$. 
Therefore, $a(\M_n)$ is asymptotically a linear function with slope $d_1$. 

If $v > 1$, we may choose $v$ such that $d_v = \max\{d_i | i\ = 1, . . . , v\}.$ 
Using induction we may assume that $a((0_\M:y_v)_n)$ and $a((\M/y_v\M)_n)$ are asymptotically linear functions with slopes $\le d_v$. Then
\begin{align*}
a((0_\M:y_v)_n) + d_v & \ge a((0_\M:y_v)_{n+1}),\\
a(\M/y_v\M)_n) + d_v & \ge  a((\M/y_v\M)_{n+1}).
\end{align*}
for $n \gg 0$.

If $a(\M_n) = a((0_\M:y_v)_n)$ for $n \gg 0$, then $a((0_\M:y_v)_n) \neq -\infty$ for $n \gg 0$.
By induction, $a((0_\M:y_v)_n)$ is asymptotically a linear function with slope in $\{d_1,...,d_{v-1}\}$. 
Hence, $a(\M_n)$ is asymptotically a linear function with slope in $\{d_1,...,d_{v-1}\}$. 

It remains to consider the case that there exists an infinite sequence of
integers $m$ for which $a(\M_m) \neq a((0_\M:y_v)_m)$. 
By the above exact sequence, $a((0_\M:y_v)_n) \le a(\M_n)$ for all $n$. 
Hence $a((0_\M:y_v)_m) < a(\M_m)$. Therefore, we have an exact sequence
$$0  \to \M_{(a,m)} \overset{y_v} \To \M_{(a+d_v,m+1)} \to (\M/y_v\M)_{(a+d_v,m+1)} \to 0$$
for $a \ge a(\M_m)$.
From this it follows that 
$$a(\M_{m+1}) = \max\{a(\M_m) + d_v, a((\M/y_v\M)_{m+1})\}.$$ 

On the other hand, we have $$a(\M_n)+d_v \ge a(\M/y_v\M)_n + d_v \ge
a((\M/y_v\M)_{n+1})$$ for $n$ large enough. Since the above sequence of integers $m$ is infinite, there exists $m$ such that
$$a(\M_{m+1}) = a(\M_m) + d_v > a((0_\M:y_v)_m) + d_v \ge a((0_\M:y_v)_{m+1}).$$
Similarly as above, this implies
$$a(\M_{m+2}) = a(\M_{m+1}) + d_v >  a((0_\M:y_v)_{m+2})$$
and so on. So we get $a(\M_{n+1}) = a(\M_n) + d_v$ for $n \gg 0$.
Hence $a(\M_n)$ is asymptotically a linear function with slope $d_v$. 
\end{proof}

Note that $\reg_t \M_n = -\infty$ means $H_\mm^i(\M_n) = 0$ for $i = 0,...,t$.

\begin{Theorem} \label{partial1}
Let $\M$ be an arbitrary finitely generated bigraded $S$-module.
Then either $\reg_t \M_n = -\infty$ for $n \gg 0$ or $\reg_t \M_n$ is asymptotically a linear function with slope in
$\{d_1,..., d_v\}$. 
\end{Theorem}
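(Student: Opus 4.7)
The plan is to prove the theorem by induction on $t$, with the base case $t = 0$ reducing directly to Lemma~\ref{a} and the inductive step cutting with a generic $\M$-regular linear form from $R_1$ to pass from $t$ to $t-1$. For the base case, consider the $\mm S$-torsion submodule $\mathcal{N} := H^0_{\mm S}(\M)$ of $\M$; since $\M$ is finitely generated and $S$ is Noetherian, $\mathcal{N}$ is a finitely generated bigraded $S$-module, and directly from the definitions one has $\mathcal{N}_n = H_\mm^0(\M_n)$, which has finite length over $R$. Thus $\reg_0 \M_n = a(\mathcal{N}_n)$, and Lemma~\ref{a} applied to $\mathcal{N}$ yields the desired dichotomy.

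For $t \ge 1$, I first replace $\M$ by $\M/\mathcal{N}$: because $\mathcal{N}_n$ has finite length, $H_\mm^i(\mathcal{N}_n) = 0$ for $i \ge 1$, and the long exact sequence attached to $0 \to \mathcal{N}_n \to \M_n \to (\M/\mathcal{N})_n \to 0$ gives $H_\mm^i((\M/\mathcal{N})_n) \cong H_\mm^i(\M_n)$ for $i \ge 1$. Hence $\reg_t \M_n = \max\{\reg_0 \M_n,\,\reg_t(\M/\mathcal{N})_n\}$, and the base case handles the first term, so it suffices to treat the quotient. I may therefore assume $H^0_{\mm S}(\M) = 0$, meaning no associated prime of $\M$ in $S$ contains $\mm S$. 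After harmlessly extending $k$ to an infinite field (which preserves all relevant invariants by faithful flatness), bigraded prime avoidance produces a linear form $x \in R_1$ (of bidegree $(1,0)$ in $S$) which is a non-zero-divisor on $\M$. The short exact sequence
$$0 \to \M(-1, 0) \xrightarrow{\cdot x} \M \to \M/x\M \to 0$$
remains exact on each bigraded component, and I claim that the long exact sequence of $\mm$-local cohomology yields the identity
$$\reg_t \M_n = \reg_{t-1}(\M/x\M)_n.$$
Since $\M/x\M$ is again a finitely generated bigraded $S$-module, applying the induction hypothesis to it completes the proof.

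The main technical point is verifying this identity. The ``$\le$'' direction exploits the fact that $H_\mm^{i+1}(\M_n)$ is $\mm$-torsion: the top-degree piece of $H_\mm^{i+1}(\M_n)(-1)$ is automatically annihilated by $x$, and the connecting homomorphism thus injects it into $H_\mm^i((\M/x\M)_n)$, giving $a(H_\mm^{i+1}(\M_n)) + 1 \le a(H_\mm^i((\M/x\M)_n))$ whenever $H_\mm^{i+1}(\M_n) \neq 0$; summing the shifts produces $\reg_t \M_n \le \reg_{t-1}(\M/x\M)_n$. The reverse inequality follows from the complementary long-exact-sequence bound $a(H_\mm^i((\M/x\M)_n)) \le \max\{a(H_\mm^i(\M_n)),\, a(H_\mm^{i+1}(\M_n))+1\}$ read off from the same sequence; here the reduction $H^0_{\mm S}(\M) = 0$, which forces $a(H_\mm^0(\M_n)) = -\infty$, is precisely what prevents a stray term at $i = 0$ from inflating $\reg_{t-1}(\M/x\M)_n$ beyond $\reg_t \M_n$.
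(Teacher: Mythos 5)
Your proof is correct, but it follows a genuinely different route from the paper's. The paper does not induct on $t$: it invokes the filter-regular characterization of partial regularity from \cite[Proposition 2.2]{Tr1}, producing a single sequence $z_1,\dots,z_{t+1}\in R$ (valid for all $\M_n$ simultaneously by \cite[Lemma 1.2]{TW}) such that $\reg_t\M_n=\max\{a(\M^i_n)\mid i=0,\dots,t\}$ with $\M^i=(z_1,\dots,z_i)\M:z_{i+1}/(z_1,\dots,z_i)\M$, and then applies Lemma \ref{a} to each of the finitely many bigraded modules $\M^i$ at once. Your argument replaces that citation with a self-contained induction: peel off $\mathcal N=H^0_{\mm S}(\M)$ (correctly identifying $\mathcal N_n=H^0_\mm(\M_n)$, so Lemma \ref{a} handles $\reg_0$), then cut by a genuine bigraded non-zero-divisor $x\in R_1$ and prove $\reg_t\M_n=\reg_{t-1}(\M/x\M)_n$ from the two long-exact-sequence inequalities. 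Both approaches ultimately rest on Lemma \ref{a} applied to finite-length layers, and your $\mathcal N$-plus-regular-element step is essentially what a filter-regular element packages in one move; what your version buys is independence from the external characterization (at the cost of redoing the torsion reduction at every stage of the induction), while the paper's version is shorter because the cited result already delivers the finite-length decomposition uniformly in $n$. Two small points to tidy: the connecting homomorphism $H^i_\mm((\M/x\M)_n)\to H^{i+1}_\mm(\M_n)(-1)$ \emph{surjects onto} the kernel of multiplication by $x$ rather than receiving an injection from it (the degree conclusion $a(H^i_\mm((\M/x\M)_n))\ge a(H^{i+1}_\mm(\M_n))+1$ is unaffected), and you should note explicitly that a maximum of finitely many asymptotically linear functions with slopes in $\{d_1,\dots,d_v\}$ (or eventually $-\infty$) is again of that form, which is needed both after splitting off $\mathcal N$ and implicitly in the base case.
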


\begin{proof} 
We only need to consider the case $t \le \dim R$. Without restriction assume that the base field $k$ is infinite. 
We will use a characterization of $\reg_t \M_n$ by means of a filter-regular sequence \cite[Proposition 2.2]{Tr1},.
This characterization was originally proved for $\reg_t R$ but the proof can be easily extended to the module case. According to this characterization, there exists a sequence of elements $z_1,...,z_{t+1} \in R$ such that
$(z_1,\ldots,z_i)\M_n:z_{i+1}/(z_1,\ldots,z_i)\M_n$ has finite length and
$$\reg_t \M_n =\max\big\{a\big((z_1,\ldots,z_i)\M_n:z_{i+1}/
(z_1,\ldots,z_i)\M_n\big)|\ i = 0,\ldots,t\big\}, $$
where $(z_1,\ldots,z_0) := 0$. The existence of such a sequence for all $n$ follows from the fact that we may assume that $\mm$ is generated by a filter-sequence for all $\M_n$, whose length is at least $\dim R$ \cite[Lemma 1.2]{TW}.

Let $\M^i = (z_1,\ldots,z_i)\M:z_{i+1}/(z_1,\ldots,z_i)\M$. 
By Lemma \ref{a}, either $a(\M^i_n) = -\infty$ or $a(\M^i_n)$ is asymptotically a linear function with slope $\delta_i \in \{d_1,...,d_n\}$. 
Since 
$$\reg_t \M_n = \max\{a(\M^i_n)|\ i = 0,..., t\},$$ 
either $\reg_t \M_n = -\infty$ or $\reg_t \M_n$ is asymptotically a linear function with slope in $\{d_1,...,d_v\}$.
\end{proof}

If $\M = R[It]$, then $\M_n \cong I^n$ for all $n$. Hence we immediately obtain from Theorem \ref{partial1} the following result on the function $\reg_t I^n$.

\begin{Theorem} \label{partial2}
Let $I$ be an ideal generated by forms of degree $d_1,...,d_p$. Then either $\reg_t I^n = -\infty$ for $n \gg 0$ or $\reg_t I^n$ is asymptotically a linear function with slope in $\{d_1,...,d_p\}$.
\end{Theorem}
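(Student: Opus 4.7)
The plan is to derive Theorem~\ref{partial2} directly from Theorem~\ref{partial1} by realizing the family $\{I^n\}_{n \ge 0}$ as the graded components of a single finitely generated bigraded $S$-module. Concretely, I would take $v = p$ and set $S = R[y_1,\ldots,y_p]$ with $\bideg y_i = (d_i,1)$ as in the paragraph preceding Lemma~\ref{a}, and view the Rees algebra $R[It] = \bigoplus_{n \ge 0} I^n t^n$ as a bigraded $S$-module via the surjection $S \twoheadrightarrow R[It]$ sending $y_i \mapsto f_i t$. Under this bigrading, an element of bidegree $(a,n)$ in $R[It]$ is precisely a form of degree $a$ in $I^n$, so $(R[It])_n \cong I^n$ as graded $R$-modules for every $n \ge 0$.

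Next I would check that $\M := R[It]$ satisfies the hypotheses of Theorem~\ref{partial1}. Since $\M$ is a cyclic $S$-module (the image of the surjection above), it is finitely generated and bigraded, with the correct grading conventions. Applying Theorem~\ref{partial1} then yields the dichotomy: either $\reg_t \M_n = -\infty$ for $n \gg 0$, or $\reg_t \M_n$ is asymptotically linear with slope in $\{d_1,\ldots,d_p\}$. Translating through the identification $\M_n \cong I^n$ gives exactly the statement of Theorem~\ref{partial2}.

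There is essentially no obstacle beyond verifying that the bigraded module structure on $R[It]$ is set up correctly so that the $n$-th slice recovers $I^n$ as a graded $R$-module with its natural grading (and not a twist of it). Once the identification $(R[It])_{(a,n)} = (I^n)_a$ is recorded, Theorem~\ref{partial1} does all the work, and no further argument is required.
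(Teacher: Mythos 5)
Your proposal is correct and is essentially the paper's own argument: the paper likewise observes that for $\M = R[It]$ one has $\M_n \cong I^n$ for all $n$, and then invokes Theorem~\ref{partial1} directly. Your additional care in checking that the bigraded slice recovers $I^n$ with its natural (untwisted) grading is exactly the right point to verify, and nothing more is needed.
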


For convenience, we will consider a function $f(n) = -\infty$ for $n \gg 0$ as an asymptotically linear function with slope 0. \smallskip

The saturation degree $\sdeg I^n$ is related to the partial regularity $\reg_1 I^n$ by the following observation.

\begin{Lemma} \label{relation}
~ \par
{\rm (1)} If $H_\mm^1(I^n) = 0$ for $n \gg 0$, then $\sdeg I^n = a(H_\mm^0(R))+1$ for $n \gg 0$. \par
{\rm (2)} If $H_\mm^1(I^n) \neq 0$ for $n \gg 0$, then $\sdeg I^n = \reg_1 I^n$ for $n \gg 0$.
\end{Lemma}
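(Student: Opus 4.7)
The plan is to derive both parts from the long exact sequence of local cohomology associated with $0 \to I^n \to R \to R/I^n \to 0$, namely
$$
0 \to H_\mm^0(I^n) \to H_\mm^0(R) \to H_\mm^0(R/I^n) \overset{\delta_n}{\to} H_\mm^1(I^n) \to H_\mm^1(R) \to \cdots,
$$
together with the identity $\sdeg I^n = a(H_\mm^0(R/I^n))+1$ recalled at the start of the section. A preliminary observation that I would establish once and reuse is that $H_\mm^0(I^n)=0$ for $n\gg 0$: indeed, $H_\mm^0(I^n) = I^n \cap H_\mm^0(R)$ is a submodule of the finite length module $H_\mm^0(R)$, so it is concentrated in bounded degree, while every element of $I^n$ has degree at least $n\cdot\min_i d_i$, which eventually exceeds $a(H_\mm^0(R))$.

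For part (1), the hypothesis $H_\mm^1(I^n)=0$ for $n\gg 0$ collapses the above sequence to $0 \to H_\mm^0(R) \to H_\mm^0(R/I^n) \to 0$, so $H_\mm^0(R/I^n)\cong H_\mm^0(R)$ and hence $\sdeg I^n = a(H_\mm^0(R))+1$ for $n\gg 0$ (with the convention that this reads $-\infty$ if $H_\mm^0(R)=0$).

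For part (2), the key step is to pin down the range of degrees in which $\delta_n$ is an isomorphism. In degrees strictly greater than $a(H_\mm^0(R))$ the image of $H_\mm^0(R) \to H_\mm^0(R/I^n)$ vanishes, so $\delta_n$ is injective; in degrees strictly greater than $a(H_\mm^1(R))$ the target $H_\mm^1(R)$ vanishes, so $\delta_n$ is surjective. Consequently $\delta_n$ is an isomorphism in every degree above the fixed threshold $T := \max\{a(H_\mm^0(R)), a(H_\mm^1(R))\}$, which does not depend on $n$. Now the assumption $H_\mm^1(I^n)\ne 0$ for $n\gg 0$ combined with $H_\mm^0(I^n)=0$ gives $\reg_1 I^n = a(H_\mm^1(I^n))+1$; by Theorem \ref{partial2}, $\reg_1 I^n$ is asymptotically linear with slope in $\{d_1,\ldots,d_p\}$, hence tends to infinity. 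Thus $a(H_\mm^1(I^n))>T$ for $n\gg 0$, and the degree-wise isomorphism above $T$ forces $a(H_\mm^0(R/I^n)) = a(H_\mm^1(I^n))$, yielding $\sdeg I^n = \reg_1 I^n$ for $n\gg 0$.

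The main obstacle, and the reason for separating the two cases, is that the boundary terms $H_\mm^0(R)$ and $H_\mm^1(R)$ can a priori mask the top-degree contribution of $H_\mm^0(R/I^n)$; the argument in (2) works precisely because Theorem \ref{partial2} guarantees that $a(H_\mm^1(I^n))$ eventually dominates both of these fixed quantities, at which point $\sdeg I^n$ can be read off from $H_\mm^1(I^n)$ alone.
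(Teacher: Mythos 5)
Your proposal is correct and follows essentially the same route as the paper: the same long exact sequence of local cohomology, the same vanishing of $H_\mm^0(I^n)$ for $n\gg 0$, and the same use of Theorem \ref{partial2} to show that $a(H_\mm^1(I^n))$ eventually exceeds $\max\{a(H_\mm^0(R)),a(H_\mm^1(R))\}$. Your degree-wise analysis of the connecting map merely makes explicit the step the paper summarizes as ``the above derived exact sequence implies $a(H_\mm^0(R/I^n))=a(H_\mm^1(I^n))$.''
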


\begin{proof} 
From the short exact sequence $0 \to I^n \to R \to R/I^n \to 0$ we get the derived exact sequence
$$0 \to H_\mm^0(I^n) \to H_\mm^0(R) \to H_\mm^0(R/I^n) \to H_\mm^1(I^n) \to H_\mm^1(R).$$
Let $U$ be the saturation of the zero ideal of $R$. Then
$H_\mm^0(I^n) = U \cap I^n = 0$ for $n \gg 0$.

If $H_\mm^1(I^n) = 0$ for $n \gg 0$, then $H_\mm^0(R) \cong H_\mm^0(R/I^n)$. 
Hence, 
$$\sdeg I^n = a(H_\mm^0(R/I^n))+1 = a(H_\mm^0(R))+1$$ 
for $n \gg 0$.

If $H_\mm^1(I^n) \neq 0$ for $n \gg 0$, then $\reg_1 I^n \neq -\infty$ for $n \gg 0$.
By Theorem \ref{partial2}, $\reg_1 I^n$ is asymptotically a linear function. 
For $n \gg 0$, we have
$$\reg_1 I^n = \max\{a_0(H_\mm^0(I^n)),a(H_\mm^1(I^n))+1\} = a(H_\mm^1(I^n))+1.$$ 
From this it follows that $a(H_\mm^1(I^n))$ is asymptotically a linear function. Hence
$$a(H_\mm^1(I^n)) > \max\{a(H_\mm^0(R),a(H_\mm^1(R))\}$$
for $n \gg 0$. 
Therefore, the above derived exact sequence implies $a(H_\mm^0(R/I^n)) = a(H_\mm^1(I^n))$ for $n \gg 0$. So we obtain
$$\sdeg I^n = a(H_\mm^0(R/I^n)) +1= a(H_\mm^1(I^n))+1 = \reg_1 I^n$$
for $n \gg 0$.
\end{proof}

In Lemma \ref{relation}(1), we may consider $\sdeg I^n$ as an asymptotically linear function of slope 0 if $a(H_\mm^0(R)) = -\infty$.
Applying Lemma \ref{relation} we immediately obtain from Theorem \ref{partial2} the following result.

\begin{Theorem} \label{sdeg}
Let $I$ be an ideal generated by forms of degree $d_1,...,d_p$. Then $\sdeg I^n$ is asymptotically a linear function with slope in $\{0,d_1,...,d_p\}$.
\end{Theorem}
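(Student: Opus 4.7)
The plan is to derive Theorem \ref{sdeg} as an essentially immediate consequence of Lemma \ref{relation} and Theorem \ref{partial2}; no new ideas beyond those two results are required. I would structure the argument as a dichotomy on the eventual vanishing of $H_\mm^1(I^n)$, matching the two cases of Lemma \ref{relation} verbatim.

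In the first case, where $H_\mm^1(I^n) = 0$ for $n \gg 0$, Lemma \ref{relation}(1) gives $\sdeg I^n = a(H_\mm^0(R)) + 1$ for $n \gg 0$. This is a constant (possibly $-\infty$ if $R$ has no $\mm$-torsion), and by the convention made just after Theorem \ref{partial2}, a function that is eventually equal to such a constant counts as asymptotically linear with slope $0$. This produces the slope $0$ in the target set $\{0, d_1, \ldots, d_p\}$.

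In the second case, where $H_\mm^1(I^n) \neq 0$ for $n \gg 0$, Lemma \ref{relation}(2) identifies $\sdeg I^n$ with $\reg_1 I^n$ for $n \gg 0$. I would then apply Theorem \ref{partial2} with $t = 1$ to $\M = R[It]$, whose degree-$n$ component is $I^n$. The hypothesis $H_\mm^1(I^n) \neq 0$ forces $\reg_1 I^n \neq -\infty$ for $n \gg 0$, so Theorem \ref{partial2} delivers the stronger alternative: $\reg_1 I^n$ is asymptotically linear with slope in $\{d_1, \ldots, d_p\}$. Combining the two cases yields an overall slope in $\{0, d_1, \ldots, d_p\}$, as claimed.

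The only mild obstacle is bookkeeping: one must uniformly handle the $-\infty$ convention for both $\sdeg$ and $\reg_t$, and confirm that Lemma \ref{relation}'s dichotomy is genuinely exhaustive. Since the lemma is stated precisely as the two mutually exclusive alternatives ``$H_\mm^1(I^n) = 0$ for $n \gg 0$'' versus ``$H_\mm^1(I^n) \neq 0$ for $n \gg 0$'' --- and the asymptotic linearity of $a(H_\mm^1(I^n))$ guarantees no intermediate oscillation --- this dichotomy is clean, so the proof reduces to a short case analysis.
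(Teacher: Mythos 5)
Your proof is correct and is essentially the paper's own argument: the theorem is stated there as an immediate consequence of Lemma \ref{relation} and Theorem \ref{partial2}, with exactly the same dichotomy on the eventual vanishing of $H_\mm^1(I^n)$ and the same slope-$0$ convention for the constant case. Your added remark that the dichotomy is exhaustive (via Theorem \ref{partial2} applied to $\reg_1 I^n$, together with $H_\mm^0(I^n)=0$ for $n\gg 0$) is a correct and worthwhile bit of bookkeeping that the paper leaves implicit.
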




Ein, H\`a and Lazarsfeld \cite[Theorem A]{EHL} recently proved that if $R = \CC[x_0,...,x_r]$ is a polynomial ring over the complex numbers and $I = (f_0,...,f_p)$ an ideal generated by forms of degree $d_0 \ge \cdots \ge d_p$ such that the projective scheme cut out by $f_0,...,f_p$ is nonsingular, then 
$$\sdeg I^n \le d_0 n + d_1 + \cdots + d_r - r$$
for all $n \ge 1$. They asked whether this bound is best possible \cite[p.~1532]{EHL}. 
Theorem \ref{sdeg} suggests that this bound is not asymptotically optimal. In fact, the following example gives such an ideal $I$ for which $\sdeg I^n$ is a linear function for all $n \ge 1$ with slope and intercept arbitrarily less than those of this bound.

\begin{Example} \label{EHL}
Let $R = k[x_0,...,x_r]$, $Q = (x_0^2,...,x_r^2,x_0 \cdots x_r)$, and $I =  lQ$, where $l$ is an arbitrary linear form. Then the projective scheme cut out by the generators of $I$ is nonsingular. By the above result of Ein, H\`a and Lazarsfeld we have 
$$\sdeg I^n \le (r+2)n + 2r +3 \text{ for all } n \ge 1.$$ 
The function $\sdeg I^n$ can be computed explicitly. Since $\widetilde{I^n} = (l^n)$, we have
$$\widetilde I^n/I^n = l^n/l^nQ^n \cong (R/Q^n)(-n).$$ 
From this it follows that $\sdeg I^n = a(R/Q^n)+n+1.$
 Let $f$ be a monomial which has maximal degree among the monomials not contained in $Q^n$.
 We can always write $f = gh$, where $g$ is a product of the monomials $x_0^2,...,x_r^2$ and $h$ is a squarefree monomial.
 Since $fx_i \in Q^n$ for all $i = 0,...,r$, we must have $g \in (x_0^2,...,x_r^2)^{n-1} \subset Q^{n-1}.$ 
 Since $gh \not\in Q^n$, $\deg g = 2(n-1)$ and $h \not\in Q$. 
 Since $h$ has maximal degree among the squarefree monomials not contained in $Q$, $h$ is a product of $r$ variables. Hence $\deg f = 2(n-1) + r$. Therefore, $a(R/Q^n) = 2(n-1)+r$. So we get
 $$\sdeg I^n = 3n + r -1 \text{ for all } n \ge 1. $$
\end{Example}

In the remaining part of this paper, we address the problem which sequence of non-negative integers is the function $\sdeg I^n$ of an equigenerated ideal $I$. 


\begin{Proposition} \label{intercept}
Let $I$ be an ideal generated by forms of degree $d$. Assume that $\sdeg I^n$ is not a constant for $n\gg 0$.
Then there exists a number $b \ge 0$ such that $\sdeg I^n = dn + b$ for $n \gg 0$.
\end{Proposition}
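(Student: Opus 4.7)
The plan is to combine Theorem \ref{sdeg} with an ideal-theoretic argument that traps low-degree saturation elements inside $\ann_R(\mm) \subseteq H_\mm^0(R)$.

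By Theorem \ref{sdeg}, $\sdeg I^n$ is asymptotically linear with slope in $\{0, d\}$. Since the function is integer-valued and, by hypothesis, not eventually constant, the slope must be $d$, so there exists an integer $b$ with $\sdeg I^n = dn + b$ for $n \gg 0$. It remains to prove $b \ge 0$.

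I would establish the uniform upper bound: \emph{whenever $\sdeg I^n < dn$, one has $\sdeg I^n \le a(H_\mm^0(R)) + 1$.} Granting this, if $b < 0$ then $\sdeg I^n < dn$ for all $n \gg 0$, forcing $dn + b \le a(H_\mm^0(R)) + 1$ for all such $n$, which contradicts $d \ge 1$. Hence $b \ge 0$.

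To prove the bound, set $t = \sdeg I^n$ and suppose $t < dn$. By the definition of saturation degree, $\widetilde{I^n}_s = I^n_s$ for every $s \ge t$; since $I$ is generated in degree $d$ we have $I^n \subseteq R_{\ge dn}$, and this forces $\widetilde{I^n}_s = 0$ for $t \le s \le dn - 1$. In particular $\widetilde{I^n}_t = 0$. On the other hand, $\widetilde{I^n}_{t-1} \neq I^n_{t-1} = 0$ by the minimality of $t$, so we may pick a nonzero $f \in \widetilde{I^n}_{t-1}$. Because $\widetilde{I^n}$ is an ideal of $R$, $f \cdot R_1 \subseteq \widetilde{I^n}_t = 0$; since $R$ is standard graded, $\mm$ is generated by $R_1$ as an ideal, so $f \mm = 0$ and thus $f \in H_\mm^0(R)_{t-1}$. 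Therefore $a(H_\mm^0(R)) \ge t - 1$, which gives $\sdeg I^n = t \le a(H_\mm^0(R)) + 1$, as required. The main obstacle is lower-bounding $\sdeg I^n$ directly; the trick is to exploit the ``gap'' between $t$ and $dn$ to force the saturation witness into the $n$-independent module $\ann_R(\mm)$, whose $a$-invariant is intrinsic to $R$.
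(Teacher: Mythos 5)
Your proof is correct, and it takes a genuinely different route from the paper's. The paper first invokes Lemma \ref{relation} to identify $\sdeg I^n$ with $\reg_1 I^n$ for $n\gg 0$ (the non-constant hypothesis rules out case (1) of that lemma), and then lower-bounds $\reg_1 I^n$ by $dn$ via the filter-regular characterization of partial regularity: the relevant colon modules $(0:\mm)\cap I^n$ and $(xI^n:\mm)\cap I^n/xI^n$ live inside $I^n$ or a quotient thereof, so any nonzero graded element has degree at least $dn$. You instead argue directly on $\widetilde{I^n}/I^n$: if $t=\sdeg I^n<dn$, then $\widetilde{I^n}$ vanishes in degrees $t,\dots,dn-1$, so a top-degree witness $f\in\widetilde{I^n}_{t-1}\setminus\{0\}$ satisfies $fR_1\subseteq\widetilde{I^n}_t=0$, hence $f\in(0:_R\mm)\subseteq H_\mm^0(R)$, giving the $n$-independent bound $\sdeg I^n\le a(H_\mm^0(R))+1$; this is incompatible with $\sdeg I^n=dn+b\to\infty$ when $b<0$. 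Your argument is more elementary (no local cohomology of $I^n$, no filter-regular sequences, no appeal to Lemma \ref{relation}) and yields the slightly stronger dichotomy that for \emph{every} $n$ either $\sdeg I^n\ge dn$ or $\sdeg I^n\le a(H_\mm^0(R))+1$, whereas the paper's route gives the pointwise inequality $\sdeg I^n\ge dn$ for all $n\gg 0$ directly. Both proofs rest on Theorem \ref{sdeg} for the asymptotic linearity with slope $d$. The only points worth making explicit are that a finite saturation degree is automatically $\ge 1$ (so $\deg f=t-1\ge 0$), and that the bound is vacuously true when $\sdeg I^n=-\infty$; neither affects the argument.
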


\begin{proof}
By Theorem \ref{sdeg},  $\sdeg I^n$ is asymptotically a linear function with slope $d$.
It remains to show that $\sdeg I^n \ge dn$ for $n \gg 0$.

By Lemma \ref{relation}, $\sdeg I^n = \reg_1 I^n$ for $n \gg 0$.
As in the proof of Theorem \ref{partial1}, we may assume that there exists a linear form $x$ 
such that 
$$\reg_1 I^n = \max\big\{a((0:\mm)\cap I^n), a((xI^n : \mm)\cap I^n/xI^n)\big\}.$$
Since $H_\mm^1(I^n) \neq 0$ by Lemma \ref{relation}, $\reg_1 I^n \neq -\infty$. 
Hence $(0:\mm)\cap I^n \neq 0$ or 
$(xI^n : \mm)\cap I^n/xI^n \neq 0$. 
Since $I^n$ is generated by forms of degree $d n$, $a((0:\mm)\cap I^n) \ge dn$ or
$a((xI^n : \mm)\cap I^n/xI^n) \ge dn$. Therefore, $\reg_1 I^n \ge dn$. 
\end{proof}

With regard to Proposition \ref{intercept} we set $b_n := \sdeg I^n - dn$ for all $n \ge 1$. 
We call $\{b_n\}$ the defect sequence of the function $\sdeg I^n$. 



The number $b_n$ may be negative as shown by the following example.

\begin{Example} 
Let $S = k[x_0,...,x_{2t+1}]$, $Q = x_0(x_0,...,x_{2t+1})$ and
$$P = (x_1x_2,x_2x_3,...,x_{2t}x_{2t+1},x_{2t+1}x_1).$$
Let $R = S/Q$ and $I = (P,Q)/Q.$ Since $P$ is the edge ideal of a $(2t+1)$-cycle, we know by \cite[Example 3.4 and Theorem 3.5]{LT} that
$\depth k[x_1,...,x_{2t+1}]/P^n > 0$ for $n \le t$. From this it follows that 
$\widetilde{(P^n,Q)} = (x_0,P^n)$ for $n \le t$. Hence
$$\widetilde{I^n}/I^n =  (x_0,P^n)/(P^n,Q) \cong (x_0)/(x_0) \cap (P^n,Q) \cong (R/\mm)(-1).$$
Therefore, $\sdeg I^n = a(\widetilde{I^n}/I^n) + 1 = 2$ for $n \le t$. 
Since $d = 2$, we get $b_n = \sdeg I^n - 2n = 2-2n < 0$ for $n = 2,...,t$.
\par

For $n > t$, we observe that $x_1 \cdots x_{2t+1}(x_1x_2)^{n-t-1} \not\in P^n$ because all monomials of $P^n$ has degree $\ge 2n$. Therefore, $x_1 \cdots x_{2t+1}(x_1x_2)^{n-t-1} \not\in(P^n,Q)$ because all monomials of $Q$ contain $x_0$. It is easy to check that $x_1 \cdots x_{2t+1}(x_0,...,x_{2t+1}) \in (P,Q)$. Hence, $x_1 \cdots x_{2t+1}(x_1x_2)^{n-t-1} \in (P^n,Q)$. Therefore, $x_1 \cdots x_{2t+1}(x_1,x_{2t+1})^{n-t-1} \in \widetilde{(P^n,Q)}$.
From this it follows that $\widetilde{(P^n,Q)}/(P^n,Q)$ contains an element of degree $2n-1$. Therefore,  $\sdeg I^n \ge 2n$ for $n > t$. Hence, $b_n = \sdeg I^n - 2n \ge 0$ for $n > t$. 
\end{Example}

If $\dim R/I = 0$, $\sdeg I^n = \reg R/I^n+1$ for all $n \ge 1$. Hence, $\{b_n\}$ is the defect sequence of the function $\reg R/I^n$. By Proposition \ref{non-negative2}, $b_n \ge 0$ for all $n \ge 1$. 
By Theorem \ref{characterization2}, we get the following characterization for the defect sequence of the function $\sdeg I^n$ in the case $\dim R/I = 0$. 

\begin{Theorem}
A sequence of non-negative integers $\{b_n\}$ is the defect sequence of the function $\sdeg R/I^n$ of an ideal $I$ generated by forms of degree $d$ with $\dim R/I = 0$ if and only if it is weakly decreasing and $b_n - b_{n+1} \le d$ for all $n \ge 1$. 
\end{Theorem}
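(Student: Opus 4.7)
The plan is to reduce this characterization directly to Theorem \ref{characterization2} via the identity $\sdeg I^n = \reg R/I^n + 1$, which holds whenever $\dim R/I = 0$.

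First I would verify this identity. Since $I$ is $\mm$-primary, so is $I^n$, so $\mm^t \subseteq I^n$ for some $t$, which forces $\widetilde{I^n} = \bigcup_{t \ge 0} I^n : \mm^t = R$ and hence $\widetilde{I^n}/I^n = R/I^n$. The module $R/I^n$ is Artinian, so $H_\mm^0(R/I^n) = R/I^n$ and $H_\mm^i(R/I^n) = 0$ for $i \ge 1$. Therefore
$$\sdeg I^n = a(H_\mm^0(R/I^n)) + 1 = a(R/I^n) + 1 = \reg R/I^n + 1$$
for every $n \ge 1$.

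Substituting into the definitions of the defect sequences yields
$$b_n = \sdeg I^n - dn = \reg R/I^n - dn + 1 = a_n$$
for all $n \ge 1$, where $\{a_n\}$ is the defect sequence of the function $\reg R/I^n$. Consequently, a sequence of non-negative integers $\{b_n\}$ arises as the defect sequence of $\sdeg I^n$ for some ideal $I$ generated by forms of degree $d$ with $\dim R/I = 0$ if and only if it arises as the defect sequence $\{a_n\}$ of $\reg R/I^n$ for some such ideal.

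Applying Theorem \ref{characterization2} then immediately gives both directions: $\{b_n\}$ must be weakly decreasing with $b_n - b_{n+1} \le d$, and conversely any such sequence is realized by an equigenerated $I$ with $\dim R/I = 0$ produced by the construction underlying Theorem \ref{characterization2}. No serious obstacle is expected, since the combinatorial content and the explicit constructions were already carried out in the proof of Theorem \ref{characterization2}; the present statement is a reformulation via the above identification.
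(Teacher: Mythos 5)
Your proposal is correct and follows essentially the same route as the paper, which likewise derives this theorem from Theorem \ref{characterization2} via the identity $\sdeg I^n = \reg R/I^n + 1$ valid when $\dim R/I = 0$ (so that $\widetilde{I^n}/I^n = H_\mm^0(R/I^n) = R/I^n$ and $b_n = a_n$). Your verification of that identity is accurate, and the realizability direction is indeed supplied by the construction already used for Theorem \ref{characterization2}, which produces an equigenerated ideal with $\dim R/I = 0$.
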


It remains to consider the case $\dim R/I \ge 1$. The following result shows that the defect sequence of the function $\sdeg I^n$ can fluctuate arbitrarily.

\begin{Theorem} \label{sdeg2}
The defect sequence of the function $\sdeg I^n$ of an ideal $I$ generated by forms of degree $d$ with $\dim R/I \ge 1$ can be any convergent sequence of non-negative integers $\{b_n\}$ with the property $b_n - b_{n+1}\le d$ for all $n \ge 1$. 
\end{Theorem}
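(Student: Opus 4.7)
The plan is to adapt the construction of Proposition \ref{construction3} and translate its regularity statement into one about the saturation degree. The key observation is that in the proof of Proposition \ref{construction3}, the zeroth local cohomology $H_\mm^0(R/I^n) = Q^*/(Q,P^{dn})$ is computed explicitly. Since this module is Artinian, its $a$-invariant coincides with its Castelnuovo--Mumford regularity, so $\sdeg I^n = a(H_\mm^0(R/I^n)) + 1 = \reg Q^*/(Q,P^{dn}) + 1$ can be read off directly from that earlier computation.

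Given $\{b_n\}_{n \ge 1}$ as in the statement, I would set $c_n := b_{n+1}+1$ for $n \ge 0$, producing a convergent sequence of positive integers which, by hypothesis, satisfies $c_{n-1} - c_n \le d$. Let $m$ be the least integer with $c_n = c_m$ for $n > m$, and form the ring $R = S/Q$ and the ideal $I = (P^d+Q)/Q$ exactly as in Proposition \ref{construction3}, so that $\dim R/I = 1$. The telescoping inequality $c_i + d(n-1-i) \ge c_{n-1}$, obtained by iterating $c_{j-1}-c_j \le d$, implies $d(i+1)+c_i \le dn + c_{n-1}$ for $0 \le i \le n-1$. Combined with the regularity formula established in the proof of Proposition \ref{construction3}, this shows that the maximum in that formula is attained at $i = n-1$ when $n \le m$, and by the term $dn + c_m - 2$ when $n > m$ (using $b_n = b_{m+1}$ for such $n$). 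Both cases yield $\reg Q^*/(Q,P^{dn}) = dn + c_{n-1} - 2 = dn + b_n - 1$, hence $\sdeg I^n = dn + b_n$ for every $n \ge 1$. The case $\dim R/I > 1$ follows by adjoining additional indeterminates to $R$, which affects neither $H_\mm^0(R/I^n)$ nor the saturation degree of $I^n$.

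I do not foresee a substantial obstacle, because the ring and ideal are already built in Proposition \ref{construction3}, and the saturation degree is encoded in the same Artinian module whose regularity was computed there. The proof therefore reduces to an indexing bookkeeping together with the elementary telescoping argument above.
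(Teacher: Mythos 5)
Your proposal is essentially the paper's own proof: the same substitution $c_n := b_{n+1}+1$, the same ideal from Proposition \ref{construction3}, the same identification $\sdeg I^n = \reg Q^*/(Q,P^{dn})+1$ via the Artinian module $H_\mm^0(R/I^n)=\widetilde{I^n}/I^n$, and the same telescoping simplification of the max. Two small corrections: the inequality you actually get by iterating $c_{j-1}-c_j\le d$ is $c_i \le c_{n-1}+d(n-1-i)$ (you wrote it with the roles of $c_i$ and $c_{n-1}$ reversed), and this is the one that yields $d(i+1)+c_i\le dn+c_{n-1}$; and your final sentence is wrong as stated, since adjoining a new indeterminate $z$ makes $z$ a nonzerodivisor on $R[z]/I^nR[z]$ and hence kills $H_\mm^0$ and the saturation degree --- but that step is unnecessary, because the dimension-one construction already satisfies $\dim R/I\ge 1$ as the theorem requires.
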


\begin{proof} 
Let $c_n = b_{n+1}+1$ for $n \ge 0$. 
Consider the ideal $I$ in Proposition \ref{construction3} associated with the sequence $\{c_n\}_{n \ge 0}$.
By the proof of Proposition \ref{construction3},  
$\widetilde{I^n}/I^n = Q^*/(Q,P^{dn})$ and
$$\reg Q^*/(Q,P^{dn}) = \left\{ \begin{array}{l l}
\max\{d(i+1)+c_i-2|\ i = 0,...,n-1\}\! &\! \text{if}\ n \le m,\\
\max\{dn+c_m-2, d(i+1)+c_i-2|\ i = 0,...,m-1\}\! &\! \text{if}\ n > m,
\end{array} \right.$$
where $m$ is the minimum integer such that $c_n = c_m$ for all $n > m$.
Since  $c_{i-1}  \le c_i +d$, 
$$\begin{array}{l l}
\max\{d(i+1)+c_i-2|\ i = 0,...,n-1\} = dn + c_{n-1}-2 & \text{if } n \le m,\\
\max\{dn+c_m-2, d(i+1)+c_i-2|\ i = 0,...,m-1\} = dn+c_{n-1}-2 & \text{if } n > m.
\end{array}$$
Therefore, $\sdeg I^n = \reg Q^*/(Q,P^{dn})+1 = dn + c_{n-1} -1 = dn + b_n$ for all $n \ge 1$.
\end{proof}

Theorem \ref{sdeg2} can be also reformulated as follows.

\begin{Theorem}
The function $\sdeg I^n$ of an ideal $I$ generated by forms of degree $d$ with $\dim R/I \ge 1$ can be 
any numerical asymptotically linear function $f(n) \ge dn$ with slope $d$ that is weakly increasing. 
\end{Theorem}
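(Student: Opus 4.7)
The plan is to deduce this reformulation directly from Theorem \ref{sdeg2} by passing between the function $\sdeg I^n$ and its defect sequence $b_n = \sdeg I^n - dn$. The translation is purely arithmetic, so no new construction is needed; the content has already been established in Theorem \ref{sdeg2} via Proposition \ref{construction3}.

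First I would fix a numerical asymptotically linear function $f\colon \NN \to \ZZ$ of slope $d$ with $f(n) \ge dn$ that is weakly increasing, and set $b_n := f(n) - dn$ for all $n \ge 1$. Then I verify the three properties required by Theorem \ref{sdeg2}: (i) since $f(n) \ge dn$, we have $b_n \ge 0$; (ii) since $f$ is asymptotically linear with slope $d$, the sequence $\{b_n\}$ is eventually constant, hence convergent to a non-negative integer; (iii) the condition that $f$ is weakly increasing, $f(n+1) \ge f(n)$, translates to $d(n+1) + b_{n+1} \ge dn + b_n$, i.e. $b_n - b_{n+1} \le d$ for all $n \ge 1$. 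Applying Theorem \ref{sdeg2} yields an ideal $I$ generated by forms of degree $d$ in a standard graded algebra $R$ with $\dim R/I \ge 1$ whose defect sequence of $\sdeg I^n$ is exactly $\{b_n\}$; equivalently, $\sdeg I^n = dn + b_n = f(n)$ for all $n \ge 1$.

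Conversely, I would observe that any function of the form $\sdeg I^n$ for $I$ equigenerated with $\dim R/I \ge 1$ automatically satisfies all three properties of $f$: it is asymptotically linear with slope $d$ by Theorem \ref{sdeg} combined with Proposition \ref{intercept} (so either $\sdeg I^n$ is eventually constant with slope $0$, handled separately, or the slope is $d$ and $\sdeg I^n \ge dn$ asymptotically), and it is weakly increasing because the chain of inclusions $I^{n+1} \subseteq I^n$ forces $\widetilde{I^{n+1}} \subseteq \widetilde{I^n}$ and so any degree in which $\widetilde{I^n}$ strictly exceeds $I^n$ must be matched in the next step. This gives the ``only if'' direction of the reformulation once combined with the trivial bound $f(n) \ge dn$.

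There is no real obstacle here since the hard work lies in Theorem \ref{sdeg2} and hence in the monomial computation of Proposition \ref{construction3}; the only point that requires minor care is the boundary case where the target $f$ has slope $d$ but some initial values are unusually large compared to the asymptotic line, which is precisely why the condition $b_n - b_{n+1} \le d$, rather than any decreasing condition, is the correct one on the defect side.
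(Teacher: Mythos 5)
The translation in your first two paragraphs is exactly the paper's (implicit) argument: the theorem is stated only as a reformulation of Theorem \ref{sdeg2}, and the dictionary $b_n = f(n)-dn$ converts ``$f(n)\ge dn$, asymptotically linear with slope $d$, weakly increasing'' into ``non-negative, convergent, $b_n - b_{n+1}\le d$''. That part is complete and correct.

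Your third paragraph, however, asserts a converse that is false, and the paper says so explicitly right after the theorem: ``The converse of the above theorem does not hold. The function $\sdeg I^n$ may fluctuate,'' followed by a Macaulay2 example in which the sequence of values $\sdeg I^n$ drops from $19$ to $18$. Your monotonicity argument does not work: the inclusion $\widetilde{I^{n+1}} \subseteq \widetilde{I^n}$ gives no control on the top degree of $\widetilde{I^{n+1}}/I^{n+1}$ relative to that of $\widetilde{I^n}/I^n$, since an element of $\widetilde{I^n}\setminus I^n$ of maximal degree need not yield, after multiplication by $I$ or otherwise, an element of $\widetilde{I^{n+1}}\setminus I^{n+1}$ of at least that degree. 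The bound $\sdeg I^n \ge dn$ also fails in general: the paper's odd-cycle example has $d=2$ and $\sdeg I^n = 2 < 2n$ for $2\le n\le t$, and Proposition \ref{intercept} only guarantees $\sdeg I^n \ge dn$ for $n\gg 0$ under the hypothesis that $\sdeg I^n$ is not eventually constant. Since the theorem claims only the existence direction, deleting your last paragraph leaves a correct proof; as written, the proposal proves a strictly stronger (and false) ``if and only if'' statement.
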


The converse of the above theorem does not hold. The function $\sdeg I^n$ may fluctuate.

\begin{Example}
Let 
$$R = \QQ[x,y,u,v]/(x^6,x^3y^3,y^6,xu^5,yu^5,u^5v)$$ 
and $I = (xyv^4,u^6)R$. Then $(\sdeg I, \sdeg I^2, \sdeg I^4, \sdeg I^5) = (15,19,18,24,30)$.
This example was found by using Macaulay2 \cite{GS}.
\end{Example}

\medskip




\begin{thebibliography}{99}


\bibitem{BCP} D. Bayer, H. Charalambous and S. Popescu, {\it Extremal Betti numbers and applications to
monomial ideals},  J. Algebra {\bf 221} (1999), 497--512. 

\bibitem{BM} D. Bayer and D. Mumford, {\it What can be computed in algebraic geometry?}, Computational algebraic geometry and commutative algebra (Cortona, 1991), 1-48, Cambridge Univ. Press, 1993.

\bibitem{Be} D.   Berlekamp, {\it Regularity defect stabilization of powers of an ideal},  Math. Res. Lett. {\bf 19} (2012), 109 - 119.

\bibitem{BEL}  A. Bertram, L. Ein, and R. Lazarsfeld, {\it Vanishing theorems, a theorem of Severi, and the equations
defining projective varieties}, J. Amer. Math. Soc. {\bf 4} (1991), 587--602.

\bibitem{BHT} S. Bisui, Sankhaneel, H.T. H\`a  and A.C. Thomas, 
{\it Fiber invariants of projective morphisms and regularity of powers of ideals},
Acta Math. Vietnam. {\bf 45} (2020), 183--198.


\bibitem{Ch1} M. Chardin, 
{\it Powers of ideals and the cohomology of stalks and fibers of morphisms},
Algebra Number Theory {\bf 7} (2013), 1--18.

\bibitem{Ch2} M. Chardin, 
{\it Regularity stabilization for the powers of graded $\mm$-primary ideals},
Proc. Amer. Math. Soc. {\bf 143} (2015), 3343--3349.

\bibitem{Co} A. Conca, {\it Regularity jumps for powers of ideals}, Commutative Algebra with a focus on Geometric and Homological Aspects. Lecture Notes in Pure Applied Mathematics, {\bf 244}, 21--32. Chapman $\&$ Hall 2006.


\bibitem{CHT} D. Cutkosky, J. Herzog and N.V. Trung, {\it Asymptotic behavior of the Castelnuovo-Mumford regularity}, Compositio  Math. {\bf 118} (1999), 243 - 261.

\bibitem{EHL} L. Ein, H.T. H\`a and R. Lazarsfeld, 
{\it Saturation bounds for smooth varieties},
Algebra Number Theory {\bf 16} (2022), 1531--1546.

\bibitem{EG} D. Eisenbud and S. Goto, 
{\it Linear free resolutions and minimal multiplicity},
J. Algebra {\bf 88} (1984), 89--133.

\bibitem{EH} D. Eisenbud and J. Harris, {\it Powers of ideals and fibers of morphisms}, Math. Res. Lett. {\bf 17} (2010), 267 - 273.


\bibitem{EU} D. Eisenbud and B. Ulrich, {\it Notes on regularity  stabilization}, Proc.  Amer.  Math.  Soc. {\bf 140}  (2012),  1221--232.


\bibitem{GS} D. Grayson, M. Stillman,  Macaulay2, a software system for research in algebraic geometry,  available at http://www.math.uiuc.edu/Macaulay2/


\bibitem{Ha} H.T. H\`a, {\it Asymptotic linearity of regularity and $a^*$-invariant of powers of ideals},
Math. Res. Lett. {\bf 18} (2011), 1--9.
 
\bibitem{Ho1} L.T. Hoa, {\it Asymptotic behavior of integer programming and the stability of the Castelnuovo-Mumford regularity}, Math. Program. {\bf 193} (2022), Ser. A, 157--194.

\bibitem{Ho2} L.T. Hoa, {\it Maximal generating degrees of powers of homogeneous ideals}, Acta Math. Vietnam. {\bf 47} (2022), 19--37. 
 
 \bibitem{HT} L.T. Hoa and N.D. Tam, {\it On some invariants of  a mixed product of ideals}, Arch. Math. {\bf 94} (2010), 327--337.
 
 \bibitem{Ko} V. Kodiyalam, {\it    Asymptotic behaviour of Castelnuovo-Mumford regularity},  Proc. Amer. Math. Soc. {\bf  128} (2000),  407--411.
 
\bibitem{LT} H.M. Lam and N.V. Trung,
{\it Associated primes of powers of edge ideals and ear decompositions of graphs},
Trans. Amer. Math. Soc. {\bf 372} (2019), 3211--3236.
 
\bibitem{NV}  H.D. Nguyen,  and T. Vu, {\it Homological Invariants of Powers of Fiber Products}, Acta Mathematica Vietnamica {\bf 44} (2019), 617--638.
 
 \bibitem{Ro} T. R\"omer, {\it Homological properties of bigraded algebras}, Illinois J. Math. {\bf 45} (2001),
1361--1376. 
 
\bibitem{St} B. Sturmfels, {\it Four counterexamples in combinatorial algebraic geometry},  J. Algebra {\bf 230} (2000), 282--294.

\bibitem{Tr1} N.V. Trung, {\it Reduction exponent and degree bound for the defining equations of graded rings}, Proc. Amer. Math. Soc. {\bf 101} (1987), no. 2, 229–236.

\bibitem{Tr2} N.V. Trung, {\it Gr\"obner bases, local cohomology and reduction number}, Proc. Amer. Math. Soc. {\bf 129} (2001), no. 1, 9--18.

\bibitem{Tr3} N.V. Trung, {\it Evaluations of initial ideals and Castelnuovo-Mumford regularity}, Proc. Amer. Math. Soc. {\bf 130} (2002), no. 5, 1265--1274.

\bibitem{TW}  N.V. Trung and H-J. Wang, {\it On the asymptotic linearity of Castelnuovo-Mumford regularity},  J. Pure Appl. Alg. {\bf 201} (2005), 42--48.
 
\end{thebibliography}
\end{document}